\documentclass[10pt,journal,twoside]{IEEEtran}

\usepackage{amsmath,amssymb,amsthm,mathtools,bm,mathrsfs}
\usepackage{graphicx}
\usepackage{xcolor}
\usepackage{booktabs}
\usepackage{enumitem}
\usepackage{cite}
\usepackage[hidelinks]{hyperref}
\usepackage{microtype}
\allowdisplaybreaks
\usepackage[caption=false,font=footnotesize]{subfig}

\newtheorem{theorem}{Theorem}
\newtheorem{proposition}{Proposition}
\newtheorem{lemma}{Lemma}
\newtheorem{corollary}{Corollary}
\theoremstyle{definition}
\newtheorem{assumption}{Assumption}

\theoremstyle{remark}
\newtheorem{remark}{Remark}

\newcommand{\R}{\mathbb{R}}

\newcommand{\diag}{\operatorname{diag}}

\newcommand{\dd}{\mathrm{d}}

\title{Mean--Covariance Turnpikes in Wasserstein Distributionally Robust Linear-Quadratic Control}
\author{Yanzhi~Wu, Zhengping~Ji%
\thanks{This work is supported partly by ERC-2022-ADG-101096251-CoDeFeL grant and by the Alexander von Humboldt Foundation through a Humboldt Research Fellowship. \textit{(Corresponding author: Zhengping Ji.)}}
\thanks{Yanzhi Wu is with the School of Electrical Engineering, Southwest Jiaotong University, Chengdu, China, and with the Chair for Dynamics, Control, Machine Learning \& Numerics (Alexander von Humboldt Professorship), Department of Mathematics, Friedrich-Alexander-Universit\"at Erlangen-N\"urnberg, 91058 Erlangen, Germany (email: wyzcontrolmath@139.com).}%
\thanks{Zhengping Ji is with the Chair for Dynamics, Control, Machine Learning \& Numerics (Alexander von Humboldt Professorship), Department of Mathematics, Friedrich-Alexander-Universit\"at Erlangen-N\"urnberg, 91058 Erlangen, Germany (e-mail: zhengping.ji@fau.de).}}
\begin{document}
\maketitle

\begin{abstract}
We study long-horizon Wasserstein-penalized minimax control for discrete-time stochastic linear systems with empirical disturbance data, in which adversarial disturbance distributions induce time-varying mean and covariance dynamics, making standard turnpike arguments not directly applicable. For possibly uncentered data, we characterize the generally nonzero mean reference through a reduced convex-concave Hamiltonian saddle problem. We prove horizon-uniform, two-sided exponential turnpike estimates for the mean state, adjoint, control, worst-case disturbance mean, and closed-loop covariance, showing that they spend the majority of time near static references when the horizon is long. We further construct a hybrid policy combining time-independent affine feedback with finite-horizon steering over a terminal layer, proving that its worst-case cost gap decays exponentially with the terminal-layer length uniformly in the horizon, which helps reducing the computation cost for long-horizon robust controls. Numerical examples illustrate the estimates and their
dependence on the Wasserstein penalty.
\end{abstract}
\begin{IEEEkeywords}
	Wasserstein distributional robustness, turnpike property, minimax linear-quadratic control, empirical disturbances, stabilization.
\end{IEEEkeywords}

\section{Introduction}

\subsection{Background}

Stochastic optimal control traditionally relies on prescribed probabilistic information about system disturbances. In classical linear-quadratic control, this takes the form of a nominal distribution used to minimize an expected cost. However, this assumption is restrictive in modern data-driven settings. In practice, disturbance distributions are estimated from finite empirical samples, meaning their quality is fundamentally limited by data volume, sensor fidelity, and operating conditions. If the assumed nominal distribution deviates from reality, the resulting controller may suffer severe performance degradation and yield unsafe or undesirable closed-loop behavior
\cite{Yang2021TAC,HakobyanYang2024}.

Distributionally robust control provides a powerful minimax approach to overcome this difficulty.
Instead of optimizing with respect to a single nominal distribution, the
controller minimizes the cost against a worst-case disturbance distribution in an
ambiguity set.  This viewpoint bridges stochastic and robust control by using
available statistical information while hedging against errors in the empirical distribution used for controller design.  Ambiguity sets have been constructed using relative entropy, moment constraints, total variation distance, and the Wasserstein metric
\cite{PetersenJamesDupuis2000,UgrinovskiiPetersen2002,
DelageYe2010,WiesemannKuhnSim2014,VanParysKuhnGoulartMorari2016,
CoulsonLygerosDorfler2022,Yang2021TAC}.  Among
these choices, the Wasserstein metric is especially suitable when the nominal
distribution is empirical.  
Because it quantifies the optimal transport cost between distributions, a Wasserstein ambiguity set can be centered directly on discrete empirical data, enabling highly effective data-driven linear-quadratic solutions
\cite{MohajerinEsfahaniKuhn2018,GaoKleywegt2023,KuhnShafieeWiesemann2025,
Yang2021TAC,KimYang2023,HakobyanYang2024}.

While Wasserstein-penalized robust control provides strong performance guarantees, solving these minimax linear-quadratic problems over long finite horizons introduces significant computational burdens and raises complementary questions about the structure of optimal trajectories. In classical optimal control, the turnpike property describes the phenomenon that long-horizon optimal trajectories spend the vast majority of time near a time-independent steady-state solution, deviating only during brief initial and terminal boundary layers
\cite{Grune2013,TrelatZuazua2015,TrelatZhangZuazua2018}. To establish this property for Wasserstein-penalized robust
control is of both theoretical and practical importance, as it will explain the long-horizon behavior of optimal trajectories and controls, and solve the computational bottleneck of long horizons by reducing the dynamic mean variables and covariance evolution to static optimizations, and providing a basis for approximating these variables by their time-independent references over the interior of a long horizon.

\subsection{Related work}

The literature on Wasserstein distributionally robust control has mainly addressed policy construction, tractable reformulations, performance guarantees, and stability. For linear systems, \cite{Yang2021TAC} developed a dynamic-programming framework for Wasserstein-penalized linear-quadratic problems, and \cite{KimYang2020,KimYang2023} derived Riccati recursions and optimal policies for such problems. \cite{HakobyanYang2024} extended this to partially observable systems. Recent studies have considered constrained and infinite-horizon control with Wasserstein ambiguity sets
\cite{BrouillonMartinLygerosDorflerFerrariTrecate2025}, optimal-transport-based uncertainty propagation
\cite{AolariteiLanzettiChenDorfler2025}, and distributionally robust regret-optimal control \cite{KarginHajarMalikHassibi2024}.
Distributionally robust linear-quadratic and linear-quadratic-Gaussian formulations have also been studied in \cite{TaskesenIancuKocyigitKuhn2023,
SchobiLanzettiDorflerDAndreaTerpin2026}. Related work on distributionally robust model predictive control has addressed output feedback, closed-loop guarantees, and scalable algorithms
\cite{LiGuanDaiDuan2024,McAllisterEsfahani2025}.
However, to the best of our knowledge, the existing literature does not answer whether the long finite-horizon robust closed loop exhibits a horizon-uniform turnpike, especially whether the adversarially induced second-order dynamics have a covariance turnpike, and did not study the dependence of solutions on horizon length, which, as we will show in this paper, can help reducing the computational complexity of distributionally robust control.

In the analysis of long-horizon behavior of optimal control problems, turnpike theory has mainly been developed for deterministic or standard linear
stochastic optimal control problems, using Hamiltonian dynamics, dissipativity or matrix inequalities to derive horizon-independent estimates
\cite{DammGruneStielerWorthmann2014,
GruneGuglielmi2018,BerberichEtAl2018,SchiesslBaumannFaulwasserGrune2025}.
Stochastic turnpike properties have also been extended to probability measures, mean-field couplings, zero-sum differential games, and partially observed systems \cite{SunWangYong2022,SunYong2024,SunYong2025Game,HuWu2026,Schie2025}. 
However, extending these ideas to the present distributionally robust setting faces two structural difficulties. First, unlike the standard scheme where the disturbance distribution is prescribed, the stagewise selection of a worst-case disturbance distribution in the data-driven setting generates a time-varying closed-loop covariance recursion whose coefficients depend on the finite-horizon Riccati sequence. This dynamic complexity is not captured by deterministic turnpike theory or by an analysis of the mean dynamics alone. Second, in practical data-driven applications, the empirical disturbance mean need not vanish. Since artificially recentering the samples changes the empirical reference distribution entering the Wasserstein penalty, the origin cannot in general serve as the stationary mean reference. The appropriate nonzero reference for the state, adjoint, control, and worst-case disturbance mean must therefore be characterized consistently with the original empirical data.

\subsection{Contributions}


Our main contributions are summarized as follows.

\begin{itemize}
	
	\item {\it Static mean reference for uncentered data.}
    We show that a nonzero empirical disturbance mean generates affine terms in
	the value functions and feedforward terms in the optimal policies, allowing us to characterize the static mean reference as the unique saddle point of a reduced convex--concave Hamiltonian problem. This formulation avoids artificial recentering of the empirical distribution and identifies the stationary reference around which the finite-horizon mean variables exhibit their turnpike behavior.

  
    
	
	\item {\it Two-sided mean turnpike estimates.} 
    By establishing a uniform product estimate for the time-varying closed-loop matrices, we derive two-sided exponential estimates, uniform with respect to the horizon, for the mean state, mean adjoint, mean control input, and mean of the worst-case disturbance distribution. Thus, outside horizon-independent initial and terminal boundary layers, the finite-horizon mean variables remain exponentially close to the static mean reference.
    
    
	
	\item {\it Covariance turnpike induced by the worst-case distribution policy}.
     We derive an exact closed-loop covariance recursion whose coefficients depend on the finite-horizon Riccati sequence and analyze it relative to a limiting Lyapunov equation. This yields a two-sided exponential covariance turnpike estimate, uniform with respect to the horizon. Together with the mean estimates, it yields corresponding bounds for expected quadratic functions and a 2-Wasserstein estimate for Gaussian moment surrogates.

     \item {\it Hybrid quasi-turnpike policy with a performance guarantee}. Motivated by the turnpike estimates, we construct a hybrid control policy that uses the time-independent affine feedback over the first $N-q$ stages and retains the finite-horizon optimal coefficients over the final $q$ stages. We prove that, for sufficiently large $q$, its worst-case finite-horizon cost gap decays exponentially with $q$, independent of the horizon $N$. Hence, a fixed terminal-layer length suffices to achieve a prescribed cost accuracy uniformly as the horizon grows.
     
	
\end{itemize}

\subsection{Organization of the paper}

The rest of the paper is organized as follows. Section \ref{sec:problem} formulates the finite-horizon minimax problem and describes our main result.
Section \ref{sec:bellman-riccati} derives the affine-quadratic expression of the value functions and
studies the associated Riccati equation. Section \ref{sec:static} introduces the static mean reference and the finite-horizon mean-state and
adjoint equations. Section \ref{sec:turnpike} establishes the mean and covariance turnpike estimates, their consequences for quadratic functions and Gaussian moment surrogates, and a hybrid quasi-turnpike policy with an exponential finite-horizon cost estimate, which are illustrated by numerical experiments in Section \ref{sec:numerical-experiments}.
Section \ref{sec:conclusion} concludes the paper.

\section{Problem formulation and main result}
\label{sec:problem}

\subsection{System model}
\label{subsec:finite-horizon-problem}

Consider the discrete-time stochastic linear system
\begin{equation}
	x_{t+1}=Ax_t+Bu_t+\Xi w_t,
	\qquad t\ge 0,
	\label{eq:sys}
\end{equation}
where \(x_t\in\mathbb R^n\) is the state, \(u_t\in\mathbb R^m\) is the control input, and \(w_t\in\mathbb R^k\) is a random disturbance vector.  The system matrices are
\(
A\in\mathbb R^{n\times n},
B\in\mathbb R^{n\times m},
\Xi\in\mathbb R^{n\times k}.
\)

The probability distribution of \(w_t\) is unknown.  Instead, at each stage \(t\), a finite disturbance dataset
\(
\widehat w_t^{(1)},\ldots,\widehat w_t^{(M)}\in\mathbb R^k
\)
is available.  Here \(\widehat w_t^{(i)}\) denotes the \(i\)-th empirical disturbance sample at stage \(t\).  Let \(\delta_a\) denote the Dirac probability measure concentrated at \(a\).  The corresponding empirical distribution is
\begin{equation}
	\nu_t:=\frac1M\sum_{i=1}^M\delta_{\widehat w_t^{(i)}} .
	\label{eq:empirical-law}
\end{equation}
 Its empirical mean and covariance are
\begin{equation}
	\begin{aligned}
		d_t
		&:=
		\frac1M\sum_{i=1}^M\widehat w_t^{(i)},\\
		\Sigma_t
		&:=
		\frac1M\sum_{i=1}^M
		(\widehat w_t^{(i)}-d_t)
		(\widehat w_t^{(i)}-d_t)^\top .
	\end{aligned}
	\label{eq:mean-cov}
\end{equation}

The empirical disturbance samples are retained in their original coordinates, hence,  \(d_t\) is not necessarily zero.

\begin{assumption}
	\label{ass:stationary-empirical}
	The empirical mean and covariance in \eqref{eq:mean-cov} are independent of \(t\), i.e.,
	 $d_t\equiv d,
		\Sigma_t\equiv\Sigma$. The initial state $x_0$ in \eqref{eq:sys} has distribution \(\mu_0\in\mathcal P_2(\mathbb R^n)\), with mean and
	covariance
	\begin{equation}
		\begin{aligned}
			m_0:=\!\!\int \!x\,\dd\mu_0(x),
			~X_0\!:=\!\!\int \!(x-m_0)(x-m_0)^\top\dd\mu_0(x).
		\end{aligned}
		\label{eq:initial-ensemble-moments}
	\end{equation}
	The deterministic initial state is recovered by taking
	\(\mu_0=\delta_{x_0}\).
\end{assumption}

\begingroup
\begin{remark}
\begin{enumerate}
    \item An analogous stationary condition on the nominal disturbance mean and
	covariance is imposed in \cite{HakobyanYang2024}. Assumption~\ref{ass:stationary-empirical}
	requires only the empirical mean and covariance to be independent of the
	stage; the empirical support points $\widehat w_t^{(i)}$ may still vary with the stage.
    \item A random initial state specified by its mean and covariance is also
	considered in \cite{HakobyanYang2024}. The finite-second-moment condition in
	Assumption~\ref{ass:stationary-empirical} is used here to define the initial
	covariance and to support the covariance analysis.
\end{enumerate}
\end{remark}
\endgroup

\subsection{Wasserstein-penalized minimax formulation}
\label{subsec:minimax-formulation}

Next, we define the cost or objective function that we would like to optimize with respect to the dynamics \eqref{eq:sys}. 

Let \(\mathcal P_2(\mathbb R^k)\) denote the set of Borel probability
measures on \(\mathbb R^k\) with finite second moment.  For
\(\mu,\nu\in\mathcal P_2(\mathbb R^k)\), the squared 2-Wasserstein distance measuring their difference is
\begin{equation}
	W_2^2(\mu,\nu)
	:=
	\inf_{\kappa\in\mathcal C(\mu,\nu)}
	\int_{\mathbb R^k\times\mathbb R^k}
	\|w-\widetilde w\|^2
	\,\dd\kappa(w,\widetilde w),
	\label{eq:W2-def}
\end{equation}
where \(\mathcal C(\mu,\nu)\) is the set of couplings whose first and second
marginals are \(\mu\) and \(\nu\), respectively.

Fix time horizon \(N\in\mathbb N\) and consider the finite-horizon stages \(0,\ldots,N-1\). For any index set $I\subseteq\{0,\ldots,N-1\}$, define the state-feedback policy class
\begin{equation*}
	\begin{aligned}
		\Pi_I
		:=
		\{(\pi_r)_{r\in I}:\,
		\pi_r:\mathbb R^n\to\mathbb R^m 
		~\text{is Borel measurable}\},
	\end{aligned}
\end{equation*}
and  the disturbance distribution policy class
\begin{equation*}
	\begin{aligned}
		\Gamma_I:=\Bigl\{
		(\gamma_r)_{r\in I}:&\,
		\gamma_r:\mathbb R^n\times\mathbb R^m
		\to\mathcal P_2(\mathbb R^k),\\
		&\ \gamma_r \text{ is Borel measurable for all } r\in I
		\Bigr\}.
	\end{aligned}
\end{equation*}
 Under
\((\pi,\gamma)\in\Pi_I\times\Gamma_I\), the control input is
\(u_r=\pi_r(x_r)\), the conditional disturbance distribution is
\(\gamma_r(x_r,u_r)\),
and the state evolves according to \eqref{eq:sys}.

For the current state \(x_t=x\) at stage \(t\), define the finite-horizon
Wasserstein-penalized cost with respect to \eqref{eq:sys} as
\begin{equation}
	\begin{aligned}
		J_{x,t}(\pi,\gamma)
		&:=
		\mathbb E^{\pi,\gamma}\Bigg[
		x_N^\top Q_fx_N + \sum_{r=t}^{N-1}
		\Big(
		x_r^\top Qx_r+u_r^\top Ru_r \\
		&
		-\lambda W_2^2(\gamma_r(x_r,u_r),\nu_r)
		\Big) 
		\,\Big|\,x_t=x
		\Bigg],
	\end{aligned}
	\label{eq:truncated-cost}
\end{equation}
where
$Q=Q^\top\succeq0$,
$R=R^\top\succ0$,
$Q_f=Q_f^\top\succeq0$,
and \(\lambda>0\) is the Wasserstein penalty weight. The term \(-\lambda W_2^2(\gamma_r(x_r,u_r),\nu_r)\) penalizes the deviation of the selected disturbance distribution from the empirical distribution $\nu_r$ defined in \eqref{eq:empirical-law}.

Intuitively speaking, to achieve robust control against unknown disturbances, the goal of the controller  $u_r=\pi_r(x_r)$ is to minimize the quadratic cost in \eqref{eq:truncated-cost} over time $N$ in the presence of a worst-case disturbance $\gamma_r(x_r,u_r)$ corresponding to the sampled noisy data.

We define the finite-horizon value function as follows:
\begin{equation}
	V_t(x_t)
	:=
	\inf_{\pi\in\Pi_{t:N-1}}
	\sup_{\gamma\in\Gamma_{t:N-1}}
	J_{x,t}(\pi,\gamma),
	\label{eq:value-definition}
\end{equation}
where
\(\Pi_{t:N-1}:=\Pi_{\{t,\ldots,N-1\}}\) and
\(\Gamma_{t:N-1}:=\Gamma_{\{t,\ldots,N-1\}}\). The terminal value of the value function is
\(
V_N(x_N):=x_N^\top Q_fx_N .
\)
The optimizations in \eqref{eq:value-definition} are over admissible
{control policies $\pi$ and disturbance distribution policies $\gamma$} for which the relevant second moments are finite and
\eqref{eq:truncated-cost} is well defined.

The principal objective of this paper is to characterize the long-horizon mean and covariance behavior of the system \eqref{eq:sys} under the optimal control and disturbance distribution policies associated with the value function \eqref{eq:value-definition}.


\subsection{Main result in brief}

For a horizon \(N\), let \(m_t^*\), \(p_t^*\),
\(\bar{u}_t^*\), \(\bar{w}_t^*\), and \(X_t^*\)
denote, respectively, the optimal mean state and adjoint, the optimal mean
control input, the mean of the worst-case disturbance distribution,
and the optimal closed-loop state covariance. Let \((m^e,p^e,u^e,\bar w^e)\) be the static mean reference defined
by the reduced static Hamiltonian saddle problem in
Proposition~\ref{prop:static-equilibrium}, and let \(X^e\) be the reference covariance defined by
the limiting Lyapunov equation.

Under Assumptions \ref{ass:stationary-empirical},
\ref{ass:psd-W}, and \ref{ass:riccati-penalty}, there exist constants \(C>0\) and
\(\rho\in(0,1)\), independent of \(N\) and \(t\), such that, for
\(t=0,\ldots,N-1\),
\begin{align}
&\|m_t^*-m^e\|
 +\|p_t^*-p^e\|
 +\|\bar{u}_t^*-u^e\|
 \notag\\
&\quad
 +\|\bar{w}_t^*-\bar w^e\|
 +\|X_t^*-X^e\|
 \notag\\
\le&~
 C\rho^t
 \left(
   \|m_0-m^e\|+\|X_0-X^e\|
 \right)
 \notag\\
&\quad+
 C\rho^{N-t-1}
 \left(
   1+\|Q_fm^e-p^e\|
 \right).
\label{eq:main-result-in-brief}
\end{align}
Here \(m_0\) and \(X_0\) are the initial mean and covariance, and
\(Q_f\) is the terminal-state cost matrix. Estimate
\eqref{eq:main-result-in-brief} combines the mean and covariance turnpike
bounds established in Theorems~\ref{thm:affine-mean-turnpike} and
\ref{thm:cov-turnpike}, respectively, and displays the corresponding
initial and terminal boundary terms explicitly.

\section{Value function and Riccati equation}
\label{sec:bellman-riccati}

This section analyzes the value function of the Wasserstein-penalized minimax problem \eqref{eq:sys}\eqref{eq:value-definition} and derives the properties of the Riccati equation, which characterizes the optimizers of the minimax problem and will be essential for turnpike estimates.
For the rest of the paper, the horizon \(N\) is fixed. To avoid overloading the notation, we suppress the dependence of the finite-horizon coefficients and policies on \(N\).

\subsection{Optimal solutions from quadratic value function}
\label{subsec:one-step-bellman}

By Bellman's dynamic programming principle, the value function defined in \eqref{eq:value-definition} satisfies the following recursive equations:
\begin{equation}
	\begin{aligned}
		&V_N(x_N)
		=x_N^\top Q_fx_N,\\
		&V_t(x_t)
		=
		x_t^\top Qx_t+
		\inf_{u_t\in\R^m} 
		\Bigg\{
		u_t^\top Ru_t                                      \\
		&\qquad+
		\sup_{\mu\in\mathcal P_2(\R^k)}
		\bigg[-\lambda W_2^2(\mu,\nu_t)\\
		&\qquad+\int V_{t+1}(Ax_t+Bu_t+\Xi w_t)\,\dd\mu(w_t)\bigg]
		\Bigg\},
	\end{aligned}
	\label{eq:Bellman-DP}
\end{equation}
for \(t=N-1,\ldots,0\).  

Since \(\nu_t\) has finite support, Kantorovich duality gives
\begin{equation}
	\begin{aligned}
		V_t(x_t)
		=&
		x_t^\top Qx_t+
		\inf_{u_t\in\R^m}
		\Bigg[
		u_t^\top Ru_t                                    \\
		&+
		\frac1M\sum_{i=1}^M
		\sup_{w_t\in\R^k}
		\Big\{
		V_{t+1}(Ax_t+Bu_t+\Xi w_t)\\
        & -\lambda\|w_t-\widehat  w_t^{(i)}\|^2 \Big\}
		\Bigg],
	\end{aligned}
	\label{eq:Bellman-finite-support}
\end{equation}
whenever the right-hand side is finite (see, e.g.,
\cite{Villani2009,BlanchetMurthy2019,MohajerinEsfahaniKuhn2018,GaoKleywegt2023}).

Define a matrix
\begin{equation}
	W:=BR^{-1}B^\top-\lambda^{-1}\Xi\Xi^\top .
	\label{eq:W-def-bellman}
\end{equation}

We impose the following stabilizability and observability conditions to establish Riccati convergence.

\begin{assumption}
	\label{ass:psd-W}
	$W\succeq0$, the pair \((A,W^{1/2})\) is stabilizable, and $(A,Q^{1/2})$ is observable.
\end{assumption}

\begin{remark}
    The condition \(W\succeq0\) is equivalent to $BR^{-1}B^\top\succeq \lambda^{-1}\Xi\Xi^\top$. The same matrix appeared in the Riccati analysis of \cite{KimYang2023} and was denoted by \(\Phi\). This condition imposes a structural restriction on \(B\) and \(\Xi\). In particular, it implies \(\operatorname{range}(\Xi)\subseteq \operatorname{range}(B).\) Increasing \(\lambda\) weakens this restriction.
\end{remark}


The following lemma gives the solution of the outer minimization problem and the corresponding support points of a worst-case distribution, when the value function is of quadratic form.

\begin{lemma}
	\label{lem:one-step-affine-minimax}
Fix \(t\in\{0,\ldots,N-1\}\), and suppose that at stage $t+1$, the value function \eqref{eq:value-definition} has the form
	\begin{equation}
		V_{t+1}(x_{t+1})
		=
		x_{t+1}^\top P_{t+1}x_{t+1}
		+2s_{t+1}^\top x_{t+1}
		+z_{t+1},
		\label{eq:one-step-continuation}
	\end{equation}
	where \(P_{t+1}=P_{t+1}^\top\in \R^{n\times n}\succeq0\), $s_{t+1}\in\R^n$, $z_{t+1}\in\R$.  Suppose that
	\begin{equation}
		\lambda I-\Xi^\top P_{t+1}\Xi\succ0.
		\label{eq:one-step-admissibility}
	\end{equation}
	Then the following statements hold.
	
	\begin{enumerate}
		\item The outer minimization problem in
		\eqref{eq:Bellman-finite-support} with respect to \(u\) has a
		unique solution
		\begin{equation}
			u_t^*=K_tx_t+k_t,
			\label{eq:affine-control}
		\end{equation}
		where
		\begin{align}
			K_t
			&=
			-R^{-1}B^\top
			(I+P_{t+1}W)^{-1}
			P_{t+1}A,
			\label{eq:K-affine}\\
			k_t
			&=
			-R^{-1}\!B^\top\!
			(I+P_{t+1}W)^{-1}\!
			\big(P_{t+1}\Xi d_t+s_{t+1}\big)
			\label{eq:k-affine}
		\end{align}
        with $W$ defined in \eqref{eq:W-def-bellman}.
	\item Given the control input \eqref{eq:affine-control}, the support
		point associated with the empirical sample \(\widehat w_t^{(i)}\)
		is unique and has the affine form
		\begin{equation}
			w_t^{*,i}(x)=H_tx+G_t^i,
			\qquad i=1,\ldots,M,
			\label{eq:wstar-affine-state}
		\end{equation}
		where
        \begin{equation}
		\begin{aligned}
			H_t
			&=
			(\lambda I-\Xi^\top P_{t+1}\Xi)^{-1}
			\Xi^\top P_{t+1}(A+BK_t),
			\\
			G_t^i
			&=\!
			(\lambda I\!-\!\Xi^\top\! P_{t+1}\Xi)^{-1}\!
			\Big[
			\lambda\widehat w_t^{(i)}\!+\Xi^\top\!\big(P_{t+1}Bk_t\!+\!s_{t+1}\big)
			\Big].
			\label{eq:G-affine}
		\end{aligned}
        \end{equation}
	\end{enumerate}
\end{lemma}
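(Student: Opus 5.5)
We solve the inner problem first, then the outer one. Throughout we write $y:=Ax_t+Bu$ for the deterministic part of the next state and $S:=\lambda I-\Xi^\top P_{t+1}\Xi\succ0$, which is guaranteed by \eqref{eq:one-step-admissibility}.

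\textbf{Inner suprema.} Substituting \eqref{eq:one-step-continuation} into the inner problem of \eqref{eq:Bellman-finite-support}, the objective for each sample $i$ is
\[
w\mapsto (y+\Xi w)^\top P_{t+1}(y+\Xi w)+2s_{t+1}^\top(y+\Xi w)+z_{t+1}-\lambda\|w-\widehat w_t^{(i)}\|^2 .
\]
Its Hessian in $w$ is $-2S\prec0$, so it is strictly concave with a unique maximizer. The first-order condition $Sw=\Xi^\top(P_{t+1}y+s_{t+1})+\lambda\widehat w_t^{(i)}$ gives
\[
w^{*,i}=S^{-1}\bigl[\Xi^\top(P_{t+1}y+s_{t+1})+\lambda\widehat w_t^{(i)}\bigr].
\]
Putting in $y=(A+BK_t)x+Bk_t$ then gives exactly $H_tx+G_t^i$ as in \eqref{eq:G-affine}. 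This proves part 2 once part 1 is established.

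\textbf{Value of the inner problem.} Next we plug the maximizer back in. The maximal value is a quadratic function of $y$ plus terms in $\widehat w_t^{(i)}$. Averaging over $i$ and using the definitions \eqref{eq:mean-cov}, the resulting outer objective is
\[
u^\top Ru+y^\top\widetilde P y+2y^\top\bigl(\text{affine in } d_t, s_{t+1}\bigr)+\text{const},
\]
where
\[
\widetilde P=P_{t+1}+P_{t+1}\Xi S^{-1}\Xi^\top P_{t+1}.
\]
By the Woodbury identity, $\widetilde P=(P_{t+1}^{-1}-\lambda^{-1}\Xi\Xi^\top)^{-1}$ whenever $P_{t+1}$ is invertible. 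The convenient form that avoids invertibility is
\[
\widetilde P=(I-\lambda^{-1}P_{t+1}\Xi\Xi^\top)^{-1}P_{t+1}.
\]
We will verify this push-through identity directly. Invertibility of $I-\lambda^{-1}P_{t+1}\Xi\Xi^\top$ follows because its nonzero spectrum coincides, via Sylvester's determinant identity, with that of $I-\lambda^{-1}\Xi^\top P_{t+1}\Xi\succ0$. The linear coefficient simplifies in the same way to $(I-\lambda^{-1}P_{t+1}\Xi\Xi^\top)^{-1}(P_{t+1}\Xi d_t+s_{t+1})$. The key observation is that the sample points enter the linear term only through their mean $d_t$.

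\textbf{Outer minimization.} The outer objective is strictly convex in $u$, since its Hessian is $2(R+B^\top\widetilde PB)$ and $\widetilde P\succeq0$: it is the maximum of convex functions in $y$, or equivalently is PSD by the formula above with $P_{t+1}\succeq0$. So the unique minimizer solves
\[
(R+B^\top\widetilde PB)u=-B^\top\bigl[\widetilde PAx+(I-\lambda^{-1}P_{t+1}\Xi\Xi^\top)^{-1}(P_{t+1}\Xi d_t+s_{t+1})\bigr].
\]
The main algebraic step is to rewrite this in the stated form. Set $T:=I-\lambda^{-1}P_{t+1}\Xi\Xi^\top$, so $\widetilde P=T^{-1}P_{t+1}$. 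Then use the push-through identity
\[
(R+B^\top T^{-1}P_{t+1}B)^{-1}B^\top T^{-1}=R^{-1}B^\top(T+P_{t+1}BR^{-1}B^\top)^{-1},
\]
which is verified by multiplying out. Since $T+P_{t+1}BR^{-1}B^\top=I+P_{t+1}W$, this gives
\[
u^*=-R^{-1}B^\top(I+P_{t+1}W)^{-1}\bigl(P_{t+1}Ax+P_{t+1}\Xi d_t+s_{t+1}\bigr),
\]
which is \eqref{eq:K-affine}–\eqref{eq:k-affine}.

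\textbf{Main obstacle.} The hard part is justifying invertibility of $I+P_{t+1}W$ without assuming $P_{t+1}$ invertible. We will get it from the factorization $I+P_{t+1}W=T\bigl(I+T^{-1}P_{t+1}BR^{-1}B^\top\bigr)$. The second factor is invertible because $T^{-1}P_{t+1}=\widetilde P\succeq0$ is symmetric, so its eigenvalues match those of $I+R^{-1/2}B^\top\widetilde PBR^{-1/2}\succ0$. Alternatively, Assumption~\ref{ass:psd-W} with $W\succeq0$ gives the same conclusion directly, since $P_{t+1}W$ then has nonnegative spectrum.
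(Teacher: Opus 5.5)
Your proof is correct, but it takes a different route from the paper.

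\textbf{The paper's route.} The paper does not write out the inner maximal value. It differentiates the outer objective using the $w$-first-order condition (an envelope argument), and it averages the explicit maximizer over $i$ so that only $d_t$ survives. It then eliminates $u_t$ and the averaged support point $\bar w$ through the single vector $g=P_{t+1}(Ax_t+Bu_t)+s_{t+1}+P_{t+1}\Xi\bar w$. Because $Bu_t=-BR^{-1}B^\top g$ and $\Xi\bar w=\Xi d_t+\lambda^{-1}\Xi\Xi^\top g$, one gets $(I+P_{t+1}W)g=P_{t+1}Ax_t+P_{t+1}\Xi d_t+s_{t+1}$ in one line, so $W$ appears at once. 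Invertibility comes from $\det(I+P_{t+1}W)=\det(I+P_{t+1}^{1/2}WP_{t+1}^{1/2})$ together with $I+P_{t+1}^{1/2}WP_{t+1}^{1/2}=(I-\lambda^{-1}P_{t+1}^{1/2}\Xi\Xi^\top P_{t+1}^{1/2})+P_{t+1}^{1/2}BR^{-1}B^\top P_{t+1}^{1/2}\succ0$.

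\textbf{Your route.} You instead eliminate $w$ completely. This produces the robustified weight $\widetilde P=T^{-1}P_{t+1}$ and the linear term $T^{-1}(P_{t+1}\Xi d_t+s_{t+1})$, which reduces the step to an ordinary LQ minimization. You recover $W$ only at the end, via the push-through identity and the factorization $I+P_{t+1}W=T(I+\widetilde PBR^{-1}B^\top)$. The identities $T^{-1}=I+P_{t+1}\Xi S^{-1}\Xi^\top$ and $T^{-1}P_{t+1}\Xi=\lambda P_{t+1}\Xi S^{-1}$, and your push-through formula, all check out.

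\textbf{Comparison.} The paper's argument is shorter and needs fewer matrix identities. Yours costs more algebra but gives the inner value in closed form. That makes the $P_t$ recursion of Theorem~\ref{thm:affine-bellman-reduction} immediate, as well as the Hessian $\mathcal R_t=R+B^\top\widetilde PB$ used in Proposition~\ref{prp:quasi-turnpike-cost}. It also exposes the familiar $(P_{t+1}^{-1}-\lambda^{-1}\Xi\Xi^\top)^{-1}$ structure.

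\textbf{Two minor points.} First, your fallback via $W\succeq0$ invokes Assumption~\ref{ass:psd-W}, which is not a hypothesis of the lemma. Your primary factorization argument does not need it, so nothing is lost. Second, the phrases ``nonzero spectrum coincides'' and ``eigenvalues match'' compare matrices of different sizes. They are best stated as the determinant identity $\det(I+XY)=\det(I+YX)$.
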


\begin{proof}
	The detailed proof is given in
	Appendix~\ref{app:one-step-affine-minimax}.
\end{proof}

\subsection{Finite-horizon Riccati recursion}
\label{subsec:finite-horizon-synthesis}

The one-step calculation in Lemma \ref{lem:one-step-affine-minimax} leads to the finite-horizon value representation by backward induction. The following theorem confirms that the value function for \eqref{eq:sys} and \eqref{eq:truncated-cost} indeed has the form assumed in Lemma \ref{lem:one-step-affine-minimax}. 

\begin{theorem}
	\label{thm:affine-bellman-reduction}
	Suppose that Assumptions \ref{ass:psd-W} and \ref{ass:riccati-penalty} holds.
	Then, for every \(t=0,\ldots,N\), the value function has the form
	\begin{equation}
		V_t(x_t)
		=
		x_t^\top P_tx_t+2s_t^\top x_t+z_t .
		\label{eq:affine-value}
	\end{equation}
	The coefficients satisfy the terminal conditions \(P_N=Q_f, s_N=0, z_N=0,\)
	and are computed backward in time by
	\begin{align}
		P_t&=Q+A^\top(I+P_{t+1}W)^{-1}P_{t+1}A,
		\label{eq:P-recursion-affine}\\
		s_t&=A^\top(I+P_{t+1}W)^{-1}\big(P_{t+1}\Xi d_t+s_{t+1}\big),
        \label{eq:s-recursion-affine}\\
		z_t&=z_{t+1}+\mathcal Z_t,
		\label{eq:z-recursion-affine}
	\end{align}
	where
	\begin{equation}
		\begin{aligned}
			\mathcal Z_t
			:={}&
			\frac1M\sum_{i=1}^M
			\Big[
			(Bk_t+\Xi G_t^i)^\top
			P_{t+1}
			(Bk_t+\Xi G_t^i)\\
			&
			+2s_{t+1}^\top(Bk_t+\Xi G_t^i)
			-\lambda\|G_t^i-\widehat w_t^{(i)}\|^2
			\Big]+k_t^\top Rk_t.
		\end{aligned}
		\label{eq:Z-increment}
	\end{equation}
	
	Moreover, an optimal policy pair can be obtained as follows.
	\begin{enumerate}
		\item The unique optimal control policy is given by
		\begin{equation}
			u_t^*=\pi_t^*(x_t)=K_tx_t+k_t,
			\qquad t=0,\ldots,N-1,
			\label{eq:optimal-control-policy}
		\end{equation}
		where \(K_t\) and \(k_t\) are defined in
		\eqref{eq:K-affine} and \eqref{eq:k-affine} respectively.
		
		\item Given the control policy \eqref{eq:optimal-control-policy}, the support point associated with \(\widehat w_t^{(i)}\) is $w_t^{*,i}=H_t x_t+G_t^i$, $i=1,\ldots,M$,
		where \(H_t\) and \(G_t^i\) are defined in  \eqref{eq:G-affine}. At $u_t^*$, a worst-case disturbance
		distribution policy is specified by
		\begin{equation*}
			\gamma_t^*(x_t,u_t^*)
			:=
			\frac1M\sum_{i=1}^M
			\delta_{w_t^{*,i}},
			\quad t=0,\ldots,N-1.
		\end{equation*}
	\end{enumerate}
\end{theorem}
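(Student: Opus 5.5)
The plan is a backward induction on \(t=N,N-1,\ldots,0\), with Lemma~\ref{lem:one-step-affine-minimax} as the one-step engine. At \(t=N\) the form \eqref{eq:affine-value} holds trivially with \(P_N=Q_f\succeq0\), \(s_N=0\), \(z_N=0\). For the inductive step I assume \(V_{t+1}\) has the form \eqref{eq:one-step-continuation} with \(P_{t+1}\succeq0\). I also assume that Assumption~\ref{ass:riccati-penalty} (stated later in the paper) is precisely what guarantees the admissibility condition \eqref{eq:one-step-admissibility}, \(\lambda I-\Xi^\top P_{t+1}\Xi\succ0\), along the backward Riccati sequence. Presumably it supplies a uniform upper bound on \(P_{t}\), for instance via monotonicity of \eqref{eq:P-recursion-affine} and a bound by the stabilizing solution. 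Under this condition each inner supremum over \(w_t\) in \eqref{eq:Bellman-finite-support} is a strictly concave quadratic. The dual reformulation is therefore finite, and Lemma~\ref{lem:one-step-affine-minimax} yields the unique affine minimizer \(u_t^*\) and the support points \(w_t^{*,i}\).

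The core computation is to substitute these optimizers back into \eqref{eq:Bellman-finite-support} and identify coefficients. I will first maximize over \(w\) for fixed \((x,u)\). Writing \(y=Ax+Bu\), the maximization of \((y+\Xi w)^\top P_{t+1}(y+\Xi w)+2s_{t+1}^\top(y+\Xi w)-\lambda\|w-\widehat w^{(i)}\|^2\) gives a quadratic function of \(y\). Averaging over \(i\) and using \(\frac1M\sum_i\widehat w^{(i)}=d_t\), I expect its quadratic part to be \(y^\top\bigl(P_{t+1}+P_{t+1}\Xi(\lambda I-\Xi^\top P_{t+1}\Xi)^{-1}\Xi^\top P_{t+1}\bigr)y\). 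By the Woodbury identity this equals \(y^\top(I-\lambda^{-1}P_{t+1}\Xi\Xi^\top)^{-1}P_{t+1}y\). The linear part should be \(2y^\top(I-\lambda^{-1}P_{t+1}\Xi\Xi^\top)^{-1}(P_{t+1}\Xi d_t+s_{t+1})\). The key point is that the empirical covariance \(\Sigma\) enters only the constant term, since the quadratic form in \(\widehat w^{(i)}\) averages to a constant. Adding \(u^\top Ru\) and minimizing over \(u\) is a standard LQ completion of squares. Here I use a second Woodbury-type identity,
\begin{equation*}
\tilde P-\tilde PB(R+B^\top\tilde PB)^{-1}B^\top\tilde P=(I+P_{t+1}W)^{-1}P_{t+1},
\qquad \tilde P:=(I-\lambda^{-1}P_{t+1}\Xi\Xi^\top)^{-1}P_{t+1},
\end{equation*}
which reproduces the recursion \eqref{eq:P-recursion-affine} for \(P_t\) and \eqref{eq:s-recursion-affine} for \(s_t\). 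The constant term is obtained most cleanly by direct evaluation at the optimizers, which gives \(\mathcal Z_t\) in \eqref{eq:Z-increment}, rather than by simplification.

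To close the induction I must show \(P_t\succeq0\) and that \(I+P_{t+1}W\) is invertible. Since \(W\succeq0\) by Assumption~\ref{ass:psd-W} and \(P_{t+1}\succeq0\), the matrix \(P_{t+1}W\) is similar to \(W^{1/2}P_{t+1}W^{1/2}\succeq0\) on the relevant subspace, so its eigenvalues are nonnegative and \(I+P_{t+1}W\) is invertible. Moreover \((I+P_{t+1}W)^{-1}P_{t+1}=P_{t+1}^{1/2}(I+P_{t+1}^{1/2}WP_{t+1}^{1/2})^{-1}P_{t+1}^{1/2}\succeq0\), hence \(P_t\succeq Q\succeq0\) and the induction proceeds.

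Finally, optimality of the policy pair follows by a verification argument. Under \(\pi^*\) and \(\gamma^*\), the Bellman recursion \eqref{eq:Bellman-DP} holds with equality at every stage. For any \(\gamma\), using \(\pi^*\) gives at most \(V_t\), because the pointwise inner supremum dominates. For any \(\pi\) against a suitable adversary, the outcome is at least \(V_t\), because the minimizer over \(u\) is unique and the maximization over \(w\) is pointwise attainable. Telescoping these inequalities over the stages yields the minimax value, and uniqueness of \(u_t^*\) follows from the strict convexity given by Lemma~\ref{lem:one-step-affine-minimax}. I expect the main obstacle to be the algebraic identification of the recursions, in particular the two matrix identities that collapse the nested inner-max and outer-min into the single \(W\)-form, and checking that the admissibility condition propagates. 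I would verify the identities first via the push-through identity \((I+P U)^{-1}P=P(I+UP)^{-1}\).
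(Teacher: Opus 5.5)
Your proposal is correct. It uses the same backward induction as the paper: Lemma~\ref{lem:one-step-affine-minimax} supplies the one-step optimizers, and the constant term is obtained by evaluating at them. The difference is how $P_t$ and $s_t$ are identified.

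\textbf{The paper's route.} It differentiates the right-hand side of \eqref{eq:Bellman-finite-support} in $x_t$, which is an envelope argument. It then uses the one-step first-order conditions (cf.\ \eqref{eq:combined-gradient-equation}) to read off $P_tx_t+s_t=Qx_t+A^\top(I+P_{t+1}W)^{-1}(P_{t+1}Ax_t+P_{t+1}\Xi d_t+s_{t+1})$.

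\textbf{Your route.} You compute the averaged inner maximum explicitly as a quadratic in $y=Ax+Bu$ with weight $\tilde P=(I-\lambda^{-1}P_{t+1}\Xi\Xi^\top)^{-1}P_{t+1}$. You then complete the square in $u$ and collapse the result with $\tilde P-\tilde PB(R+B^\top\tilde PB)^{-1}B^\top\tilde P=(I+P_{t+1}W)^{-1}P_{t+1}$. The same factorization $(I+\tilde PBR^{-1}B^\top)^{-1}$ handles the linear term. Both of your identities check out.

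\textbf{Comparison.} The paper's route is shorter because it reuses the gradient equations already derived in the lemma. Yours is more self-contained: it presents each step as an ordinary LQ step with an inflated weight. It also shows directly that $\Sigma$ affects only $z_t$.

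\textbf{What you add.} You make explicit two points the paper's proof leaves implicit. First, you propagate $P_t\succeq0$, which is needed to reapply the lemma; the paper leaves this to Proposition~\ref{prop:riccati-convergence}. Second, you give a verification argument that the recursion equals the inf--sup over the policy classes, whereas the paper takes \eqref{eq:Bellman-DP} and \eqref{eq:Bellman-finite-support} as given. In that argument, the lower bound rests on the coupling estimate $W_2^2\bigl(\frac1M\sum_i\delta_{w_t^{*,i}(x,u)},\nu_t\bigr)\le\frac1M\sum_i\|w_t^{*,i}(x,u)-\widehat w_t^{(i)}\|^2$, with $w_t^{*,i}(x,u)$ as in \eqref{eq:wstar-affine-xu}. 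It does not rest on uniqueness of the minimizer.

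\textbf{Assumption~\ref{ass:riccati-penalty}.} Your guess is exactly the paper's mechanism. The condition $Q_f\preceq\mathcal R(Q_f)$ and order preservation of $\mathcal R$ give $S_j\uparrow P_{\rm ss}$. Hence $\lambda I-\Xi^\top P_{t+1}\Xi\succeq\lambda I-\Xi^\top P_{\rm ss}\Xi\succ0$. You should state this step rather than presume it.
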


\begin{proof}
	The detailed proof is given in
	Appendix~\ref{app:thm1}.
\end{proof}

\begin{remark}
The quadratic coefficient \(P_t\) and the matrix \(K_t\) in Theorem~\ref{thm:affine-bellman-reduction} do not depend on the
empirical mean \(d_t\). The empirical mean enters the recursion for \(s_t\)
and the formula for \(k_t\), and therefore also affects \(G_t^i\). The mean
and covariance turnpike estimates below use Riccati convergence and the
uniform product estimate.
\end{remark}

\subsection{Riccati convergence and uniform bounds}
\label{subsec:riccati-solvability}

The coefficient \(P_t\) in the finite-horizon value function is generated by the Riccati recursion \eqref{eq:P-recursion-affine}. This subsection states the conditions under which the sequence in \eqref{eq:P-recursion-affine} converges to a steady-state solution and then records a uniform lower bound over the sequence and at its limit.

The following proposition identifies the limiting stabilizing  solution associated with  \eqref{eq:P-recursion-affine}.

\begin{proposition}
	\label{prop:riccati-convergence}
	Define the Riccati map
\begin{equation}
	\mathcal R(P):=Q+A^\top(I+PW)^{-1}PA .
	\label{eq:W-R-map}
\end{equation}
with $W$ defined in \eqref{eq:W-def-bellman}. Starting from \(Q_f\), define
\begin{equation}
	S_0:=Q_f,\qquad
	S_{j+1}:=\mathcal R(S_j).\qquad j\ge0
	\label{eq:S-orbit}
\end{equation}
Then under Assumption~\ref{ass:psd-W}, the sequence \eqref{eq:S-orbit} is well defined, remains positive semidefinite and bounded, and converges to a stabilizing positive-semidefinite solution
	\(P_{\rm ss}\) of
	\(
	P=\mathcal R(P).
	\)
	Moreover,
	\begin{equation}
		A_c:=(I+WP_{\rm ss})^{-1}A
		\label{eq:Ac-def}
	\end{equation}
	is Schur.
\end{proposition}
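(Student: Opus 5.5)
The plan is to recast $\mathcal R$ as a standard discrete-time LQ Riccati operator, so that classical DARE convergence theory applies. By Assumption~\ref{ass:psd-W} we may write $W=EE^\top$ with $E:=W^{1/2}$. The first step is to check well-definedness: for $P\succeq0$, the nonzero eigenvalues of $PW=PEE^\top$ coincide with those of $E^\top PE\succeq0$, so $I+PW$ is invertible. The push-through identity $(I+PEE^\top)^{-1}P=P-PE(I+E^\top PE)^{-1}E^\top P$ then gives
\begin{equation*}
\mathcal R(P)=Q+A^\top PA-A^\top PE(I+E^\top PE)^{-1}E^\top PA .
\end{equation*}
This is exactly the Riccati operator of the LQ problem
\begin{equation*}
y_{j+1}=Ay_j+Ev_j,\qquad \text{stage cost } y^\top Qy+\|v\|^2 .
\end{equation*}
Equivalently, $\mathcal R(P)=\min_F\{Q+F^\top F+(A+EF)^\top P(A+EF)\}$ in the Loewner order, with minimizer $F_P=-(I+E^\top PE)^{-1}E^\top PA$. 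A short computation using the same identity shows $A+EF_P=(I+WP)^{-1}A$, which identifies $A_c$ in \eqref{eq:Ac-def} with the optimal closed loop.

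With this reformulation, $S_j$ is the $j$-step optimal cost matrix of the auxiliary LQ problem with terminal weight $Q_f\succeq0$. This immediately gives $S_j\succeq0$ and $S_{j+1}=\mathcal R(S_j)$ well defined by induction. For the upper bound, stabilizability of $(A,E)$ (Assumption~\ref{ass:psd-W}) gives a gain $F_0$ with $A+EF_0$ Schur. Using the suboptimal policy $v=F_0y$ shows that $y^\top S_jy$ is at most the cost of that policy, which is bounded uniformly in $j$ because the closed loop is Schur. Hence $(S_j)$ is bounded.

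For convergence I will use the classical sandwich argument (Kalman; de Souza--Gevers--Goodwin; Bitmead--Gevers).
\begin{enumerate}
\item Let $S_j^0$ be the iterates started from $0$. Monotonicity of $\mathcal R$ (clear from the min-representation) gives $S_j^0$ nondecreasing and bounded, so $S_j^0\to P_{\rm ss}$, a PSD fixed point.
\item Observability of $(A,Q^{1/2})$ together with the fixed-point identity
\begin{equation*}
P_{\rm ss}=Q+F^\top F+A_c^\top P_{\rm ss}A_c,\qquad F:=F_{P_{\rm ss}},
\end{equation*}
shows that $A_c$ is Schur. The argument is a Lyapunov one: an unstable eigenvector $v$ of $A_c$ would force $Q^{1/2}v=0$ and $Fv=0$, hence $Av=A_cv$, contradicting observability.
\item The lower bound is $S_j\succeq S_j^0$, by monotonicity, since $Q_f\succeq0$.
\item For the upper bound, apply the fixed gain $F$ for $j$ steps and then the terminal weight $Q_f$. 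This gives
\begin{equation*}
S_j\preceq \sum_{i<j}(A_c^\top)^i(Q+F^\top F)A_c^i+(A_c^\top)^jQ_fA_c^j ,
\end{equation*}
and the right-hand side tends to $P_{\rm ss}$ because $A_c$ is Schur.
\end{enumerate}
Squeezing yields $S_j\to P_{\rm ss}$, and $P_{\rm ss}$ is stabilizing with $A_c$ Schur as required.

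The main obstacle is the step establishing that the limit is stabilizing, i.e.\ that $A_c$ is Schur, together with convergence for an arbitrary (non-monotone) initialization $Q_f$. The orbit from $Q_f$ need not be monotone, so the Schur property of $A_c$, derived from observability, must be obtained first. Only then does the upper bound in the sandwich collapse onto $P_{\rm ss}$. I would handle the Schur property via the eigenvector/Lyapunov argument described above; the remaining steps are routine.
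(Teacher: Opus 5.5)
Your proposal is correct and follows the same route as the paper. Both arguments rewrite $\mathcal R$ via the matrix inversion lemma as the standard discrete-time Riccati operator with input matrix $W^{1/2}$ and unit input weight, invoke classical convergence under stabilizability of $(A,W^{1/2})$ and observability of $(A,Q^{1/2})$, and identify $(I+WP_{\rm ss})^{-1}A$ as the optimal closed loop. The only difference is that the paper cites the standard convergence theorem, while you prove it: you squeeze $S_j$ between the monotone orbit from $0$ and the fixed-gain cost bound, and get $A_c$ Schur from the eigenvector argument. That argument is sound as written.
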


\begin{proof}
	The detailed proof is given in
	Appendix~\ref{app:pro1}.
\end{proof}

 For a horizon \(N\), the coefficients in
\eqref{eq:P-recursion-affine} satisfy \(P_t=S_{N-t}\).
We next impose the penalty condition on the Riccati sequence and its steady-state limit. Define the Riccati tail as
\begin{align}\label{eq:closed-riccati-tail-set}
    \mathscr S:=\{P_{\rm ss}\}\cup\{S_j:j\ge0\},
\end{align}
where \(\{S_j\}_{j\ge0}\) is generated by \eqref{eq:S-orbit}.

\begin{assumption}
	\label{ass:riccati-penalty}
	$Q_f\preceq\mathcal R(Q_f)$, and the stabilizing Riccati solution satisfies
	\(
		\lambda I-\Xi^\top P_{\rm ss}\Xi\succ0 .
	\)
\end{assumption}

\begingroup
\begin{remark}
		Assumption~\ref{ass:riccati-penalty} can be checked without repeating the
penalty test for each horizon. Since
\(S_0=Q_f\preceq S_1=\mathcal R(Q_f)\) and \(\mathcal R\) is order
preserving, \(S_j\preceq S_{j+1}\) for every \(j\ge0\). Proposition~
\ref{prop:riccati-convergence} gives \(S_j\to P_{\rm ss}\), and hence
\(S_j\preceq P_{\rm ss}\). Therefore,
$\lambda I-\Xi^\top S_j\Xi
\succeq
\lambda I-\Xi^\top P_{\rm ss}\Xi
\succ0$ for all $j\ge0$.
Together with \(P_t=S_{N-t}\), this verifies the one-step penalty condition
for every horizon. The condition \(Q_f\preceq\mathcal R(Q_f)\) is used for this
horizon-independent verification. It is a sufficient condition and is not
claimed to be necessary for a fixed horizon.
\end{remark}
\endgroup

For a square matrix \(Y\), let \(\sigma_{\min}(Y)\) denote its smallest singular value. The next lemma gives a {uniform} positive lower bound over all matrices in \(\mathscr S\).

\begin{lemma}
	\label{lem:closed-tail-margin}
Under	Assumptions
   \ref{ass:psd-W} and
	\ref{ass:riccati-penalty}, there exists \(\delta>0\) such that, for
	every \(S\in\mathscr S\),
	\begin{equation}
		\begin{gathered}
			\lambda I-\Xi^\top S\Xi\succeq \delta I,\\
			\sigma_{\min}(I+SW)\ge\delta,
			\qquad
			\sigma_{\min}(I+WS)\ge\delta .
		\end{gathered}
		\label{eq:uniform-admissibility-orbit}
	\end{equation}
\end{lemma}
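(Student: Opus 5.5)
The plan is to deduce all three bounds from monotonicity of the Riccati orbit plus a compactness argument for the set $\mathscr S$.

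\emph{Step 1: order and boundedness of $\mathscr S$.} By the remark after Assumption~\ref{ass:riccati-penalty}, the condition $Q_f\preceq\mathcal R(Q_f)$ and the order-preserving property of $\mathcal R$ give $0\preceq S_j\preceq S_{j+1}\preceq P_{\rm ss}$ for all $j\ge0$. Every $S\in\mathscr S$ therefore satisfies
\[
0\preceq S\preceq P_{\rm ss}.
\]
For the first inequality in \eqref{eq:uniform-admissibility-orbit} this gives
\[
\lambda I-\Xi^\top S\Xi\succeq\lambda I-\Xi^\top P_{\rm ss}\Xi\succeq\delta_1 I,
\]
where $\delta_1>0$ is the smallest eigenvalue of $\lambda I-\Xi^\top P_{\rm ss}\Xi$. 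This eigenvalue is positive by Assumption~\ref{ass:riccati-penalty}.

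\emph{Step 2: invertibility of $I+SW$ for a single $S$.} Fix $S\succeq0$; recall $W\succeq0$ by Assumption~\ref{ass:psd-W}. Suppose $(I+SW)v=0$. Then $v=-SWv$, so
\[
v^\top Wv=-v^\top WSWv\le 0,
\]
which forces $W^{1/2}v=0$, hence $Wv=0$, and then $v=-SWv=0$. So $I+SW$ is invertible. Since $I+WS=(I+SW)^\top$, it has the same singular values and is invertible as well. Hence $\sigma_{\min}(I+SW)=\sigma_{\min}(I+WS)>0$ for each $S\in\mathscr S$.

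\emph{Step 3: uniformity, which is the main obstacle.} Pointwise positivity does not give a uniform $\delta$ on an infinite set. I would use compactness. The set $\mathscr S$ consists of a convergent sequence $S_j\to P_{\rm ss}$ (Proposition~\ref{prop:riccati-convergence}) together with its limit, so it is compact. The map $S\mapsto\sigma_{\min}(I+SW)$ is continuous, since singular values are $1$-Lipschitz in the operator norm. A continuous, strictly positive function attains a positive minimum $\delta_2>0$ on a compact set.

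A quantitative alternative uses the identity $I+SW=S^{1/2}\cdots$-type similarity: $SW$ is similar on the relevant subspace to $S^{1/2}WS^{1/2}\succeq0$, so its eigenvalues are nonnegative. Combined with $\|S\|\le\|P_{\rm ss}\|$, this bounds $\|(I+SW)^{-1}\|$ explicitly. The compactness route is simpler, and I would use it.

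Finally, take $\delta:=\min\{\delta_1,\delta_2\}$. This gives all three inequalities in \eqref{eq:uniform-admissibility-orbit}, the last one through the transpose identity from Step 2.
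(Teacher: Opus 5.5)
Your proof is correct and follows essentially the paper's argument: pointwise positivity on $\mathscr S$ is upgraded to a uniform $\delta$ via compactness of the convergent Riccati tail $\{S_j\}\cup\{P_{\rm ss}\}$ and continuity of $S\mapsto\sigma_{\min}(I+SW)$. The differences are minor. You get the first bound directly from $S\preceq P_{\rm ss}$, with explicit $\delta_1=\lambda_{\min}(\lambda I-\Xi^\top P_{\rm ss}\Xi)$, where the paper uses compactness for this bound too. You prove invertibility of $I+SW$ by a kernel argument rather than via the spectrum of $W^{1/2}SW^{1/2}$. You handle $I+WS$ through the transpose identity $I+WS=(I+SW)^\top$.
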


\begin{proof}
	The detailed proof is given in
	Appendix~\ref{app:lem2}.
\end{proof}

\section{Static reference and mean-state dynamics}
\label{sec:static}

This section defines the time-independent static mean reference and derives the finite-horizon equations for the mean state and mean adjoint, so that in Section \ref{sec:turnpike} one can show the convergence of the mean variables to the static references.

\subsection{Static Hamiltonian saddle problem}
\label{subsec:static-kkt}

{We first study the time-independent mean relations associated with
\eqref{eq:sys}. Let \(m\in\mathbb R^n\) denote the static mean state,
\(u\in\mathbb R^m\) the static control, and
\(\bar w\in\mathbb R^k\) the static mean of the distribution selected by the
adversary. They satisfy
$m=Am+Bu+\Xi\bar w$.
Here, \(\bar w\) is  not an
empirical support point.}

{Consider the Lagrangian
\begin{multline}
	\mathscr L(m,u,\bar w,p)
	:=
	m^\top Qm+u^\top Ru-\lambda\|\bar w-d\|^2 \\
	+2p^\top(Am+Bu+\Xi\bar w-m).
	\label{eq:static-lagrangian}
\end{multline}
The factor \(2\) in front of \(p^\top\) is used so that the stationarity
condition with respect to \(m\) gives
\(p=Qm+A^\top p\).}

For fixed \(m\) and \(p\), the unique minimizer with respect to \(u\) and
the unique maximizer with respect to \(\bar w\) are
\begin{equation}
	u=-R^{-1}B^\top p,
	\qquad
	\bar w=d+\lambda^{-1}\Xi^\top p .
	\label{eq:stationary-u-w}
\end{equation}
Substituting \eqref{eq:stationary-u-w} into
\eqref{eq:static-lagrangian} gives the reduced static Hamiltonian
\begin{equation*}
	\begin{aligned}
		\mathscr H_{\rm s}(m,p)
		:={}
		m^\top Qm
		+2p^\top(A-I)m-p^\top Wp
		+2p^\top\Xi d .
	\end{aligned}
\end{equation*}
We consider the reduced static saddle problem
\begin{equation}
	\min_{m\in\mathbb R^n}
	\max_{p\in\mathbb R^n}
	\mathscr H_{\rm s}(m,p).
	\label{eq:static-minimax}
\end{equation}
Since \(Q\succeq0\), \(W\succeq0\),
\(\mathscr H_{\rm s}\) is convex in \(m\) and concave in \(p\).

The stationarity conditions of
\eqref{eq:static-minimax} are
$p=Qm+A^\top p$, $m=Am-Wp+\Xi d$.
By \eqref{eq:stationary-u-w}, the second equation in
\eqref{eq:reduced-static-first-order} recovers the static mean relation
stated above. Equivalently,
\begin{equation}
	\underbrace{
		\begin{bmatrix}
			I-A & W\\
			-Q & I-A^\top
		\end{bmatrix}
	}_{\mathcal K_e}
	\begin{bmatrix}
		m\\p
	\end{bmatrix}
	=
	\begin{bmatrix}
		\Xi d\\0
	\end{bmatrix}.
	\label{eq:static-KKT-matrix}
\end{equation}

\begin{proposition}
	\label{prop:static-equilibrium}
	Under Assumptions~\ref{ass:stationary-empirical} and
	\ref{ass:psd-W}, the reduced static Hamiltonian problem
	\eqref{eq:static-minimax} has a unique saddle point
	\((m^e,p^e)\). It is characterized by
	\eqref{eq:static-KKT-matrix}. The associated static control input and
	static mean of the worst-case disturbance distribution are
	\begin{equation}
		u^e=-R^{-1}B^\top p^e,
		\qquad
		\bar w^e=d+\lambda^{-1}\Xi^\top p^e.
		\label{eq:ue-we}
	\end{equation}
	If \(d=0\), then
	$(m^e,p^e,u^e,\bar w^e)=(0,0,0,0)$.
\end{proposition}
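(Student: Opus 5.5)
The plan is to show that $\mathcal K_e$ is invertible, which yields existence and uniqueness of the saddle point. For a function that is convex in $m$ and concave in $p$ (convexity and concavity were already noted above), a point is a saddle point if and only if both partial gradients vanish. Those stationarity conditions are exactly \eqref{eq:static-KKT-matrix}, so saddle points correspond one-to-one with solutions of that linear system. The formulas \eqref{eq:ue-we} then follow directly from \eqref{eq:stationary-u-w}. If $d=0$, the right-hand side of \eqref{eq:static-KKT-matrix} vanishes, and invertibility forces $(m^e,p^e)=(0,0)$, hence $u^e=0$ and $\bar w^e=0$.

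To prove invertibility, I will suppose $\mathcal K_e[\,m;p\,]=0$, i.e.
\[
(I-A)m=-Wp,\qquad (I-A^\top)p=Qm .
\]
Left-multiply the second equation by $m^\top$ and the first by $p^\top$, and use $m^\top(I-A^\top)p=p^\top(I-A)m$. This gives
\[
m^\top Qm = p^\top(I-A)m = -p^\top Wp .
\]
Since $Q\succeq0$ and $W\succeq0$, the left side is nonnegative and the right side is nonpositive, so both vanish: $Q^{1/2}m=0$ and $W^{1/2}p=0$, hence $Qm=0$ and $Wp=0$. The system then reduces to
\[
Am=m,\qquad A^\top p=p .
\]

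Now Assumption~\ref{ass:psd-W} is used. We have $Am=m$ and $Q^{1/2}m=0$. If $m\neq0$, it would be an eigenvector of $A$ for eigenvalue $1$ lying in the kernel of $Q^{1/2}$, contradicting observability of $(A,Q^{1/2})$ by the PBH test. So $m=0$. Similarly, $p^\top A=p^\top$ and $p^\top W^{1/2}=0$. If $p\neq0$, it would be a left eigenvector of $A$ for the eigenvalue $1$, which is not in the open unit disk, orthogonal to the range of $W^{1/2}$. This contradicts stabilizability of $(A,W^{1/2})$ by the PBH test. So $p=0$, and $\mathcal K_e$ is nonsingular.

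The one delicate point is the equivalence between saddle points and the stationary system. Convex–concave structure ensures that any stationary point is a saddle point. Conversely, any saddle point is a minimizer of the differentiable convex function $\mathscr H_{\rm s}(\cdot,p^e)$ and a maximizer of the concave function $\mathscr H_{\rm s}(m^e,\cdot)$, so both gradients vanish there. Thus uniqueness of the solution of \eqref{eq:static-KKT-matrix} gives uniqueness of the saddle point. No strict convexity is needed, which matters because $Q$ and $W$ may be singular. I expect the PBH contradiction argument to be the only substantive step; the rest is bookkeeping.
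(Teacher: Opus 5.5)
Your proof is correct, but it shows that $\mathcal K_e$ is nonsingular by a different route than the paper.

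\textbf{The paper's route.} It invokes Proposition~\ref{prop:riccati-convergence} to get the stabilizing solution $P_{\rm ss}$. It then uses the Riccati identity to multiply $\mathcal K_e$ on the left by $\left[\begin{smallmatrix}(I+WP_{\rm ss})^{-1}&0\\ A^\top P_{\rm ss}(I+WP_{\rm ss})^{-1}&I\end{smallmatrix}\right]$ and on the right by $\left[\begin{smallmatrix}I&0\\ P_{\rm ss}&I\end{smallmatrix}\right]$. The result is block upper triangular with diagonal blocks $I-A_c$ and $I-A_c^\top$. These are invertible because $A_c$ is Schur.

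\textbf{Your route.} You use the energy identity $m^\top Qm=-p^\top Wp$ on the kernel, followed by the PBH test at the eigenvalue $1$. This is self-contained and needs less. It only requires that $1$ is neither an unobservable mode of $(A,Q^{1/2})$ nor an uncontrollable mode of $(A,W^{1/2})$. It does not rely on Riccati convergence or on the existence of a stabilizing solution.

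\textbf{What the paper's factorization buys.} In the coordinates $m$ and $p-P_{\rm ss}m$, the static system decouples through $A_c$.
\begin{itemize}
\item It gives explicit formulas for $(m^e,p^e)$.
\item It shows that $p^e-P_{\rm ss}m^e$ solves the fixed-point equation $(I-A_c^\top)s=A^\top P_{\rm ss}(I+WP_{\rm ss})^{-1}\Xi d$ of the $s$-recursion \eqref{eq:s-recursion-affine}. This is the quantity later used as $s_{\rm ss}$ in Proposition~\ref{prp:quasi-turnpike-cost}.
\item It is the static counterpart of the change of variables $r_t=q_t-P_te_t$ used in the mean turnpike proof.
\item It yields a bound on $\|\mathcal K_e^{-1}\|$ in terms of $(I-A_c)^{-1}$ and $P_{\rm ss}$. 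Your argument gives invertibility only qualitatively.
\end{itemize}

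For the saddle-point part, you use the first-order characterization for a differentiable convex--concave function. This is equivalent to the paper's explicit difference identities $\mathscr H_{\rm s}(m,p^e)-\mathscr H_{\rm s}(m^e,p^e)=(m-m^e)^\top Q(m-m^e)$ and its counterpart in $p$.
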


\begin{proof}
Since \(Q\succeq0\) and \(W\succeq0\),
	\(\mathscr H_{\rm s}\) is convex in \(m\) and concave in \(p\). Its
	first-order conditions are
	\begin{equation}
		Qm+(A^\top-I)p=0,
		\qquad
		(A-I)m-Wp+\Xi d=0,
		\label{eq:reduced-static-first-order}
	\end{equation}
	which are equivalent to \eqref{eq:static-KKT-matrix}.
	
	Proposition~\ref{prop:riccati-convergence} gives the stabilizing Riccati
	solution \(P_{\rm ss}\) and the Schur matrix
	\(A_c=(I+WP_{\rm ss})^{-1}A\). The Riccati equation and the identities
	\(
	P_{\rm ss}(I+WP_{\rm ss})^{-1}
	=(I+P_{\rm ss}W)^{-1}P_{\rm ss}
	\)
	and
	\(
	I-W(I+P_{\rm ss}W)^{-1}P_{\rm ss}
	=(I+WP_{\rm ss})^{-1}
	\)
	yield
	\begin{equation*}
		\begin{aligned}
			&
			\begin{bmatrix}
				(I+WP_{\rm ss})^{-1}&0\\
				A^\top P_{\rm ss}(I+WP_{\rm ss})^{-1}&I
			\end{bmatrix}
			\mathcal K_e
			\begin{bmatrix}
				I&0\\
				P_{\rm ss}&I
			\end{bmatrix}\\
			&\qquad=
			\begin{bmatrix}
				I-A_c&(I+WP_{\rm ss})^{-1}W\\
				0&I-A_c^\top
			\end{bmatrix}.
		\end{aligned}
	\end{equation*}
	The two matrices multiplying \(\mathcal K_e\) are nonsingular. The matrix on
	the right-hand side is block upper triangular, and its diagonal blocks are
	nonsingular because \(A_c\) is Schur. Hence \(\mathcal K_e\) is nonsingular,
	and \eqref{eq:static-KKT-matrix} has a unique solution.
	
For \(m,p\in\mathbb R^n\), the first-order conditions at
	\((m^e,p^e)\) give
	\begin{align*}
		&\mathscr H_{\rm s}(m,p^e)-\mathscr H_{\rm s}(m^e,p^e)=(m-m^e)^\top Q(m-m^e)\ge0,\\
		&\mathscr H_{\rm s}(m^e,p)-\mathscr H_{\rm s}(m^e,p^e)=-(p-p^e)^\top W(p-p^e)\le0.
		\label{eq:static-saddle-max}
	\end{align*}
	Therefore, $\mathscr H_{\rm s}(m^e,p)
			\le \mathscr H_{\rm s}(m^e,p^e)\le \mathscr H_{\rm s}(m,p^e)$ for all
			$m,p\in\mathbb R^n$.
	Thus \((m^e,p^e)\) is a saddle point. Every saddle point satisfies
	\eqref{eq:reduced-static-first-order}; uniqueness follows from the
	nonsingularity of \(\mathcal K_e\). Equation~\eqref{eq:ue-we} uniquely
	determines \(u^e\) and \(\bar w^e\). If \(d=0\), uniqueness gives the zero
	solution.
\end{proof}

\begin{remark}
The reduced problem \eqref{eq:static-minimax} has the Cartesian
strategy space \(\mathbb R^n\times\mathbb R^n\). The maximizing variable
\(p\) is the static mean adjoint. The equilibrium relation
\(m=Am+Bu+\Xi\bar w\) is recovered from the first-order condition with
respect to \(p\) and \eqref{eq:stationary-u-w}. Thus, the reduced
Hamiltonian problem retains the stationary control and disturbance-mean
relations without introducing a separate zero-sum game with a shared
constraint.
\end{remark}

\begin{remark}
The empirical mean and covariance play different roles. Since the matrix
\(\mathcal K_e\) in \eqref{eq:static-KKT-matrix} is independent of \(d\) and
\(\Sigma\), the static mean reference depends linearly on \(d\). Changing
\(\Sigma\) alone does not affect this reference. By contrast, the reference
covariance defined later by \eqref{eq:Xe-Lyapunov} depends on \(\Sigma\) through
\(D(P_{\rm ss})\) and is independent of \(d\). This separates the roles of
the empirical mean and covariance in the two turnpike estimates.
\end{remark}

The tuple \((m^e,u^e,\bar w^e,p^e)\)  recovered from the
unique saddle point in Proposition~\ref{prop:static-equilibrium} is called the
static mean reference, which is generally nonzero.

\subsection{Finite-horizon mean-state and adjoint  equations}
\label{subsec:finite-horizon-mean-hamiltonian}

Next we study the dynamics of finite-horizon mean-state and the corresponding adjoint states required for the mean turnpike estimate.

Let \(x_t^*\) be the closed-loop state generated by the finite-horizon optimal control and distribution policies constructed in Theorem~\ref{thm:affine-bellman-reduction}. Let \(\mathbb E^*\) denote expectation under this optimal policy pair. For a given state \(x\), define the mean of the worst-case disturbance distribution by
\begingroup
\begin{equation}
	\gamma_t(x)
	:=
	\frac1M\sum_{i=1}^M w_t^{*,i}(x),
	\label{eq:avg-worst-case-support}
\end{equation}
\endgroup
{where \(w_t^{*,i}(\cdot)\) is the \(i\)-th support point of the worst-case disturbance distribution  in Theorem~\ref{thm:affine-bellman-reduction}.} Define the mean state, the mean control input, and the mean of the
worst-case disturbance distribution by
\begin{equation*}
		m_t^*:=\mathbb E^*[x_t^*],~
		\bar{u}_t^*:=
		\mathbb E^*\big[u_t^*(x_t^*)\big],
		~\bar{w}_t^*:=\mathbb E^*\left[\gamma_t(x_t^*)\right].
\end{equation*}
 Since the value function has the affine-quadratic form
\eqref{eq:affine-value}, define {the finite-horizon mean adjoint by}
\begin{equation}
	p_t^*:=P_tm_t^*+s_t .
	\label{eq:mean-p-affine}
\end{equation}
Here, \(m_t^*\) and \(p_t^*\) are defined for \(t=0,\ldots,N\), whereas \(\bar{u}_t^*\) and \(\bar{w}_t^*\) are defined for \(t=0,\ldots,N-1\). In what follows, we shall characterize the discrepancy between the above time-varying mean variables and the solution of the static problem \eqref{eq:static-minimax}.

We first relate the mean control input and the mean of the worst-case
disturbance distribution to the mean adjoint. We evaluate the one-step first-order optimality conditions along the finite-horizon optimal closed-loop trajectory, and average over the support points of the worst-case disturbance distribution. When Assumption~\ref{ass:stationary-empirical} holds,  taking expectations yields
\begin{align*}
    0&=\Xi^\top\big(P_{t+1}m_{t+1}^*+s_{t+1}\big)-\lambda\big(\bar{w}_t^*-d\big),\\
    0&=R\bar{u}_t^*+B^\top\big(P_{t+1}m_{t+1}^*+s_{t+1}\big).
\end{align*}
By \eqref{eq:mean-p-affine}, the above two equations yield
\begin{equation}
	\bar{u}_t^*
	=
	-R^{-1}B^\top p_{t+1}^*,
	\qquad
	\bar{w}_t^*
	=
	d+\lambda^{-1}\Xi^\top p_{t+1}^* .
	\label{eq:mean-u-w-affine}
\end{equation}

Taking expectations in the state equation \eqref{eq:sys} under the worst-case disturbance distribution and using \eqref{eq:mean-u-w-affine} gives
\begin{equation}
	m_{t+1}^*
	=
	Am_t^*
	-Wp_{t+1}^*
	+\Xi d .
	\label{eq:mean-state-hamiltonian}
\end{equation}

By \eqref{eq:mean-p-affine} and \eqref{eq:mean-state-hamiltonian},
\[
	\begin{aligned}
		p_{t+1}^*
		&=
		P_{t+1}m_{t+1}^*
		+s_{t+1} \\
		&=
		P_{t+1}
		\big(
		Am_t^*
		-Wp_{t+1}^*
		+\Xi d
		\big)
		+s_{t+1} .
	\end{aligned}
\]
Therefore,
\(
	\big(I+P_{t+1}W\big)p_{t+1}^*
	=
	P_{t+1}A m_t^*
	+P_{t+1}\Xi d
	+s_{t+1} .
\)
Since \(I+P_{t+1}W\) is nonsingular, we obtain
\begin{equation}
	\begin{aligned}
		p_{t+1}^*
		=
		&(I\!+\!P_{t+1}W)^{-1}
		\big(
		P_{t+1}A m_t^*
		\!+\!P_{t+1}\Xi d
		\!+\!s_{t+1}
		\big).
	\end{aligned}
	\label{eq:pnext-mean-expression}
\end{equation}

 From
\eqref{eq:mean-p-affine} and the coefficient recursions
\eqref{eq:P-recursion-affine}--\eqref{eq:s-recursion-affine}, we have
\begin{equation*}
	\begin{aligned}
		p_t^*
		={}&
		Qm_t^*
		+
		A^\top
		(I+P_{t+1}W)^{-1} \\
		&\quad\times
		\big(
		P_{t+1}A m_t^*
		+P_{t+1}\Xi d
		+s_{t+1}
		\big).
	\end{aligned}
\end{equation*}
Then applying \eqref{eq:pnext-mean-expression} gives
\begin{equation}
	p_t^*=Qm_t^*+A^\top p_{t+1}^* .
	\label{eq:mean-adjoint-hamiltonian}
\end{equation}

The boundary conditions of \eqref{eq:mean-state-hamiltonian} and \eqref{eq:mean-adjoint-hamiltonian}  are \(m_0^*=m_0=\mathbb E[x_0]\) and \(p_N^*=Q_fm_N^*\). The terminal condition follows from \(P_N=Q_f\), \(s_N=0\), and \eqref{eq:mean-p-affine}.

Define the errors in the mean state and mean adjoint as
\begin{equation}
	e_t:=m_t^*-m^e,
	\quad
	q_t:=p_t^*-p^e,
	\quad
	r^e:=Q_fm^e-p^e .
	\label{eq:shifted-vars}
\end{equation}
Since the static  mean reference satisfies
\[
m^e=Am^e-Wp^e+\Xi d,
\qquad
p^e=Qm^e+A^\top p^e,
\]
then subtracting these equations from \eqref{eq:mean-state-hamiltonian} and \eqref{eq:mean-adjoint-hamiltonian}, together with the boundary conditions stated above, we have
\begin{equation}
	\begin{aligned}
		e_{t+1}
		&=
		Ae_t
		-Wq_{t+1}, \quad e_0
		=
		m_0-m^e,\\
		q_t
		&=
		Qe_t
		+
		A^\top q_{t+1},\quad
		q_N
		=
		Q_fe_N+r^e .
	\end{aligned}
	\label{eq:shifted-ham-bvp}
\end{equation}

Thus, the boundary conditions in \eqref{eq:shifted-ham-bvp} are determined by the initial mean error \(m_0-m^e\) and the terminal vector \(r^e=Q_fm^e-p^e\).

\section{Mean and covariance turnpike estimates}
\label{sec:turnpike}

This section establishes the mean and covariance turnpike for the closed-loop state trajectory generated by the finite-horizon optimal policies.
The analysis proceeds in three steps.  First, exponential convergence of the Riccati sequence yields a uniform product estimate for the  finite-horizon closed-loop matrices.  Second, this estimate is applied to \eqref{eq:shifted-ham-bvp} to bound the mean variables, and then used for the covariance recursion; the resulting mean and covariance estimates are finally  used for Gaussian moment surrogates, and allows an approximation scheme for finite-horizon optimal policies.

\subsection{Riccati convergence and product estimates}
\label{subsec:riccati-tail-product}

The finite-horizon matrices \(P_t\) and the sequence \(S_j\) in \eqref{eq:S-orbit} are generated by the same Riccati map from the same terminal matrix \(Q_f\), and $P_t=S_{N-t}$. The next lemma gives an exponential convergence estimate for the sequence $P_t$. 

\begin{lemma}
	\label{lem:riccati-tail-exp}
	Under Assumption~\ref{ass:psd-W}, there exist constants \(C_P>0\) and
	\(\rho_P\in(0,1)\) such that
	\begin{equation}
		\|P_{N-j}-P_{\rm ss}\|\le C_P\rho_P^j,
		\qquad j\ge0 .
		\label{eq:riccati-exp-tail}
	\end{equation}
\end{lemma}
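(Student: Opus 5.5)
The plan is to prove \eqref{eq:riccati-exp-tail} for the orbit \(S_j\) directly, since \(P_{N-j}=S_j\). The convergence \(S_j\to P_{\rm ss}\) is already given by Proposition~\ref{prop:riccati-convergence}. The task is therefore to turn qualitative convergence into a geometric rate. The rate will come from a difference identity for the Riccati map, in which the error is propagated by the Schur matrix \(A_c\).

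\textbf{Step 1 (difference identity).} I will write \(\mathcal R(P)=Q+A^\top P(I+WP)^{-1}A\), using the push-through identity \((I+PW)^{-1}P=P(I+WP)^{-1}\). For two admissible matrices \(S,P\), set \(\Delta:=S-P\). Using
\[
(I+SW)^{-1}S-(I+PW)^{-1}P=(I+SW)^{-1}\Delta(I+WP)^{-1},
\]
which one checks by multiplying on the left by \(I+SW\) and on the right by \(I+WP\), we get
\[
\mathcal R(S)-\mathcal R(P)=A^\top(I+SW)^{-1}\Delta(I+WP)^{-1}A.
\]
Now take \(P=P_{\rm ss}\) and \(S=S_j\), and write \(\Delta_j:=S_j-P_{\rm ss}\). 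This gives
\[
\Delta_{j+1}=\widetilde A_j^\top\,\Delta_j\,A_c,
\qquad
\widetilde A_j:=(I+WS_j)^{-1}A .
\]
Here I use \(((I+S_jW)^{-1})^\top=(I+WS_j)^{-1}\).

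\textbf{Step 2 (uniform closeness of \(\widetilde A_j\) to \(A_c\)).} Lemma~\ref{lem:closed-tail-margin} cannot be used as stated, because the lemma being proved assumes only Assumption~\ref{ass:psd-W}. Instead, I will argue that \(S_j\succeq0\) and \(W\succeq0\) make \(I+WS_j\) invertible: it is similar to \(I+W^{1/2}S_jW^{1/2}\) whenever the relevant factors are invertible, and in general its eigenvalues are \(\ge1\). Combined with the boundedness of \(\{S_j\}\) from Proposition~\ref{prop:riccati-convergence}, this bounds \(\|(I+WS_j)^{-1}\|\) uniformly. Since \(S_j\to P_{\rm ss}\), we then get \(\widetilde A_j\to A_c\).

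\textbf{Step 3 (geometric decay).} Because \(A_c\) is Schur, choose \(\rho_P\in(\text{spectral radius of }A_c,1)\). Pick a norm \(\|\cdot\|_*\) on \(\mathbb R^n\), obtained from a Lyapunov matrix, such that the induced norm satisfies \(\|A_c\|_*\le\sqrt{\rho_P}-\epsilon\) for some small \(\epsilon>0\). Then choose \(j_0\) so that \(\|\widetilde A_j\|_*\le\sqrt{\rho_P}\) for all \(j\ge j_0\). Iterating the identity from Step 1 gives
\[
\Delta_j=\widetilde A_{j-1}^\top\cdots\widetilde A_{j_0}^\top\,\Delta_{j_0}\,A_c^{\,j-j_0}.
\]
Bounding the left product by \(c\,\rho_P^{(j-j_0)/2}\) in the dual norm, and the right factor by \(c\,\rho_P^{(j-j_0)/2}\), yields \(\|\Delta_j\|\le c'\rho_P^{j-j_0}\) for \(j\ge j_0\). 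For the finitely many \(j<j_0\), the bound holds after enlarging \(C_P\), using boundedness of the orbit.

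\textbf{Main obstacle.} The delicate point is Step 2: showing that the left factor \(\widetilde A_j\) is uniformly contractive for large \(j\) without invoking the penalty condition. I expect the invertibility and the uniform bound on \((I+WS_j)^{-1}\) to follow from positive semidefiniteness alone. The remaining work is routine bookkeeping of norm equivalences, which allows mixing the Lyapunov norm with the spectral norm.
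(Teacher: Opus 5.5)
Your argument is correct, but its key step differs from the paper's. The paper linearizes. It computes $D\mathcal R(P_{\rm ss})[\Delta]=A_c^\top\Delta A_c$ and builds an adapted norm on $\mathbb S^n$ in which this Lyapunov-type operator is a strict contraction. It then uses differentiability of $\mathcal R$ to get a local contraction $\|\mathcal R(P_{\rm ss}+\Delta)-P_{\rm ss}\|_*\le\theta\|\Delta\|_*$ on a small ball, which the orbit eventually enters.

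You instead use the exact secant identity $\Delta_{j+1}=\widetilde A_j^\top\Delta_jA_c$. The paper's derivative formula is the limit of this identity as $S_j\to P_{\rm ss}$, so in your version there is no remainder term to control. You only need an adapted vector norm for $A_c$ on $\mathbb R^n$ and the qualitative convergence $\widetilde A_j\to A_c$. The error then takes the explicit product form $\Delta_j=(\widetilde A_{j_0}\cdots\widetilde A_{j-1})^\top\Delta_{j_0}A_c^{\,j-j_0}$, which is the same mechanism the paper later exploits in Lemma~\ref{lem:uniform-product}. The paper's route is the generic one for any $C^1$ fixed-point iteration with a contractive derivative. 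Yours is more elementary and uses the specific algebraic structure of $\mathcal R$.

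Two small remarks.
\begin{itemize}
\item The step you flag as the main obstacle is easy. Invertibility of $I+WP_{\rm ss}$ together with $S_j\to P_{\rm ss}$ already gives $\widetilde A_j\to A_c$ by continuity of matrix inversion. The uniform bound on $(I+WS_j)^{-1}$ is therefore unnecessary, and for $j<j_0$ only boundedness of the orbit enters. The bound is in fact true, via $\sigma_{\min}\ge|\det|/\sigma_{\max}^{n-1}$, but you do not need it.
\item Let $r_c$ denote the spectral radius of $A_c$. Your argument only uses $\sqrt{\rho_P}>r_c$, so any $\rho_P\in(r_c^2,1)$ works, not just $\rho_P>r_c$. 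This is the same rate the paper's linearization yields. Also, no dual norm is needed for the left product, since $\|M^\top\|_2=\|M\|_2$.
\end{itemize}
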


\begin{proof}
	Under Assumption~\ref{ass:psd-W}, Proposition~\ref{prop:riccati-convergence}
		gives \(S_j\to P_{\rm ss}\) and shows that \(A_c\) is Schur.
	Define $H(P):=(I+PW)^{-1}P$.
	Differentiating \(H\) at \(P_{\rm ss}\) and using
	\((I+P_{\rm ss}W)^{-1}P_{\rm ss}=P_{\rm ss}(I+WP_{\rm ss})^{-1}\)
	gives
	$DH(P_{\rm ss})[\Delta]=(I+P_{\rm ss}W)^{-1}\Delta(I+WP_{\rm ss})^{-1}$.
	Since \(\mathcal R(P)=Q+A^\top H(P)A\), we have
		$D\mathcal R(P_{\rm ss})[\Delta]=
		A_c^\top\Delta A_c$.
        
	Since \(A_c\) is Schur, the
	linear map \(\Delta\mapsto A_c^\top\Delta A_c\) has spectral radius smaller
	than $1$ on \(\mathbb S^n\), where \(\mathbb S^n\) is the space of real \(n\times n\) symmetric matrices. Hence there exists an equivalent norm
	\(\|\cdot\|_*\) on \(\mathbb S^n\) and \(\widehat\rho\in(0,1)\) such that
	$\|D\mathcal R(P_{\rm ss})[\Delta]\|_*
		\le
		\widehat\rho\|\Delta\|_*$.
        
	The map \(\mathcal R\) is differentiable in a neighborhood of
	\(P_{\rm ss}\). Therefore, for any \(\theta\in(\widehat\rho,1)\), there
	exists \(\varepsilon_R>0\) such that
	\begin{equation}
		\|\mathcal R(P_{\rm ss}+\Delta)-P_{\rm ss}\|_*
		\le
		\theta\|\Delta\|_*,
		\qquad
		\|\Delta\|_*\le\varepsilon_R .
		\label{eq:Riccati-local-contraction}
	\end{equation}
	Since \(S_j\to P_{\rm ss}\), there exists \(J_R\) such that
	\(
	\|S_j-P_{\rm ss}\|_*\le\varepsilon_R,
	 j\ge J_R .
	\)
	Set \(\Delta_j:=S_j-P_{\rm ss}\). Then
	\(
	\Delta_{j+1}=\mathcal R(P_{\rm ss}+\Delta_j)-P_{\rm ss},
	\)
	and \eqref{eq:Riccati-local-contraction} gives
	\(
	\|\Delta_j\|_*
	\le
	\theta^{j-J_R}\|\Delta_{J_R}\|_*,
	 j\ge J_R .
	\)
	All norms on \(\mathbb S^n\) are equivalent, and the finitely many indices
	\(0\le j<J_R\) can be absorbed into the constant. Choosing any
	\(\rho_P\in(\theta,1)\) gives \eqref{eq:riccati-exp-tail}.
\end{proof}

For \(S\in\mathscr S\), define
$A_c(S):=(I+WS)^{-1}A$. The map \(S\mapsto A_c(S)\) extends the limiting closed-loop matrix \(A_c\) to the finite-horizon Riccati matrices.  Lemma~\ref{lem:closed-tail-margin} gives uniform lower bounds for the inverses appearing in this map.  In particular, for \(S,\widetilde S\in\mathscr S\),
\begin{equation*}
		A_c(S)-A_c(\widetilde S)
		=-(I+WS)^{-1}W(S-\widetilde S)(I+W\widetilde S)^{-1}A .
\end{equation*}
Hence there exists \(L_A>0\) such that
\begin{equation}
	\|A_c(S)-A_c(\widetilde S)\|
	\le
	L_A\|S-\widetilde S\|,
	\qquad S,\widetilde S\in\mathscr S .
	\label{eq:AcS-Lipschitz}
\end{equation}

Let \(H_L=H_L^\top\succ0\) be the unique solution of $A_c^\top H_LA_c-H_L=-I$.
Such \(H_L\) exists because \(A_c\) is Schur. Since the map \(G\mapsto G^\top H_LG-H_L\) is continuous at \(A_c\), there exists \(\delta_L>0\) such that, for every \(G\in\mathbb R^{n\times n}\),
\begin{equation}
	\|G-A_c\|\le\delta_L
	\quad\Longrightarrow\quad
	G^\top H_LG-H_L\preceq-\frac12 I .
	\label{eq:common-lyap-window}
\end{equation}
Using \eqref{eq:riccati-exp-tail} and \eqref{eq:AcS-Lipschitz}, choose
\(J_L\) such that
\(
L_AC_P\rho_P^j\le\delta_L, j\ge J_L .
\)
Then every factor \(A_c(S_j)\) with \(j\ge J_L\) satisfies the Lyapunov inequality in \eqref{eq:common-lyap-window}.

{For \(0\le s\le t\le N\), define the transition matrix}
\begin{equation}
	\Phi_N(t,s):=
	\begin{cases}
		A_c(P_t)A_c(P_{t-1})\cdots A_c(P_{s+1}),
		& t>s,\\
		I, & t=s .
	\end{cases}
	\label{eq:PhiN-def}
\end{equation}
{The next lemma shows the product estimate used in both the mean and covariance turnpike analyses.}

\begin{lemma}
	\label{lem:uniform-product}
	{Under Assumptions \ref{ass:psd-W} and \ref{ass:riccati-penalty},}
	there exist constants \(C_\Phi>0\) and \(\rho\in(0,1)\), independent
	of \(N,s,t\), such that, for every \(0\le s\le t\le N\),
	\begin{equation}
		\|\Phi_N(t,s)\|
		\le
		C_\Phi\rho^{t-s} .
		\label{eq:uniform-product-bound}
	\end{equation}
\end{lemma}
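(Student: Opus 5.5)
We split the product according to whether the factors are close to $A_c$. Since $P_r=S_{N-r}$, the factor $A_c(P_r)$ equals $A_c(S_{N-r})$. By the choice of $J_L$ after \eqref{eq:common-lyap-window}, every factor with $N-r\ge J_L$, i.e.\ $r\le N-J_L$, satisfies $G^\top H_LG-H_L\preceq-\tfrac12 I$. Only the at most $J_L$ factors with $r>N-J_L$, lying in the terminal layer, may fail this.

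\emph{Step 1: the good factors contract in the $H_L$-norm.} Let $\|x\|_{H_L}:=(x^\top H_Lx)^{1/2}$ and $h:=\lambda_{\max}(H_L)$. For any $G$ satisfying \eqref{eq:common-lyap-window},
\[
\|Gx\|_{H_L}^2\le \|x\|_{H_L}^2-\tfrac12\|x\|^2\le\bigl(1-\tfrac1{2h}\bigr)\|x\|_{H_L}^2 .
\]
Put $\rho:=(1-1/(2h))^{1/2}\in(0,1)$; this is less than $1$ because $H_L\succeq I$, which follows from $H_L=\sum_k (A_c^\top)^kA_c^k$. Then any product of $\ell$ good factors has $H_L$-induced norm at most $\rho^\ell$. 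Its Euclidean norm is therefore at most $\kappa\rho^\ell$, where $\kappa:=(\lambda_{\max}(H_L)/\lambda_{\min}(H_L))^{1/2}$.

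\emph{Step 2: the bad factors are uniformly bounded.} By Lemma~\ref{lem:closed-tail-margin}, $\sigma_{\min}(I+WS)\ge\delta$ for every $S\in\mathscr S$. Hence $\|A_c(S)\|\le\delta^{-1}\|A\|=:a$ uniformly over $\mathscr S$. Without loss of generality $a\ge1$.

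\emph{Step 3: combine the two bounds.} For $0\le s\le t\le N$, write $\Phi_N(t,s)$ as a product over indices $r\in\{s+1,\dots,t\}$. Split these indices into those with $r\le N-J_L$ (good) and the at most $J_L$ with $r>N-J_L$ (bad). Because the bad indices form the final segment $\{\max(s+1,N-J_L+1),\dots,t\}$, the product factors as
\[
\Phi_N(t,s)=(\text{bad block})\cdot(\text{good block}).
\]
The good block has $\ell\ge (t-s)-J_L$ factors. Therefore
\[
\|\Phi_N(t,s)\|\le a^{J_L}\kappa\rho^{\ell}\le a^{J_L}\kappa\rho^{-J_L}\rho^{t-s}.
\]
Set $C_\Phi:=\kappa(a/\rho)^{J_L}$. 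The constants $J_L$, $a$, $\kappa$ and $\rho$ depend only on the data, $Q_f$ and the Riccati tail, not on $N$, $s$ or $t$. The case $t=s$ is trivial since $\Phi_N(s,s)=I$ and $C_\Phi\ge1$.

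The main point needing care is the ordering of factors and the horizon-uniformity of $J_L$. The ordering works because the non-contracting factors always sit at the terminal end of the horizon. The uniformity of $J_L$ holds because it is defined via the single sequence $S_j$, which does not depend on $N$; this is exactly what Lemma~\ref{lem:riccati-tail-exp} provides. Assumption~\ref{ass:riccati-penalty} enters only through Lemma~\ref{lem:closed-tail-margin}, which supplies the uniform bound $a$ on the finitely many exceptional factors.
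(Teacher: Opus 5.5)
Your proof is correct and follows essentially the same route as the paper: the factors $A_c(S_j)$ with $j\ge J_L$ contract in the $H_L$-norm by $(1-1/(2\lambda_{\max}(H_L)))^{1/2}$, and the at most $J_L$ terminal factors are uniformly bounded, which gives $C_\Phi$ of the form (bound$/\rho)^{J_L}$ times a norm-equivalence factor. The only cosmetic difference is how the exceptional factors are handled: you bound them in the Euclidean norm via $\sigma_{\min}(I+WS)\ge\delta$ and use that they form a contiguous left block of the product, whereas the paper bounds every factor in the $H_L$-induced norm using compactness of $\mathscr S$, so the order of the factors plays no role there.
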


\begin{proof}
	{The detailed proof is given in
	Appendix~\ref{app:lem4}.}
\end{proof}

\subsection{Mean turnpike estimate}
\label{subsec:terminal-adjoint-layer}

We now apply the product estimate \eqref{eq:uniform-product-bound} to the discrepancy estimate \eqref{eq:shifted-ham-bvp}.
Define $r_t:=q_t-P_te_t$.
At the terminal stage,
$r_N=Q_fm^e-p^e=:r^e$.

The auxiliary variable \(r_t\) satisfies a backward recursion, while \(e_t\) satisfies a forward recursion driven by \(r_t\).  This separation will be the key ingredient for the two-sided mean estimate. By definition,
\(q_{t+1}=P_{t+1}e_{t+1}+r_{t+1}.\)
Substituting this into the first equation in \eqref{eq:shifted-ham-bvp}
gives $(I+WP_{t+1})e_{t+1}=Ae_t-Wr_{t+1}$.
By Lemma~\ref{lem:closed-tail-margin}, \(I+WP_{t+1}\) is invertible. Hence
\begin{equation}
	\begin{aligned}
		e_{t+1}=
		A_c(P_{t+1})e_t
		-(I+WP_{t+1})^{-1}Wr_{t+1} .
	\end{aligned}
	\label{eq:e-driven1}
\end{equation}

Next, using the second equation in \eqref{eq:shifted-ham-bvp},
we obtain
$q_t=Qe_t+A^\top P_{t+1}e_{t+1}+A^\top r_{t+1}$.
Substituting \eqref{eq:e-driven1} yields
\begin{align*}
	q_t
	&=
	\Big[
	Q+A^\top P_{t+1}(I+WP_{t+1})^{-1}A
	\Big]e_t
	\notag\\
	&\quad+
	A^\top
	\Big[
	I-P_{t+1}(I+WP_{t+1})^{-1}W
	\Big]r_{t+1}.
\end{align*}
The coefficient of \(e_t\) is \(P_t\), since the Riccati recursion is (\ref{eq:P-recursion-affine}),
the same equality gives
$$
A^\top
	\Big[
	I-P_{t+1}(I+WP_{t+1})^{-1}W
	\Big] 
	=
	A_c(P_{t+1})^\top.
$$
Therefore,
\(
q_t
=
P_te_t+A_c(P_{t+1})^\top r_{t+1}.
\)
Subtracting \(P_te_t\) gives the backward recursion
\begin{equation}
	r_t
	=
	A_c(P_{t+1})^\top r_{t+1},
	\qquad t=0,\ldots,N-1 .
	\label{eq:r-backward}
\end{equation}

Using \eqref{eq:e-driven1} and \eqref{eq:r-backward}, the next theorem gives the two-sided mean turnpike estimate. 

\begin{theorem}
	\label{thm:affine-mean-turnpike}
	Under Assumptions~\ref{ass:stationary-empirical}, 
	\ref{ass:psd-W},
	and \ref{ass:riccati-penalty}, there exist constants
	\(C_m>0\), \(C_u>0\),  and \(\rho\in(0,1)\), independent of
	\(N\) and \(t\), such that for every \(N\ge 1\) and every
	\(t=0,\ldots,N\),
	\begin{equation}
		\begin{aligned}
			&\|m_t^*-m^e\|
			+\|p_t^*-p^e\|       \\
			&\qquad\le
			C_m
			\Big(
			\rho^t\|m_0-m^e\|
			+
			\rho^{N-t}\|Q_fm^e-p^e\|
			\Big).
		\end{aligned}
		\label{eq:two-sided-mp}
	\end{equation}
	Moreover, for \(t=0,\ldots,N-1\),
	\begin{equation}
		\begin{aligned}
			&\|\bar{u}_t^*-u^e\|
			+\|\bar{w}_t^*-\bar w^e\|       \\
			&\qquad\le
			C_u
			\Big(
			\rho^t\|m_0-m^e\|
			+
			\rho^{N-t-1}\|Q_fm^e-p^e\|
			\Big).
		\end{aligned}
		\label{eq:two-sided-uw}
	\end{equation}
\end{theorem}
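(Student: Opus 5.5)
The proof will rest on the decoupled pair \eqref{eq:e-driven1}--\eqref{eq:r-backward} together with the uniform product estimate of Lemma~\ref{lem:uniform-product}. We first solve the backward recursion \eqref{eq:r-backward} explicitly. Iterating from $r_N=r^e$ gives
\[
r_t=A_c(P_{t+1})^\top\cdots A_c(P_N)^\top r^e=\Phi_N(N,t)^\top r^e ,
\]
so Lemma~\ref{lem:uniform-product} yields $\|r_t\|\le C_\Phi\rho^{N-t}\|r^e\|$ for $0\le t\le N$. This is the terminal-layer decay, and it is uniform in $N$.

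Next we feed this into the forward recursion \eqref{eq:e-driven1} and use variation of constants:
\[
e_t=\Phi_N(t,0)e_0-\sum_{s=1}^{t}\Phi_N(t,s)(I+WP_s)^{-1}Wr_s .
\]
Lemma~\ref{lem:closed-tail-margin} bounds $\|(I+WP_s)^{-1}\|\le\delta^{-1}$ uniformly. Hence
\[
\|e_t\|\le C_\Phi\rho^t\|e_0\|+C_\Phi^2\delta^{-1}\|W\|\,\|r^e\|\sum_{s=1}^t\rho^{t-s}\rho^{N-s}.
\]
The convolution sum is the main point to handle carefully. Writing $\rho^{t-s}\rho^{N-s}=\rho^{N-t}\rho^{2(t-s)}$, the sum is bounded by $\rho^{N-t}/(1-\rho^2)$. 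This gives $\|e_t\|\le C(\rho^t\|e_0\|+\rho^{N-t}\|r^e\|)$ with no loss of rate and no $N$-dependence.

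For the adjoint we use $q_t=P_te_t+r_t$. The matrices $P_t=S_{N-t}$ lie in $\mathscr S$, which is bounded by Proposition~\ref{prop:riccati-convergence}, so $\|P_t\|\le \bar P$ uniformly. Therefore $\|q_t\|\le\bar P\|e_t\|+\|r_t\|$ obeys the same bound, and this proves \eqref{eq:two-sided-mp}.

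For \eqref{eq:two-sided-uw} we subtract \eqref{eq:ue-we} from \eqref{eq:mean-u-w-affine}:
\[
\bar u_t^*-u^e=-R^{-1}B^\top q_{t+1},\qquad \bar w_t^*-\bar w^e=\lambda^{-1}\Xi^\top q_{t+1}.
\]
Applying \eqref{eq:two-sided-mp} at index $t+1$ gives the factors $\rho^{t+1}\le\rho^t$ and $\rho^{N-t-1}$, which is exactly the claimed form with $C_u=(\|R^{-1}B^\top\|+\lambda^{-1}\|\Xi\|)C_m$.

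The rate $\rho$ is the one from Lemma~\ref{lem:uniform-product}, possibly enlarged slightly if a common rate is needed. Since the product bound itself carries the genuine difficulty (and is assumed), the only delicate point here is checking that the indexing of $\Phi_N(N,t)^\top$ matches \eqref{eq:PhiN-def}, where factors $A_c(P_{t+1}),\dots,A_c(P_N)$ appear, and that the summation does not degrade the exponent.
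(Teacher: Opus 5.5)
Your proof is correct and follows the paper's argument essentially step for step. You solve the backward recursion as $r_t=\Phi_N(N,t)^\top r^e$, use variation of constants for $e_t$ with the convolution sum bounded by $\rho^{N-t}/(1-\rho^2)$, bound $q_t=P_te_t+r_t$ via boundedness of $\mathscr S$, and shift to index $t+1$ for the mean control and disturbance. The only difference is cosmetic: you bound $\|(I+WP_s)^{-1}W\|$ by $\delta^{-1}\|W\|$ from Lemma~\ref{lem:closed-tail-margin}, where the paper simply names the supremum $M_W$.
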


\begin{proof}
	{The detailed proof is given in
	Appendix~\ref{app:thm2}.}
\end{proof}

The exponential estimate also implies a bound on the number of stage indices at which the mean state, mean control input, and mean of the
worst-case disturbance distribution lie outside a prescribed neighborhood of the static mean reference. This corresponds to the so-called measure turnpike property \cite{BerberichEtAl2018}.

\begin{corollary}
	\label{cor:measure-turnpike-affine}
	{Under Assumptions~\ref{ass:stationary-empirical},
	\ref{ass:psd-W}, 
	and \ref{ass:riccati-penalty},} {let \(C_m,C_u>0\) and \(\rho\in(0,1)\)
			be constants for which \eqref{eq:two-sided-mp}--\eqref{eq:two-sided-uw}
			hold.} For \(\varepsilon>0\) and \(a\ge0\), define
	\begin{equation*}
		L_\varepsilon(a)
		:=
		1+
		\left\lceil
		\frac{\log_+(4a/\varepsilon)}{-\log\rho}
		\right\rceil,
		\quad
		\log_+(b):=\log(\max\{b,1\}).
	\end{equation*}
	For a fixed initial mean \(m_0\), set
	\begin{equation*}
		\begin{aligned}
			\Lambda_\varepsilon(m_0)
			:={}&
			L_\varepsilon(C_m\|m_0-m^e\|)
			+
			L_\varepsilon(C_m\|r^e\|)       \\
			&+
			L_\varepsilon(C_u\|m_0-m^e\|)
			+
			L_\varepsilon(C_u\|r^e\|).
		\end{aligned}
	\end{equation*}
	Then, for all \(N\ge 1\),
	\begin{equation}
		\begin{aligned}
			\operatorname{card}\Big\{0\le t\le N-1:
			&\|m_t^*-m^e\|
			+\|\bar{u}_t^*-u^e\|        \\
			&+\|\bar{w}_t^*-\bar w^e\|
			>\varepsilon
			\Big\}
			\le
			\Lambda_\varepsilon(m_0).
		\end{aligned}
		\label{eq:measure-turnpike-cardinality}
	\end{equation}
    where $\mathrm{card}$ denotes the cardinality of a set. 
\end{corollary}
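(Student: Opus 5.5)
The corollary follows from Theorem~\ref{thm:affine-mean-turnpike} by a counting argument. We split the bound into four exponential terms and show that each can exceed $\varepsilon/4$ only on a bounded number of indices.

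For $t\in\{0,\ldots,N-1\}$, adding \eqref{eq:two-sided-mp} (dropping the nonnegative adjoint term) and \eqref{eq:two-sided-uw} gives
\begin{equation*}
\begin{aligned}
&\|m_t^*-m^e\|+\|\bar u_t^*-u^e\|+\|\bar w_t^*-\bar w^e\|\\
&\quad\le a_1\rho^t+a_2\rho^{N-t}+a_3\rho^t+a_4\rho^{N-t-1},
\end{aligned}
\end{equation*}
with $a_1=C_m\|m_0-m^e\|$, $a_2=C_m\|r^e\|$, $a_3=C_u\|m_0-m^e\|$ and $a_4=C_u\|r^e\|$. If the left side exceeds $\varepsilon$, then at least one of the four terms exceeds $\varepsilon/4$. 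The exceptional set is therefore contained in the union of the four sets $T_j:=\{t: \text{$j$-th term}>\varepsilon/4\}$, and its cardinality is at most $\sum_j\operatorname{card} T_j$.

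Each $T_j$ is handled by the same elementary claim. For $a\ge0$, the set of integers $\tau\ge0$ with $a\rho^\tau>\varepsilon/4$ has at most $L_\varepsilon(a)$ elements. To see this, note that if $a\le\varepsilon/4$ the set is empty. Otherwise $a\rho^\tau>\varepsilon/4$ is equivalent to $\tau<\log(4a/\varepsilon)/(-\log\rho)$, so the admissible $\tau$ lie in $\{0,1,\ldots,\lceil \log_+(4a/\varepsilon)/(-\log\rho)\rceil-1\}$. This set has at most $\lceil\cdot\rceil\le L_\varepsilon(a)-1$ elements, so the bound holds with one to spare.

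Apply the claim with $\tau=t$ for the initial-layer terms $j=1,3$. For the terminal-layer terms use $\tau=N-t$ when $j=2$ and $\tau=N-t-1$ when $j=4$. In each case the map $t\mapsto\tau$ is injective from $\{0,\ldots,N-1\}$ into the nonnegative integers, so $\operatorname{card}T_j\le L_\varepsilon(a_j)$. Summing over $j$ gives exactly $\Lambda_\varepsilon(m_0)$, uniformly in $N$.

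The only delicate point is the boundary bookkeeping. One must check that the shifted exponents $N-t$ and $N-t-1$ stay nonnegative on $0\le t\le N-1$, so the claim applies, and that the ceiling count is not off by one. The extra $+1$ built into $L_\varepsilon$ covers any such slack. Everything else is immediate from Theorem~\ref{thm:affine-mean-turnpike}.
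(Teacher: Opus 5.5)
Your proposal is correct and follows essentially the same argument as the paper: bound the off-turnpike quantity by the four exponential terms from Theorem~\ref{thm:affine-mean-turnpike}, note that one of them must exceed $\varepsilon/4$, count the indices $\tau\ge0$ with $a\rho^\tau>\varepsilon/4$, and treat the terminal terms through the changes of variables $\tau=N-t$ and $\tau=N-t-1$. Your explicit check that at most $\lceil\log_+(4a/\varepsilon)/(-\log\rho)\rceil\le L_\varepsilon(a)-1$ indices qualify, and that both substitutions are injective into the nonnegative integers, fills in details the paper leaves implicit.
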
 

\begin{proof}
	For \(0\le t\le N-1\), the left-hand side inside the set in	\eqref{eq:measure-turnpike-cardinality} is bounded by the sum of four
	nonnegative terms:
	\[
	\begin{array}{ll}
		T_1(t)=C_m\rho^t\|m_0-m^e\|,
		&
		T_2(t)=C_m\rho^{N-t}\|r^e\|,\\[1mm]
		T_3(t)=C_u\rho^t\|m_0-m^e\|,
		&
		T_4(t)=C_u\rho^{N-t-1}\|r^e\|.
	\end{array}
	\]
	If the sum is larger than \(\varepsilon\), then at least one of these four
	terms is larger than \(\varepsilon/4\).
	
	For a term of the form \(a\rho^t\), the inequality
	\(a\rho^t>\varepsilon/4\) holds for at most \(L_\varepsilon(a)\)
	nonnegative integers \(t\). This bounds the stage indices associated with
	\(T_1\) and \(T_3\). For the terminal terms, the changes of variables
	\(j=N-t\) and \(j=N-t-1\) give the corresponding bounds for \(T_2\) and
	\(T_4\). Adding the four bounds proves
	\eqref{eq:measure-turnpike-cardinality}.
\end{proof}

\begingroup
\begin{remark}
Corollary~\ref{cor:measure-turnpike-affine} uses the pointwise estimate in
Theorem~\ref{thm:affine-mean-turnpike} to obtain an upper bound on the number
of off-turnpike stages which is
independent of \(N\). Hence, increasing \(N\) enlarges the interior part of
the horizon without increasing the bound on the number of off-turnpike
stages.
\end{remark}
\endgroup

\subsection{Covariance turnpike}
\label{subsec:covariance-turnpike}

Define the closed-loop state covariance by
\begin{equation*}
    X_t^*:=\mathbb E^*\big[(x_t^*-m_t^*)(x_t^*-m_t^*)^\top\big].
\end{equation*}
Under
Assumption~\ref{ass:stationary-empirical}, \(X_0\) is given by
\eqref{eq:initial-ensemble-moments}. For a deterministic initial state, \(X_0=0\).

Under the optimal policy pair and conditional on
\(x_t^*=x\), the distribution of \(x_{t+1}^*\) assigns mass \(1/M\) to each of the points 
$x_{t+1}^*=Ax+B u_t^*(x)+\Xi w_t^{*,i}(x)$,
for \( i=1,\ldots,M\). Hence the conditional mean of $x_{t+1}^*$ is
\begin{equation}
	\begin{aligned}
		\mathbb E^*
		\big[
		x_{t+1}^*
		\big| x_t^*=x
		\big] =
		Ax+B u_t^*(x)
		+\Xi\gamma_t(x).
	\end{aligned}
	\label{eq:psi-def}
\end{equation}



We next identify the linear part of this conditional mean. Substituting the optimal control \eqref{eq:affine-control} and the worst-case support points \eqref{eq:wstar-affine-state} into \eqref{eq:psi-def} gives
\[
\begin{aligned}
	&\mathbb E^*
	\big[
	x_{t+1}^*
	\big| x_t^*=x
	\big] \\
    =&
	Ax-W(I\!+\!P_{t+1}W)^{-1}
	\big(
	P_{t+1}Ax
	+P_{t+1}\Xi d
	+s_{t+1}
	\big)
	+\Xi d .
\end{aligned}
\]
Therefore, for any \(x,y\in\mathbb R^n\),
\begin{equation*}
		\mathbb E^*
		\big[
		x_{t+1}^*
		\big| x_t^*=x
		\big]
		-
		\mathbb E^*
		\big[
		x_{t+1}^*
		\big| x_t^*=y
		\big]=
		A_c(P_{t+1})(x-y).
\end{equation*}
Here we used
\(
I-W(I+P_{t+1}W)^{-1}P_{t+1}
=
(I+WP_{t+1})^{-1}.
\)

Since the conditional mean is affine in the current state, evaluating
it at \(x_t^*\) and \(m_t^*\), respectively, gives
\begin{equation}
	\begin{aligned}
		&\mathbb E^*
		\big[
		x_{t+1}^*
		\big| x_t^*
		\big]
		-
		m_{t+1}^*  =
		A_c(P_{t+1})
		(x_t^*-m_t^*).
	\end{aligned}
	\label{eq:conditional-mean-centered}
\end{equation}

{We now compute the conditional centered second moment.}
For fixed \(x\), it follows from  (\ref{eq:psi-def}) that
\[
x_{t+1}^*-\mathbb E^*
		\big[
		x_{t+1}^*
		\big| x_t^*=x
		\big] =
	\Xi
	\big(
	w_t^{*,i}(x)-\gamma_t(x)
	\big).
\]
Thus, by the definition of conditional covariance,
\begin{equation}
	\begin{aligned}
		\operatorname{Cov}^{*}
		\big(
		x_{t+1}^*
		\big|
		x_t^*=x
		\big) 
		:=&
		\mathbb E^*
		\Big[
		\big(
		x_{t+1}^*
		-
		\mathbb E^*
		[
		x_{t+1}^*
		\big|
		x_t^*=x
		]
		\big) \\
		&\!\!\!\!\!\!\!\!\!\!\!\!\!\!\!\!\!\!\!\!\!\!\!\!\!\!\!\!\!\!\!\!\!\times
		\big(
		x_{t+1}^*
		-
		\mathbb E^*
		[
		x_{t+1}^*
		\big|
		x_t^*=x
		]
		\big)^\top
		\,\Big|\,
		x_t^*=x
		\Big] \\
		&\!\!\!\!\!\!\!\!\!\!\!\!\!\!\!\!\!\!\!\!\!\!\!\!\!\!\!\!\!\!\!\!\!\!\!\!\!\!\!\!\!\!\!\!\!\!\!\!\!\!\!\!\!\!\!\!\!\!\!\!\!\!=
		\frac1M\sum_{i=1}^M
		\Xi
		\big(w_t^{*,i}(x)-\gamma_t(x)\big)\big(w_t^{*,i}(x)-\gamma_t(x)\big)^\top
		\Xi^\top .
	\end{aligned}
	\label{eq:eta-conditional-cov}
\end{equation}
By Assumption~\ref{ass:stationary-empirical},
\(
\frac1M\sum_{i=1}^M(
\widehat w_t^{(i)}-d)=0,
\frac1M\sum_{i=1}^M
(\widehat w_t^{(i)}-d)(\widehat w_t^{(i)}-d)^\top
=
\Sigma .
\)
Define
\begin{align}\label{eq:DP}
	D(P)
	:={}&
	\lambda^2
	\Xi
	(\lambda I-\Xi^\top P\Xi)^{-1}
	\Sigma 
	(\lambda I-\Xi^\top P\Xi)^{-1}
	\Xi^\top,
\end{align}
then the centered support points have zero mean and the induced covariance satisfies
\begingroup
$$
\frac1M\sum_{i=1}^M
		\Xi\big(w_t^{*,i}(x)-\gamma_t(x)\big)
		\big(w_t^{*,i}(x)-\gamma_t(x)\big)^\top\Xi^\top=D(P_{t+1}).
$$
We now derive the covariance recursion.
Write
\begin{equation*}
	\begin{aligned}
		&x_{t+1}^*-m_{t+1}^*\\
		=&
		\Big(
		\mathbb E^*
		[
		x_{t+1}^*
		\big|
		x_t^*
		]
		-
		m_{t+1}^*
		\Big) +
		\Big(
		x_{t+1}^*
		-
		\mathbb E^*
		[
		x_{t+1}^*
		\big|
		x_t^*
		]
		\Big),
	\end{aligned}
\end{equation*}
in which the first term is measurable with respect to \(x_t^*\), while the second term has zero conditional mean given \(x_t^*\). Hence the cross terms vanish after taking expectations.


By \eqref{eq:conditional-mean-centered}, the contribution of the centered
conditional mean is
\[
\begin{aligned}
	&\mathbb E^*\Big[\big(\mathbb E^*[x_{t+1}^*\big|x_t^*]-m_{t+1}^*\big)\big(\mathbb E^*[x_{t+1}^*\big|x_t^*]-m_{t+1}^*\big)^\top\Big] \\
	=&\mathbb E^*\Big[A_c(P_{t+1})(x_t^*-m_t^*) (x_t^*-m_t^*)^\top A_c(P_{t+1})^\top\Big] \\
	=&A_c(P_{t+1})X_t^*A_c(P_{t+1})^\top .
\end{aligned}
\]
The contribution of the conditional fluctuation is
\[
\begin{aligned}
	&\mathbb E^*
	\Big[\big(x_{t+1}^*-\mathbb E^*[x_{t+1}^*\big|x_t^*]\big)
	\big(x_{t+1}^*-	\mathbb E^*[x_{t+1}^*\big|x_t^*]\big)^\top\Big] \\
	=&\mathbb E^*\left[\operatorname{Cov}^{*}\big(x_{t+1}^*\big|x_t^*\big)\right] \\
	=&D(P_{t+1}),
\end{aligned}
\]
where the last equality uses \eqref{eq:eta-conditional-cov}. Combining the two gives
\begin{equation}
	X_{t+1}^*
	=
	A_c(P_{t+1})
	X_t^*
	A_c(P_{t+1})^\top
	+
	D(P_{t+1}).
	\label{eq:cov-recursion}
\end{equation}

Replacing \(P_{t+1}\) by its limit \(P_{\rm ss}\) in \eqref{eq:cov-recursion} gives the reference covariance.  Define \(X^e\) by
\begin{equation}
	X^e
	=
	A_cX^eA_c^\top+D(P_{\rm ss}),
	\qquad
	A_c=(I+WP_{\rm ss})^{-1}A .
	\label{eq:Xe-Lyapunov}
\end{equation}
Since \(A_c\) is Schur, this equation has a unique symmetric positive semidefinite solution.

The next theorem gives a two-sided covariance turnpike estimate. The
initial term is determined by \(X_0-X^e\), whereas the terminal term reflects
the deviation of \(P_{t+1}\) from \(P_{\rm ss}\) near the terminal stage.
\begin{theorem}
	\label{thm:cov-turnpike}
	Under Assumptions~\ref{ass:stationary-empirical}, \ref{ass:psd-W}, and
	\ref{ass:riccati-penalty}, there exist constants \(C_X>0\)
	 and \(\rho_X\in(0,1)\), independent of \(N\) and \(t\), such
	that for every \(N\ge 1\) and every \(t=0,\ldots,N\),
	\begin{equation}
		\|X_t^*-X^e\|
		\le
		C_X
		\left(
		\rho_X^t\|X_0-X^e\|
		+
		\rho_X^{N-t}
		\right).
		\label{eq:cov-turnpike-bound}
	\end{equation}
\end{theorem}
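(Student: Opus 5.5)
Define the error $E_t:=X_t^*-X^e$. Subtracting \eqref{eq:Xe-Lyapunov} from \eqref{eq:cov-recursion} and writing $A_t:=A_c(P_{t+1})$ gives
\[
E_{t+1}=A_tE_tA_t^\top+F_t,
\]
where
\[
F_t:=A_tX^eA_t^\top-A_cX^eA_c^\top+D(P_{t+1})-D(P_{\rm ss}).
\]
This is a linear time-varying Lyapunov recursion driven by $F_t$. Its homogeneous transition is $E\mapsto \Phi_N(t,s)E\,\Phi_N(t,s)^\top$, where $\Phi_N$ is exactly the transition matrix of \eqref{eq:PhiN-def}. Unrolling from $0$ gives the variation-of-constants formula
\[
E_t=\Phi_N(t,0)E_0\Phi_N(t,0)^\top+\sum_{s=0}^{t-1}\Phi_N(t,s+1)F_s\Phi_N(t,s+1)^\top .
\]
Lemma~\ref{lem:uniform-product} then bounds the first term by $C_\Phi^2\rho^{2t}\|X_0-X^e\|$.

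The forcing term is controlled as follows.
\begin{itemize}
\item By \eqref{eq:AcS-Lipschitz}, $\|A_t-A_c\|\le L_A\|P_{t+1}-P_{\rm ss}\|$.
\item By Lemma~\ref{lem:closed-tail-margin}, $\|A_t\|$ is bounded uniformly over $\mathscr S$. Hence the first difference in $F_t$ is at most a constant times $\|P_{t+1}-P_{\rm ss}\|\,\|X^e\|$.
\item $D$ is Lipschitz on $\mathscr S$. This follows from $Y^{-1}-\widetilde Y^{-1}=Y^{-1}(\widetilde Y-Y)\widetilde Y^{-1}$ with $Y=\lambda I-\Xi^\top P\Xi\succeq\delta I$, using the uniform margin of Lemma~\ref{lem:closed-tail-margin}.
\end{itemize}
Together these give $\|F_s\|\le C_F\|P_{s+1}-P_{\rm ss}\|$. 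Lemma~\ref{lem:riccati-tail-exp}, with $P_{s+1}=P_{N-(N-s-1)}$, then yields $\|F_s\|\le C_FC_P\rho_P^{N-s-1}$.

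The sum is therefore bounded by
\[
C_\Phi^2C_FC_P\sum_{s=0}^{t-1}\rho^{2(t-s-1)}\rho_P^{N-s-1}.
\]
Set $\bar\rho:=\max\{\rho^2,\rho_P\}$ and choose any $\rho_X\in(\bar\rho,1)$. Each summand is at most $\bar\rho^{(t-s-1)+(N-s-1)}$. Since $(t-s-1)+(N-s-1)\ge N-t$, each summand is at most $\bar\rho^{N-t}\,\bar\rho^{\,t-s-1}$. Summing the geometric tail over $s$ gives the bound $\bar\rho^{N-t}/(1-\bar\rho)$. The sum is thus at most $C\rho_X^{N-t}$, with no dependence on $t$ or $N$ beyond the exponent. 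Absorbing constants and using $\rho^{2t}\le\rho_X^t$ gives \eqref{eq:cov-turnpike-bound} for all $t=0,\ldots,N$. The case $t=N$ is covered trivially, since then the bound reads $C_X(\dots+1)$ and only boundedness of $E_N$ is needed, which the same formula provides.

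The main obstacle is already resolved by Lemma~\ref{lem:uniform-product}: the time-varying factors $A_c(P_{t+1})$ near the terminal layer need not be contractive individually, and we need a product bound uniform in $N$. What remains delicate is making sure all Lipschitz constants for $A_c(\cdot)$ and $D(\cdot)$ are uniform over the whole tail $\mathscr S$, including the early indices $j<J_L$ where $P_{t+1}$ may be far from $P_{\rm ss}$. This uniformity comes from the uniform invertibility margins of Lemma~\ref{lem:closed-tail-margin} and the boundedness of $\mathscr S$ from Proposition~\ref{prop:riccati-convergence}. The finitely many far-from-limit indices are absorbed into $C_P$ and $C_F$.
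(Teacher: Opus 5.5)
Your proposal is correct and follows essentially the same route as the paper: subtract the limiting Lyapunov equation, unroll the error recursion with the transition matrices $\Phi_N$, bound the forcing term via uniform Lipschitz estimates for $A_c(\cdot)$ and $D(\cdot)$ on $\mathscr S$ combined with the exponential Riccati tail, and sum the geometric series with $\rho_X\in(\max\{\rho^2,\rho_P\},1)$. The only cosmetic difference is that the paper rewrites the sum exactly as $\rho_P^{N-t}\sum_j(\rho^2\rho_P)^j$, whereas you bound each summand by powers of $\bar\rho$; both give the same estimate.
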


\begin{proof}
	{The detailed proof is given in
	Appendix~\ref{app:thm3}.}
\end{proof}

\begingroup
\begin{remark}
No terminal condition is imposed on the covariance recursion. Nevertheless,
the covariance estimate contains a terminal term because \(P_{t+1}\) differs
from \(P_{\rm ss}\) near the terminal stage. Consequently,
\(A_c(P_{t+1})\) differs from \(A_c\), and \(D(P_{t+1})\) differs from
\(D(P_{\rm ss})\). This terminal term therefore comes from the
finite-horizon Riccati matrices. Establishing the covariance turnpike requires a separate analysis of the covariance recursion and does not follow
directly from the mean turnpike estimate.
\end{remark}
\endgroup

\subsection{Gaussian moment surrogates}
\label{subsec:second-order-distributional}

As consequences of the mean and covariance turnpike estimates, one can derive bounds for quadratic functions of the closed-loop state and a 2-Wasserstein estimate for Gaussian moment surrogates defined by the same moments.

Fix a quadratic function 
\begin{align}\label{eq:quadratic}
    \varphi(x):=x^\top\varPhi x+2h^\top x+c,
\end{align}
where
\(\varPhi=\varPhi^\top\in\mathbb R^{n\times n}\),
\(h\in\mathbb R^n\), and \(c\in\mathbb R\) are fixed.
Define the corresponding reference value by
$\varphi^e:=(m^e)^\top\varPhi m^e
	+\operatorname{tr}(\varPhi X^e)
	+2h^\top m^e+c$.
The following proposition gives the corresponding turnpike estimate for
\(\mathbb E^*[\varphi(x_t^*)]\).

\begin{proposition}
	\label{prop:quadratic-observable-turnpike}
	Under Assumptions~\ref{ass:stationary-empirical}, \ref{ass:psd-W}, 
	and \ref{ass:riccati-penalty}, fix \(R_0>0\)
	and consider initial state distributions satisfying
	\begin{equation}
		\|m_0-m^e\|+\|X_0-X^e\|\le R_0 .
		\label{eq:initial-moment-R0-bound}
	\end{equation}
	Then for quadratic function \eqref{eq:quadratic}, there exist constants \(C_{\varphi,R_0}>0\)
	 and \(\rho_\varphi\in(0,1)\), independent of \(N,t\) and of
	the particular initial distribution satisfying \eqref{eq:initial-moment-R0-bound},
	such that for every \(N\ge 1\) and \(t=0,\ldots,N\),
	\begin{equation}
		\begin{aligned}
			&\left|
			\mathbb E^*[\varphi(x_t^*)]-\varphi^e
			\right|                                      \\
			\le&~
			C_{\varphi,R_0}
			\left[
			\rho_\varphi^t
			\big(
			\|m_0-m^e\|+\|X_0-X^e\|
			\big)
			+\rho_\varphi^{N-t}
			\right].
		\end{aligned}
		\label{eq:quadratic-observable-turnpike-bound}
	\end{equation}
\end{proposition}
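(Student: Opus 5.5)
The plan is to write the expectation of the quadratic function in terms of the first two moments and then apply Theorems~\ref{thm:affine-mean-turnpike} and \ref{thm:cov-turnpike}. Since $x_t^*$ has finite second moments under the optimal policy pair (the closed loop is affine with finitely supported disturbance increments and $\mu_0\in\mathcal P_2$), we have
\[
\mathbb E^*[\varphi(x_t^*)]=(m_t^*)^\top\varPhi m_t^*+\operatorname{tr}(\varPhi X_t^*)+2h^\top m_t^*+c .
\]
Subtracting $\varphi^e$ and writing $e_t=m_t^*-m^e$ gives
\[
\mathbb E^*[\varphi(x_t^*)]-\varphi^e
=e_t^\top\varPhi e_t+2(\varPhi m^e+h)^\top e_t+\operatorname{tr}\big(\varPhi(X_t^*-X^e)\big).
\]
Consequently,
\[
\big|\mathbb E^*[\varphi(x_t^*)]-\varphi^e\big|\le \|\varPhi\|\|e_t\|^2+2\|\varPhi m^e+h\|\|e_t\|+c_n\|\varPhi\|\|X_t^*-X^e\|,
\]
where $c_n$ is a norm-equivalence constant for the trace pairing.

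Next, bound each term with the earlier estimates. Theorem~\ref{thm:cov-turnpike} gives $\|X_t^*-X^e\|\le C_X(\rho_X^t\|X_0-X^e\|+\rho_X^{N-t})$. Estimate \eqref{eq:two-sided-mp} gives $\|e_t\|\le C_m(\rho^t\|m_0-m^e\|+\rho^{N-t}\|r^e\|)$, and $r^e=Q_fm^e-p^e$ is a fixed vector independent of $N$ and of the initial distribution. Together with \eqref{eq:initial-moment-R0-bound}, this yields the uniform bound $\|e_t\|\le C_m(R_0+\|r^e\|)=:E_{R_0}$. The quadratic term is therefore linearized as $\|e_t\|^2\le E_{R_0}\|e_t\|$. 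This is the only place the restriction $R_0$ enters, and it explains why the constant depends on $R_0$.

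Finally, set $\rho_\varphi:=\max\{\rho,\rho_X\}$. Collecting terms gives
\[
|\cdot|\le \big(\|\varPhi\|E_{R_0}+2\|\varPhi m^e+h\|\big)C_m\big(\rho_\varphi^t\|m_0-m^e\|+\rho_\varphi^{N-t}\|r^e\|\big)+c_n\|\varPhi\|C_X\big(\rho_\varphi^t\|X_0-X^e\|+\rho_\varphi^{N-t}\big).
\]
The $\|r^e\|$ factor is absorbed into the constant multiplying $\rho_\varphi^{N-t}$. This gives \eqref{eq:quadratic-observable-turnpike-bound} with $C_{\varphi,R_0}$ depending only on $\varPhi,h,m^e,r^e,R_0,C_m,C_X,n$.

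The argument is routine. The only delicate point is the quadratic term in the mean error: without the a priori bound \eqref{eq:initial-moment-R0-bound}, the estimate would pick up $\|m_0-m^e\|^2$ and a cross term $\rho^t\rho^{N-t}$, which cannot be reduced to the linear form. The uniform bound $E_{R_0}$ handles both.
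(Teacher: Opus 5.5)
Your proof is correct and follows the paper's route. You expand $\mathbb E^*[\varphi(x_t^*)]$ through $m_t^*$ and $X_t^*$, use the $R_0$-uniform bound on the mean error to make the quadratic term linear in $\|m_t^*-m^e\|$ (the paper's $\sup_{N,t}\|m_t^*\|\le B_{R_0}$ is equivalent), and then apply Theorems~\ref{thm:affine-mean-turnpike} and~\ref{thm:cov-turnpike} with $\rho_\varphi\ge\max\{\rho,\rho_X\}$. One small correction to your closing aside: the cross term satisfies $\rho^t\rho^{N-t}\|m_0-m^e\|\|r^e\|=\rho^N\|m_0-m^e\|\|r^e\|\le\rho^t\|r^e\|\,\|m_0-m^e\|$, so it is already of the linear form, and only the $\rho^{2t}\|m_0-m^e\|^2$ term genuinely needs the restriction \eqref{eq:initial-moment-R0-bound}.
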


\begin{proof}
	Since \(m_t^*=\mathbb E^*[x_t^*]\) and \(X_t^*\) is the
	covariance matrix,
	$\mathbb E^*[\varphi(x_t^*)]
			=
			(m_t^*)^\top \varPhi m_t^*
			+\operatorname{tr}(\varPhi X_t^*)
			+2h^\top m_t^*+c$.
	Subtracting $\varphi^e$ gives
	\begin{equation}
		\begin{aligned}
			\mathbb E^*[\varphi(x_t^*)]-\varphi^e
			={}&
			(m_t^*-m^e)^\top \varPhi (m_t^*+m^e)\\
			&\!\!\!\!\!\!\!\!\!\!+
			\operatorname{tr}\!\big(\varPhi (X_t^*-X^e)\big)
			+2h^\top(m_t^*-m^e).
		\end{aligned}
		\label{eq:quadratic-observable-difference-expansion}
	\end{equation}
	Based on the estimates \eqref{eq:two-sided-mp} and \eqref{eq:cov-turnpike-bound} in Theorem \ref{thm:affine-mean-turnpike} and \ref{thm:cov-turnpike},
	on the bounded set \eqref{eq:initial-moment-R0-bound} the means are
	uniformly bounded as
	\(
	\sup_{N,t}\|m_t^*\|\le B_{R_0}
	\)
	for some \(B_{R_0}<\infty\). Hence
	\eqref{eq:quadratic-observable-difference-expansion} gives
	\[
	\begin{aligned}
		\left|
		\mathbb E^*[\varphi(x_t^*)]-\varphi^e
		\right|        \le
		C_{\varPhi,h,R_0}
		\left(
		\|m_t^*-m^e\|
		+
		\|X_t^*-X^e\|
		\right).
	\end{aligned}
	\]
	Choosing any \(\rho_\varphi\in(\max\{\rho,\rho_X\},1)\) and increasing
	the constant gives \eqref{eq:quadratic-observable-turnpike-bound}.
\end{proof}

Let \(\mathcal N(m,X)\) denote the Gaussian probability measure on
\(\mathbb R^n\) with mean \(m\in\mathbb R^n\) and covariance
\(X\succeq0\). Define
\begin{equation*}
    \mathcal G_t
    :=
    \mathcal N(m_t^*,X_t^*),
    \qquad
    \mathcal G^e
    :=
    \mathcal N(m^e,X^e).
\end{equation*}
These probability measures are Gaussian moment surrogates that match the
finite-horizon and reference means and covariances, respectively. The actual
closed-loop state distributions are not assumed to be Gaussian. The preceding
mean and covariance turnpike estimates imply the following Wasserstein bound.

\begin{corollary}
    \label{cor:gaussian-surrogate-turnpike}
    Under Assumptions~\ref{ass:stationary-empirical},
    \ref{ass:psd-W}, and \ref{ass:riccati-penalty}, there exist constants
    \(C_G>0\) and \(\rho_G\in(0,1)\), independent of \(N\), \(t\), and the
    particular initial distribution, such that, for every \(N\ge1\) and
    \(t=0,\ldots,N\),
    \begin{equation}
        \begin{aligned}
            &W_2\bigl(\mathcal G_t,\mathcal G^e\bigr)\\
            \leq&
            C_G\Big[
                \rho_G^t
                \bigl(
                    \|m_0-m^e\|
                    +
                    \|X_0-X^e\|^{1/2}
                \bigr)+
                \rho_G^{N-t}
            \Big].
        \end{aligned}
        \label{eq:gaussian-surrogate-W2-bound}
    \end{equation}
\end{corollary}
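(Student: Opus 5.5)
The plan is to reduce the Wasserstein distance between the two Gaussian surrogates to the mean and covariance errors, and then insert the estimates of Theorems~\ref{thm:affine-mean-turnpike} and~\ref{thm:cov-turnpike}. For Gaussian measures the 2-Wasserstein distance has the closed form (Gelbrich/Bures formula)
\[
W_2^2\bigl(\mathcal N(m,X),\mathcal N(m',X')\bigr)
=\|m-m'\|^2+\operatorname{tr}\Bigl(X+X'-2\bigl(X'^{1/2}XX'^{1/2}\bigr)^{1/2}\Bigr).
\]
The Bures term equals $\min_U\|X^{1/2}-X'^{1/2}U\|_F^2$ over orthogonal $U$. Taking $U=I$ gives
\[
W_2(\mathcal G_t,\mathcal G^e)\le \|m_t^*-m^e\|+\|(X_t^*)^{1/2}-(X^e)^{1/2}\|_F .
\]
Alternatively, this bound follows directly from the coupling $x=m+X^{1/2}\xi$, $x'=m'+X'^{1/2}\xi$ with $\xi\sim\mathcal N(0,I)$, so the exact formula is not even needed.

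The next step is the Hölder-$1/2$ continuity of the matrix square root on positive semidefinite matrices:
\[
\|A^{1/2}-B^{1/2}\|\le\|A-B\|^{1/2}\qquad (A,B\succeq0,\ \text{operator norm}).
\]
I will justify this by a standard argument. Let $v$ be a unit eigenvector of $A^{1/2}-B^{1/2}$ for the eigenvalue $\mu$ of largest modulus, and assume without loss of generality $\mu\ge0$. Then
\[
v^\top(A-B)v=\mu\, v^\top(A^{1/2}+B^{1/2})v\ge\mu^2,
\]
because $v^\top A^{1/2}v\ge\mu$ and $v^\top B^{1/2}v\ge0$. Since $X_t^*\succeq0$ by the recursion \eqref{eq:cov-recursion} and $X^e\succeq0$, this applies. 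Combining it with $\|\cdot\|_F\le\sqrt n\,\|\cdot\|$ gives
\[
W_2(\mathcal G_t,\mathcal G^e)\le\|m_t^*-m^e\|+\sqrt n\,\|X_t^*-X^e\|^{1/2}.
\]
I expect this square-root step to be the only genuinely non-routine point. It is also what forces the square root $\|X_0-X^e\|^{1/2}$ in \eqref{eq:gaussian-surrogate-W2-bound}, rather than a linear dependence.

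Finally, I insert \eqref{eq:two-sided-mp} and \eqref{eq:cov-turnpike-bound} and use $\sqrt{a+b}\le\sqrt a+\sqrt b$. This yields
\[
W_2(\mathcal G_t,\mathcal G^e)\le C_m\rho^t\|m_0-m^e\|+C_m\rho^{N-t}\|r^e\|
+\sqrt{nC_X}\,\rho_X^{t/2}\|X_0-X^e\|^{1/2}+\sqrt{nC_X}\,\rho_X^{(N-t)/2}.
\]
Set $\rho_G:=\max\{\rho,\rho_X^{1/2}\}\in(0,1)$ and $C_G:=\max\{C_m,\ \sqrt{nC_X}\}\,(1+\|r^e\|)$. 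With these choices \eqref{eq:gaussian-surrogate-W2-bound} follows for $t=0,\ldots,N$. The constants $C_m$, $C_X$, $\rho$, $\rho_X$ and $r^e=Q_fm^e-p^e$ come from the earlier theorems and the static reference. None of them depends on $N$, $t$ or $\mu_0$, so $C_G$ and $\rho_G$ are independent of the initial distribution as claimed, and no bounded-set restriction like \eqref{eq:initial-moment-R0-bound} is needed.
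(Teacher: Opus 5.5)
Your proposal is correct and follows essentially the same route as the paper: the common-Gaussian coupling giving $W_2(\mathcal G_t,\mathcal G^e)\le\|m_t^*-m^e\|+\|(X_t^*)^{1/2}-(X^e)^{1/2}\|_F$, the $1/2$-H\"older estimate for the matrix square root, insertion of \eqref{eq:two-sided-mp} and \eqref{eq:cov-turnpike-bound} with $\sqrt{a+b}\le\sqrt a+\sqrt b$, and the choice $\rho_G=\max\{\rho,\rho_X^{1/2}\}$ with $\|Q_fm^e-p^e\|$ absorbed into $C_G$. The only difference is that you prove the H\"older estimate yourself, obtaining the explicit constant $\sqrt n$ in the operator norm (up to a norm-equivalence factor if \eqref{eq:cov-turnpike-bound} uses another matrix norm). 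The paper only asserts this estimate, with an unspecified constant $C_{\rm sq}$.
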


\begin{proof}
	{The detailed proof is given in
	Appendix~\ref{app:cro1}.}
\end{proof}

\subsection{Interior approximation of the mean and covariance}
\label{subsec:interior-static-approximation}

The preceding turnpike estimates give a uniform approximation of the optimal
mean variables and covariance over the interior of the horizon. For an integer
\(L\ge0\) and a horizon \(N\ge2L+1\), define $\mathcal I_{N,L}
	:=
	\{L,\ldots,N-L-1\}$, which is obtained by removing \(L\) stages from each end
of the horizon.

Let \(C_m,C_u,\rho\) and \(C_X,\rho_X\) be the constants in
Theorems~\ref{thm:affine-mean-turnpike} and
\ref{thm:cov-turnpike}, respectively. Recall that
\(r^e=Q_fm^e-p^e\), and set
\begin{equation*}
	\begin{aligned}
		\rho_{\rm int}
		&:=
		\max\{\rho,\rho_X\},\\
		C_{\rm int}
		&:=\!
		(C_m\!+\!C_u)
		\bigl(
		\|m_0\!-\!m^e\|\!+\!\|r^e\|
		\bigr)\!+\!
		C\!_X
		\bigl(
		\|X_0\!-\!X^e\|\!+\!1
		\bigr).
	\end{aligned}
\end{equation*}

\begin{corollary}
	\label{cor:interior-static-approximation}
	Under Assumptions~\ref{ass:stationary-empirical}, \ref{ass:psd-W}, and \ref{ass:riccati-penalty},  for every integer \(L\ge0\), every
	\(N\ge2L+1\), and every \(t\in\mathcal I_{N,L}\),
	\begin{equation*}
			\|e_t\|
			\!+\!\|q_t\|\!+\!
			\|\bar{u}_t^*-u^e\|
			\!+\!\|\bar{w}_t^*-\bar w^e\|\!+\!
			\|X_t^*-X^e\|
			\le
			C_{\rm int}\rho_{\rm int}^{L}.
	\end{equation*}
\end{corollary}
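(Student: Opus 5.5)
This corollary is a direct consequence of Theorems~\ref{thm:affine-mean-turnpike} and \ref{thm:cov-turnpike}, so the plan is to evaluate their right-hand sides on the index set \(\mathcal I_{N,L}\) and collect constants. Fix \(L\ge0\), \(N\ge2L+1\) and \(t\in\mathcal I_{N,L}\). Then \(t\ge L\), and \(N-t-1\ge L\), hence also \(N-t\ge L+1\ge L\). Since \(\rho,\rho_X\le\rho_{\rm int}<1\), every exponential factor appearing below is bounded by \(\rho_{\rm int}^L\):
\[
\rho^t,\ \rho^{N-t},\ \rho^{N-t-1},\ \rho_X^t,\ \rho_X^{N-t}\ \le\ \rho_{\rm int}^{L}.
\]

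First, \eqref{eq:two-sided-mp} is valid for \(t\le N\) and \(t\le N-1\) holds here, so
\[
\|e_t\|+\|q_t\|\le C_m\big(\rho^t\|m_0-m^e\|+\rho^{N-t}\|r^e\|\big)\le C_m\rho_{\rm int}^L\big(\|m_0-m^e\|+\|r^e\|\big).
\]
Second, because \(t\le N-1\), \eqref{eq:two-sided-uw} applies and gives the same kind of bound with \(C_u\). This step uses \(N-t-1\ge L\), which is exactly why \(\mathcal I_{N,L}\) stops at \(N-L-1\).

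Third, \eqref{eq:cov-turnpike-bound} yields
\[
\|X_t^*-X^e\|\le C_X\rho_{\rm int}^L\big(\|X_0-X^e\|+1\big).
\]
Adding the three bounds reproduces exactly \(C_{\rm int}\rho_{\rm int}^L\) with \(C_{\rm int}\) as defined before the corollary.

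One detail needs care. The two theorems are each stated "with some \(\rho\)", and the notation reuses \(\rho\) for the mean estimates. I would read \(\rho\) as a single rate valid for both \eqref{eq:two-sided-mp} and \eqref{eq:two-sided-uw}, taking the larger of the two rates if they differ, since that is how Theorem~\ref{thm:affine-mean-turnpike} states it. Apart from confirming the index inequalities \(t\ge L\) and \(N-t-1\ge L\) on \(\mathcal I_{N,L}\), there is no real obstacle.
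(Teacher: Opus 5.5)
Your proposal is correct and follows the paper's proof: it applies Theorems~\ref{thm:affine-mean-turnpike} and \ref{thm:cov-turnpike}, uses $t\ge L$ and $N-t-1\ge L$ on $\mathcal I_{N,L}$ with $\rho,\rho_X\le\rho_{\rm int}<1$ to bound every exponential factor by $\rho_{\rm int}^L$, and sums the constants to get exactly $C_{\rm int}$. The only cosmetic difference is that the paper first groups everything into an initial term with $\rho_{\rm int}^{t}$ and a terminal term with $\rho_{\rm int}^{N-t-1}$ (bounding $\rho^{N-t}$ by $\rho_{\rm int}^{N-t-1}$), whereas you bound each factor by $\rho_{\rm int}^L$ directly.
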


\begin{proof}
	For \(0\le t\le N-1\), by Theorems
	\ref{thm:affine-mean-turnpike} and
	\ref{thm:cov-turnpike} and the fact that
	\(\rho\le\rho_{\rm int}\) and
	\(\rho_X\le\rho_{\rm int}\), we have
	\[
	\begin{aligned}
		&\|e_t\|
			+\|q_t\|+
			\|\bar{u}_t^*-u^e\|
			+\|\bar{w}_t^*-\bar w^e\|+
			\|X_t^*-X^e\|\\
		\le&
		\Big[
		(C_m+C_u)\|m_0-m^e\|
		+C_X\|X_0-X^e\|
		\Big]
		\rho_{\rm int}^{t}\\
		&\quad+
		\Big[
		(C_m+C_u)\|r^e\|
		+C_X
		\Big]
		\rho_{\rm int}^{N-t-1}.
	\end{aligned}
	\]
	Here the terms with exponent \(N-t\) have been bounded by terms with exponent
	\(N-t-1\). If \(t\in\mathcal I_{N,L}\), then
	\(t\ge L\) and \(N-t-1\ge L\). Hence both exponential factors are bounded by
	\(\rho_{\rm int}^{L}\), which proves
	the corollary.
\end{proof}

For a given tolerance \(\varepsilon>0\), let
\(L_{\varepsilon}^{\rm int}\) be the smallest nonnegative integer such that
$C_{\rm int}\rho_{\rm int}^{L_{\varepsilon}^{\rm int}}
\leq \varepsilon$.
Equivalently,
\begin{equation*}
	L_{\varepsilon}^{\rm int}
	=
	\left\lceil
	\frac{
		\log\!\left(
		\max\{C_{\rm int}/\varepsilon,1\}
		\right)
	}{
		-\log\rho_{\rm int}
	}
	\right\rceil .
\end{equation*}
If \(N\geq2L_{\varepsilon}^{\rm int}+1\), then Corollary \ref{cor:interior-static-approximation} gives
$\|e_t\|+\|q_t\|+\|\bar{u}_t^*-u^e\|+\|\bar{w}_t^*-\bar w^e\|+\|X_t^*-X^e\|\leq\varepsilon$
for every
\(t\in\mathcal I_{N,L_{\varepsilon}^{\rm int}}\).
Thus, the finite-horizon values may be retained on the initial and terminal
boundary layers, while the corresponding time-independent references are
used over the interior of the horizon. The resulting error, measured by the
sum above, is at most \(\varepsilon\).

Moreover,
$\operatorname{card}\bigl(\mathcal I_{N,L_{\varepsilon}^{\rm int}}\bigr)=N-2L_{\varepsilon}^{\rm int}$.
Since \(L_{\varepsilon}^{\rm int}\) is independent of \(N\),
$\lim\limits_{N\rightarrow\infty}\frac{\operatorname{card}\bigl(\mathcal I_{N,L_{\varepsilon}^{\rm int}}\bigr)}{N}
    =1$.

\endgroup

\subsection{Approximation using quasi-turnpike with error estimation}\label{subsec:quasi-turnpike}

Corollary~\ref{cor:interior-static-approximation} concerns only the optimal mean variables and covariance, and does not construct an
approximate control and disturbance distribution policy pair or establish a finite-horizon performance guarantee for such a pair. In this subsection we show that one can combine static feedback and finite horizon optimal steering to approximate the optimal control on the whole finite horizon.

As established in Section \ref{sec:bellman-riccati}, the finite-horizon optimal control policy is \(u_t^*=K_tx_t+k_t\). We first
define the time-independent coefficients
\begin{align*}
K_{\rm ss}&=-R^{-1}B^\top(I+P_{\rm ss}W)^{-1}P_{\rm ss}A,\\
k_{\rm ss}
&=-R^{-1}B^\top(I+P_{\rm ss}W)^{-1}
\bigl(P_{\rm ss}\Xi d+p^e-P_{\rm ss}m^e\bigr).
\end{align*}
They are obtained from \eqref{eq:K-affine}--\eqref{eq:k-affine} by replacing
\(P_{t+1}\) with \(P_{\rm ss}\) and \(s_{t+1}\) with
\(p^e-P_{\rm ss}m^e\). The associated time-independent affine control policy is
\(K_{\rm ss}x+k_{\rm ss}\).

For \(q\in\{0,\ldots,N\}\), define the hybrid control policy
\begin{equation}\label{eq:quasi-turnpike}
\pi_t^{(q)}(x)=
\begin{cases}
K_{\rm ss}x+k_{\rm ss},&0\le t<N-q,\\
K_tx+k_t,&N-q\le t\le N-1.
\end{cases}
\end{equation}
Here, \(q\) is the number of terminal stages at which the finite-horizon
coefficients are retained. Thus, \(q=0\) gives the time-independent affine control policy,
whereas \(q=N\) gives the finite-horizon optimal control policy.

For a fixed control policy \(\pi\), let \(J_N^{\rm wc}(\pi)\) denote the value
of \eqref{eq:truncated-cost}, averaged over the prescribed initial
distribution and maximized over \(\gamma\in\Gamma_{0:N-1}\). Let
\(J_N^*:=J_N^{\rm wc}(\pi^*)\), where \(\pi^*\) is the finite-horizon optimal control. The following proposition quantifies the approximation error of the costs of $\pi^{(q)}$ and $\pi^*$.

\begin{proposition}[Performance of the hybrid policy]\label{prp:quasi-turnpike-cost}
	Suppose that Assumptions~\ref{ass:stationary-empirical}, \ref{ass:psd-W} and \ref{ass:riccati-penalty} hold, and consider the policy \eqref{eq:quasi-turnpike}.
	Then there exist $q_0\in\mathbb{N}$, $C_{\rm p}>0$, and
	$\rho_{\rm p}\in(0,1)$, independent of $N$ and $q$, such that, for
	every $N\ge q\ge q_0$,
    \begin{align}\label{71}
     0\le \!J_N^{\rm wc}(\pi^{(q)})\!-\!J_N^\ast\le \!C_{\rm p}\!\left(\!1\!+\!\int_{\mathbb{R}^n}\!\!\!\|x\|^2\dd\mu_0(x)\!\right)\!\rho_{\rm p}^{\,2q}.   
    \end{align}
\end{proposition}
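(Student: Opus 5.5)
The lower bound in \eqref{71} is immediate: $\pi^*$ is optimal for the finite-horizon minimax problem, so $J_N^{\rm wc}(\pi^{(q)})\ge J_N^\ast$. For the upper bound I will use a performance-difference (telescoping) argument driven by the value functions. The idea is that the per-stage loss of the hybrid policy is a quadratic form in $u-u_t^*(x)$. On the first $N-q$ stages this difference is exponentially small in $N-t-1\ge q$, and it vanishes on the last $q$ stages.

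\textbf{Step 1 (policy value functions).} For a fixed affine policy $\pi$, define the worst-case cost-to-go $W_t^\pi$ backward by $W_N^\pi=V_N$ and
\[
W_t^\pi(x)=x^\top Qx+\pi_t(x)^\top R\pi_t(x)+\frac1M\sum_{i=1}^M\sup_{w}\bigl\{W_{t+1}^\pi(Ax+B\pi_t(x)+\Xi w)-\lambda\|w-\widehat w_t^{(i)}\|^2\bigr\}.
\]
This is the analogue of \eqref{eq:Bellman-finite-support} without the infimum. For $\pi=\pi^{(q)}$ we have $W_t^\pi=V_t$ for $t\ge N-q$. For $t<N-q$, $W_t^\pi$ is affine-quadratic with coefficients $(P_t^\pi,s_t^\pi,z_t^\pi)$ given by a policy-Riccati recursion. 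This recursion is well posed as long as $\lambda I-\Xi^\top P_{t+1}^\pi\Xi\succ0$.

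\textbf{Step 2 (one-step identity).} Let $\mathcal Q_t(x,u)$ be the bracket in \eqref{eq:Bellman-finite-support} evaluated with $V_{t+1}$. By Lemma~\ref{lem:one-step-affine-minimax}, $\mathcal Q_t(x,\cdot)$ is a strictly convex quadratic in $u$ minimized at $u_t^*(x)$, with Hessian $2M_t$. Here $M_t$ is an explicit matrix built from $R$, $B$, $P_{t+1}$, $\Xi$, and Lemma~\ref{lem:closed-tail-margin} gives $\|M_t\|\le \bar M$ uniformly in $t$ and $N$. Hence
\[
\mathcal Q_t(x,\pi_t(x))-V_t(x)=(\pi_t(x)-u_t^*(x))^\top M_t(\pi_t(x)-u_t^*(x)).
\]
Comparing the two suprema gives $W_t^\pi(x)-V_t(x)\le [\mathcal Q_t(x,\pi_t(x))-V_t(x)]+\frac1M\sum_i (W^\pi_{t+1}-V_{t+1})(x^{+,i})$. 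Here $x^{+,i}$ are the successor states under the maximizers for $W^\pi_{t+1}$. Iterating and integrating against $\mu_0$ yields
\[
J_N^{\rm wc}(\pi^{(q)})-J_N^*\le \bar M\sum_{t<N-q}\mathbb E^{\pi^{(q)},\gamma^\pi}\bigl\|(K_{\rm ss}-K_t)x_t+(k_{\rm ss}-k_t)\bigr\|^2.
\]
The expectation is along the trajectory generated by $\pi^{(q)}$ and its own worst-case adversary $\gamma^\pi$.

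\textbf{Step 3 (coefficient convergence).} Lemma~\ref{lem:riccati-tail-exp} together with Lipschitz bounds from Lemma~\ref{lem:closed-tail-margin} gives $\|K_t-K_{\rm ss}\|\le C\rho_P^{N-t-1}$. For $k_t$, I need $\|s_{t}-(p^e-P_{\rm ss}m^e)\|\le C\rho^{N-t}$. Using $(I+P W)^{-1}$ and the identity $A^\top(I+PW)^{-1}=A_c(P)^\top$, the recursion \eqref{eq:s-recursion-affine} becomes $s_t=A_c(P_{t+1})^\top(\cdots)$. The fixed point $s^e:=p^e-P_{\rm ss}m^e$ satisfies the same recursion with $P_{\rm ss}$; this follows from the factorization in the proof of Proposition~\ref{prop:static-equilibrium}. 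Subtracting the two recursions gives a linear recursion for the error, driven by $\|P_{t+1}-P_{\rm ss}\|$ and started from $s_N-s^e=-s^e$. Lemma~\ref{lem:uniform-product}, applied transposed, plus a discrete convolution of two geometric sequences then give the decay with some $\rho_{\rm p}<1$. Hence each summand is at most $C\rho_{\rm p}^{2(N-t-1)}(1+\mathbb E\|x_t\|^2)$.

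\textbf{Step 4 (uniform second moments; main obstacle).} I expect this to be the hardest step. It requires $\sup_{t,N}\mathbb E\|x_t\|^2\le C(1+\int\|x\|^2\dd\mu_0)$ along the \emph{policy's} worst-case trajectory, not the optimal one. I will first show that $\|P_t^\pi-P_t\|\le C\rho_{\rm p}^{2q}$ for $t<N-q$, by applying the Step 2 bound to the quadratic coefficients. I then choose $q_0$ large enough that $P^\pi_t$ lies in a neighborhood of $\mathscr S$ on which the margin $\lambda I-\Xi^\top P\Xi\succeq(\delta/2) I$ still holds; this is what justifies the well-posedness in Step 1. On the same neighborhood, the closed-loop matrix $A+BK_{\rm ss}+\Xi H_t^\pi$ stays within $\delta_L$ of $A_c$ in the common-Lyapunov window \eqref{eq:common-lyap-window}. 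On the terminal layer, the matrices coincide with the $A_c(P_{t+1})$ controlled by Lemma~\ref{lem:uniform-product}. This yields a uniform exponential product bound, and the mean and covariance recursions (as in \eqref{eq:cov-recursion}) have bounded forcing, which gives the moment bound. Summing $\sum_{t<N-q}\rho_{\rm p}^{2(N-t-1)}\le \rho_{\rm p}^{2q}/(1-\rho_{\rm p}^2)$ then yields \eqref{71}.
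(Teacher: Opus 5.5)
Your Steps 1--3 are sound and follow the paper's proof closely: the estimate for $K_t-K_{\rm ss}$ and $k_t-k_{\rm ss}$, the identity $\mathcal Q_t(x,u)-V_t(x)=(u-u_t^*(x))^\top M_t(u-u_t^*(x))$, and telescoping along the hybrid policy's own worst-case closed loop. For $s_t$ you use the fixed-point relation $s^e=A_c^\top(P_{\rm ss}\Xi d+s^e)$ and the transposed product bound. This is a valid and more direct alternative to the paper's device of applying Theorem~\ref{thm:affine-mean-turnpike} with $m_0=m^e$.

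The gap is in Step 4, which is circular in the order you give it. The Step~2 inequality bounds $W^\pi_t-V_t$ only by a sum of expected losses along the trajectory of $\pi^{(q)}$ and its worst-case adversary started at time $t$. Extracting $\|P^\pi_t-P_t\|\le C\rho_{\rm p}^{2q}$ from it therefore already requires a uniform second-moment bound for that trajectory. Equivalently, it requires a uniform exponential bound on products of $F^\pi_r=(I+\Xi(\lambda I-\Xi^\top P^\pi_{r+1}\Xi)^{-1}\Xi^\top P^\pi_{r+1})(A+BK_{\rm ss})$. You obtain that bound only after knowing that $P^\pi_{r+1}$ is close to $P_{\rm ss}$, and the same holds for the margin that makes $W^\pi_t$ well defined. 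Moreover, the mean forcing of this closed loop contains $\Xi(\lambda I-\Xi^\top P^\pi_{r+1}\Xi)^{-1}\Xi^\top s^\pi_{r+1}$. So ``bounded forcing'' also needs control of the affine coefficients $s^\pi_r$, which your plan never estimates.

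Both points can be repaired by backward induction on $t$, starting at $t=N-q$, where $P^\pi=P$ and $s^\pi=s$.
\begin{itemize}
\item Fix $\eta>0$ so small that $\|P-P_r\|\le\eta$ keeps the margin $\lambda I-\Xi^\top P\Xi\succeq(\delta/2)I$ of Lemma~\ref{lem:closed-tail-margin}, and, for large $q$, places $F^\pi_r$ in the window \eqref{eq:common-lyap-window}.
\item Suppose $\|P^\pi_r-P_r\|+\|s^\pi_r-s_r\|\le\eta$ for all $r>t$. Then the product and moment bounds from time $t$ hold with constants uniform in $t,N,q$, so $0\le W^\pi_t(x)-V_t(x)\le C\rho_{\rm p}^{2q}(1+\|x\|^2)$.
\item A quadratic $f$ with $0\le f(x)\le\varepsilon(1+\|x\|^2)$ has quadratic, linear and constant coefficients of norm at most $\varepsilon$. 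Hence the hypothesis propagates to $t$ once $C\rho_{\rm p}^{2q_0}\le\eta$.
\end{itemize}

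The paper avoids the bootstrap altogether. On $[0,N-q)$ the coefficients of the policy's worst-case value are iterates of a single time-invariant policy Bellman map. This map has fixed point $(P_{\rm ss},p^e-P_{\rm ss}m^e)$ and a block-triangular linearization with diagonal blocks $\Delta\mapsto A_c^\top\Delta A_c$ and $v\mapsto A_c^\top v$. The iteration starts from $(P_{N-q},s_{N-q})$, which is $O(\rho^q)$-close to that fixed point. The local-contraction argument of Lemma~\ref{lem:riccati-tail-exp} therefore keeps all iterates near the fixed point directly, and the moment bound is derived afterwards.
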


\begin{proof}
The detailed proof is given in
Appendix~\ref{app:prop4}.
\end{proof}

This proposition shows that the turnpike structure can be used to construct an approximate control policy with an explicit horizon-uniform bound on its worst-case finite-horizon performance loss, which in practice reduces computational complexity by replacing part of time-varying policy by static controls. In the literature such strategy is sometimes referred to as quasi-turnpike method \cite{TrelatZuazua2015, Esteve2022}.

\section{Numerical Experiments}
\label{sec:numerical-experiments}

\subsection{Two-dimensional example and empirical support}

We first consider the two-dimensional system
with $A=\left[\begin{matrix}
1.05 & 0.12\\
0    & 0.92
\end{matrix}\right]$,
$B=I_2$, $\Xi=0.35I_2$, $Q=\left[\begin{matrix}
1&0\\0&0.7
\end{matrix}\right]$, $Q_f=Q$, $R=0.8I_2$, $\lambda=4$.
The horizon is \(N=80\). The empirical and initial moments are
$d=\begin{bmatrix}0.20\\-0.12\end{bmatrix}$, $\Sigma=\left[\begin{matrix}
0.050&0.012\\0.012&0.035
\end{matrix}\right]$,
$m_0=\begin{bmatrix}2.0\\-1.5\end{bmatrix}$,
$X_0=\left[\begin{matrix}0.25&0\\0&0.18\end{matrix}\right]$.

The empirical distribution has \(M=12\) equally weighted support points. We
construct them as
\begin{equation*}
\widehat w^{(i)}
=d+\sqrt{2}\,\Sigma^{1/2}
\begin{bmatrix}
\cos\theta_i\\ \sin\theta_i
\end{bmatrix},
\qquad
\theta_i=\frac{2\pi(i-1)}{M},
\end{equation*}
where \(\theta_i\) gives \(M\) equally spaced directions and
\(\Sigma^{1/2}\) is the symmetric positive-semidefinite square root. Hence,
\[
\begin{aligned}
\frac1M\sum_{i=1}^M\widehat w^{(i)}=d,~
\frac1M\sum_{i=1}^M
(\widehat w^{(i)}-d)(\widehat w^{(i)}-d)^\top=\Sigma.
\end{aligned}
\]
The same support is used at every stage. Table~\ref{tab:numerical-support}
lists the support points.

\begin{table}[!t]
\centering
\caption{Equal-weight empirical support used in the
two-dimensional example}
\label{tab:numerical-support}
\resizebox{\columnwidth}{!}{%
\begin{tabular}{c rr@{\hspace{5mm}}c rr}
\toprule
\(i\)&\(\widehat w_1^{(i)}\)&\(\widehat w_2^{(i)}\)&
\(i\)&\(\widehat w_1^{(i)}\)&\(\widehat w_2^{(i)}\)\\
\midrule
1  &  0.513458 & -0.078240 & 7  & -0.113458 & -0.161760\\
2  &  0.492343 &  0.046794 & 8  & -0.092343 & -0.286794\\
3  &  0.392894 &  0.127137 & 9  &  0.007106 & -0.367137\\
4  &  0.241760 &  0.141259 & 10 &  0.158240 & -0.381259\\
5  &  0.079436 &  0.085377 & 11 &  0.320564 & -0.325377\\
6  & -0.050583 & -0.025536 & 12 &  0.450583 & -0.214464\\
\bottomrule
\end{tabular}}
\end{table}

For these data, $W=1.219375I_2\succ0$, $\lambda_{\min}
\bigl(\lambda I-\Xi^\top P_{\rm ss}\Xi\bigr)
\approx3.8028$.
The spectral radius of \(A_c\) is approximately \(0.3735\), and
\(\sigma_{\min}(\mathcal K_e)\approx0.7107\). The reduced static Hamiltonian saddle problem gives
\begin{small}
    \[
m^e\!\!=\!\!\begin{bmatrix}-0.0028\\-0.0134\end{bmatrix}\!,
p^e\!\!=\!\!\begin{bmatrix}0.0560\\-0.0336\end{bmatrix}\!,
u^e\!\!=\!\!\begin{bmatrix}-0.0700\\0.0420\end{bmatrix}\!,
\bar w^e\!\!=\!\!\begin{bmatrix}0.2049\\-0.1229\end{bmatrix}\!.
\]
\end{small}
We repeated the computation with other equally weighted supports having the
same \(d\) and \(\Sigma\). The resulting mean and covariance sequences agreed
to numerical precision. This is consistent with the recursions, which depend
on the empirical support through its first two moments.

\subsection{Turnpike estimates and interior approximation}

All vector errors use the Euclidean norm, and all matrix errors use the
Frobenius norm. The dashed curves in
Figures~\ref{fig:numerical-mean-adjoint}--\ref{fig:numerical-gaussian} are
a posteriori numerical envelopes. They are constructed after the
finite-horizon errors have been computed. The decay rates are chosen from the
computed Riccati convergence and the contraction of the limiting closed-loop
matrix. The multiplicative coefficients are then selected to cover the errors
above the numerical display threshold. These envelopes illustrate the
corresponding two-sided exponential form. They are not evaluations of the
abstract constants in the analytical estimates.

\begin{figure}[!t]
\centering
\subfloat[]{%
\includegraphics[width=0.485\linewidth]{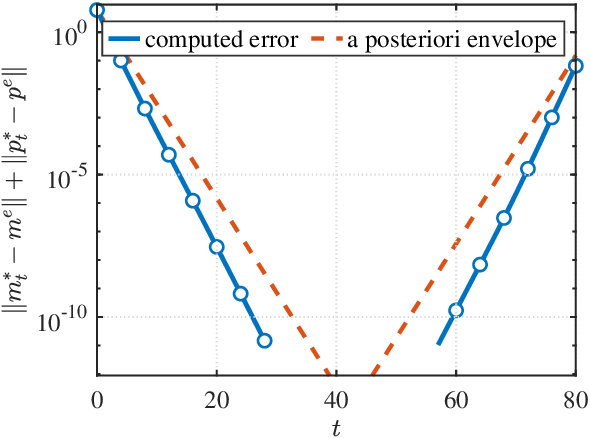}%
\label{fig:numerical-mean-adjoint}}
\hfill
\subfloat[]{%
\includegraphics[width=0.485\linewidth]{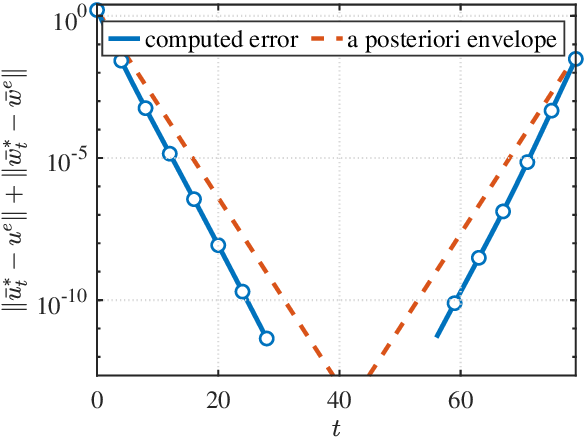}%
\label{fig:numerical-input-disturbance}}
\caption{Mean turnpike errors. (a)
\(\|m_t^*-m^e\|+\|p_t^*-p^e\|\). (b)
\(\|\bar u_t^*-u^e\|+\|\bar w_t^*-\bar w^e\|\). The dashed curves are
the corresponding a posteriori numerical envelopes.}
\label{fig:numerical-mean-turnpike}
\end{figure}

\begin{figure}[!t]
\centering
\subfloat[]{%
\includegraphics[width=0.485\linewidth]{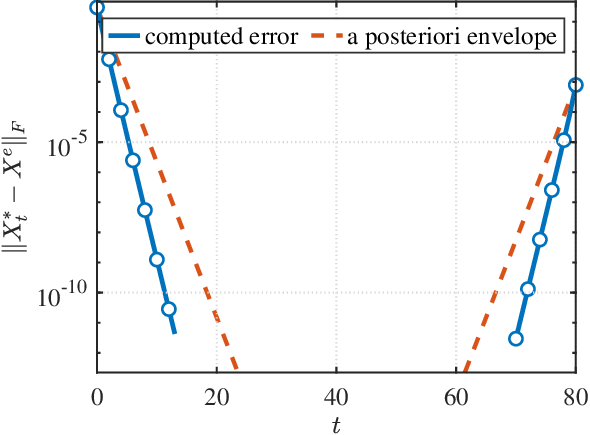}%
\label{fig:numerical-covariance}}
\hfill
\subfloat[]{%
\includegraphics[width=0.485\linewidth]{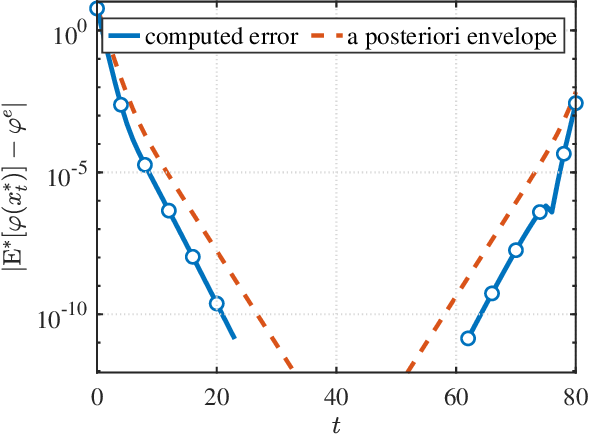}%
\label{fig:numerical-quadratic-observable}}
\caption{Covariance and quadratic-function errors. (a)
\(\|X_t^*-X^e\|_F\). (b)
\(\left|\mathbb E^*[\varphi(x_t^*)]-\varphi^e\right|\) for
\(\varphi(x)=x^\top Qx\). The dashed curves are the corresponding a
posteriori numerical envelopes.}
\label{fig:numerical-covariance-quadratic}
\end{figure}

Figures~\ref{fig:numerical-mean-adjoint} and
\ref{fig:numerical-input-disturbance} plot the left-hand sides of
\eqref{eq:two-sided-mp} and \eqref{eq:two-sided-uw}, respectively.
Figure~\ref{fig:numerical-covariance} plots the left-hand side of
\eqref{eq:cov-turnpike-bound}. Figure~\ref{fig:numerical-quadratic-observable}
corresponds to \eqref{eq:quadratic-observable-turnpike-bound}.

For the measure-turnpike plot, a stage \(t\) is called off-turnpike when $\|m_t^*-m^e\|+\|\bar u_t^*-u^e\|
+\|\bar w_t^*-\bar w^e\|>\varepsilon$.
The plotted fraction is the number of such stages divided by \(N\).

\begin{figure}[!t]
\centering
\subfloat[]{%
\includegraphics[width=0.485\linewidth]{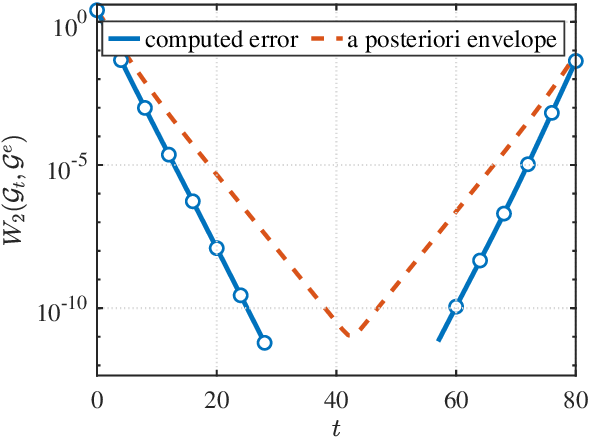}%
\label{fig:numerical-gaussian}}
\hfill
\subfloat[]{%
\includegraphics[width=0.485\linewidth]{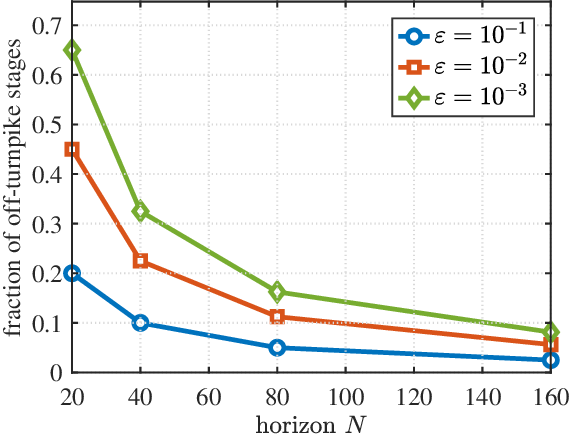}%
\label{fig:numerical-measure-turnpike}}
\caption{Gaussian moment-surrogate and measure-turnpike. (a) \(W_2(\mathcal G_t,\mathcal G^e)\); the dashed curve is the
corresponding a posteriori numerical envelope. (b) Fraction of off-turnpike stages for three tolerances and four
horizons.}
\label{fig:numerical-gaussian-measure}
\end{figure}

Figure~\ref{fig:numerical-gaussian} corresponds to
\eqref{eq:gaussian-surrogate-W2-bound}. The probability measures
\(\mathcal G_t\) and \(\mathcal G^e\) are the Gaussian surrogates
defined before Corollary~\ref{cor:gaussian-surrogate-turnpike} (which are not
the actual closed-loop state distributions). The decrease in
Figure~\ref{fig:numerical-measure-turnpike} agrees with
Corollary~\ref{cor:measure-turnpike-affine}, which bounds the number of
off-turnpike stages independently of \(N\).

For the interior approximation, define
$\eta_t:=
\|m_t^*-m^e\|+\|p_t^*-p^e\|
+\|\bar u_t^*-u^e\|+\|\bar w_t^*-\bar w^e\|+\|X_t^*-X^e\|_F$
and $E_{N,L}:=\max_{t\in\mathcal I_{N,L}}\eta_t$. Then \(\eta_t\) is the sum of the errors in
Corollary~\ref{cor:interior-static-approximation}, and \(E_{N,L}\) is their
largest value on \(\mathcal I_{N,L}\).

\begin{figure}[!t]
\centering
\subfloat[]{%
\includegraphics[width=0.485\linewidth]{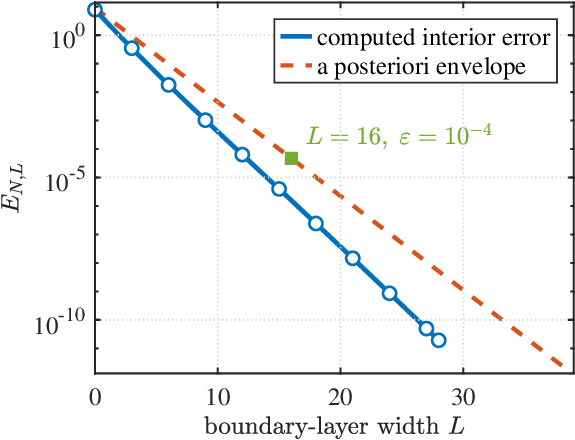}%
\label{fig:numerical-interior-approximation}}
\hfill
\subfloat[]{%
\includegraphics[width=0.485\linewidth]{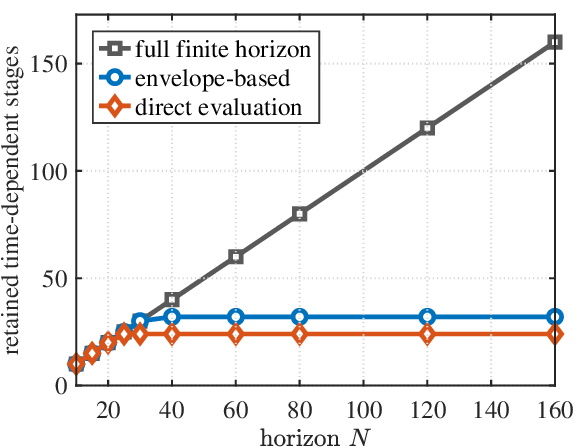}%
\label{fig:numerical-interior-stage-advantage}}
\caption{Interior approximation for
\(\varepsilon=10^{-4}\). (a) Interior error \(E_{N,L}\) and its (a posteriori)
numerical envelope. The envelope gives \(L=16\), while direct evaluation gives
\(E_{80,12}\le\varepsilon\). (b) Numbers of retained time-dependent stages
for the full finite-horizon sequences, the envelope-based approximation, and
direct evaluation.}
\label{fig:numerical-interior-results}
\end{figure}

The marked value \(L=16\) in
Figure~\ref{fig:numerical-interior-approximation} is obtained from the a
posteriori envelope. It is not the theoretically defined boundary-layer width
\(L_\varepsilon^{\rm int}\) unless explicit values of \(C_{\rm int}\) and
\(\rho_{\rm int}\) are used. Figure~\ref{fig:numerical-interior-stage-advantage}
reports how many stages retain the finite-horizon mean variables and
covariance. It does not measure the total cost of solving the finite-horizon
minimax problem.

The finite-horizon recursions are evaluated at every stage in
Figures~\ref{fig:numerical-mean-turnpike}--\ref{fig:numerical-interior-results}.
Errors below the numerical display threshold are omitted from the
semilogarithmic curves rather than replaced by a positive lower bound.

\subsection{Policy comparison and coefficient computation time}

Finally we verify the approximation error of the value function when using the hybrid quasi-turnpike policy in Section \ref{subsec:quasi-turnpike}. We measure the relative value gap by
$g_q:=\frac{J_N^{\rm wc}(\pi^{(q)})-J_N^*}{|J_N^*|}$.

\begin{figure}[!t]
\centering
\subfloat[]{%
\includegraphics[width=0.485\linewidth]{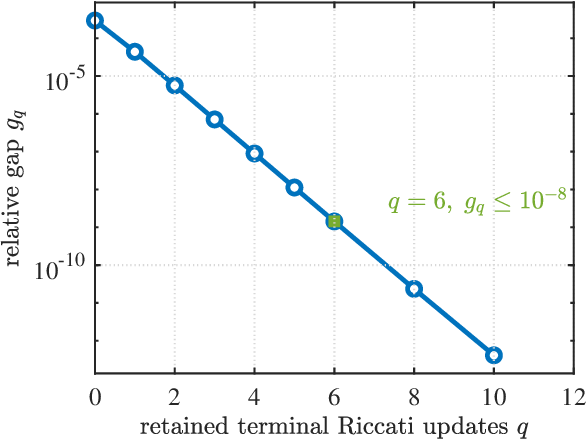}%
\label{fig:numerical-policy-value-gap}}
\hfill
\subfloat[]{%
\includegraphics[width=0.485\linewidth]{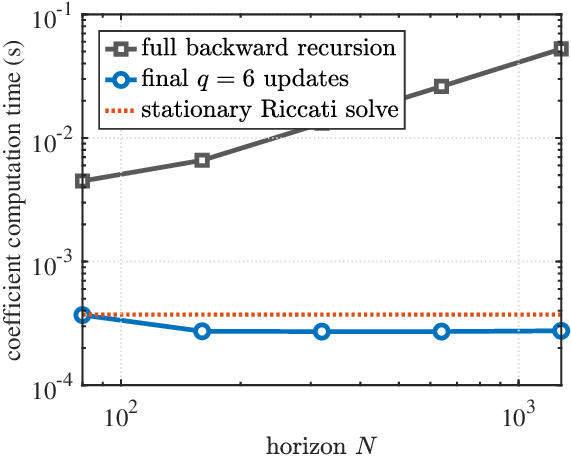}%
\label{fig:numerical-policy-runtime}}
\caption{ Policy comparison and
coefficient computation time. (a) Relative value gap versus the
number \(q\) of retained terminal stages. (b) Computation time for the full
recursion, the final six updates, and the one-time computation of
\(P_{\rm ss}\).}
\label{fig:numerical-policy-results}
\end{figure}


All reported relative value gaps are evaluated under the original uncentered
empirical distribution. The worst-case disturbance distribution is recomputed
for each policy. In this example, retaining six terminal stages gives a
relative value gap below \(10^{-8}\). The policy designed with \(d=0\) has a
larger relative value gap. The relative value gaps here are numerical quantities and are not themselves bounded by Proposition~\ref{prp:quasi-turnpike-cost}, which instead provides a horizon-uniform exponential bound on the absolute worst-case cost gap.

Figure~\ref{fig:numerical-policy-runtime} reports the median of 15
measurements after two warm-up runs. Each measurement is repeated until the
accumulated time exceeds \(0.15\) seconds. These measurements include only the
computation of the Riccati matrices and the policy coefficients \(K_t\) and
\(k_t\). They exclude closed-loop simulation, worst-case value evaluation,
and the total runtime of the complete dynamic program.





\subsection{Sensitivity to the penalty parameter and a 6-D example}

We test $\lambda\in\{0.10,0.12,0.16,0.20,0.22,0.23,0.24,0.25$, $0.27,0.30,0.40,0.50,1,2,4,8\}$.
For each value satisfying the limiting penalty condition, we compute
\(P_{\rm ss}\), the smallest eigenvalues of \(W\) and
\(\lambda I-\Xi^\top P_{\rm ss}\Xi\), and the reference covariance. Numerical
bisection gives the value at which
\(\lambda_{\min}(\lambda I-\Xi^\top P_{\rm ss}\Xi)\) crosses zero. We denote
this estimate by \(\widehat\lambda_{\rm th}\).

\begin{figure}[!t]
\centering
\subfloat[]{%
\includegraphics[width=0.485\linewidth]{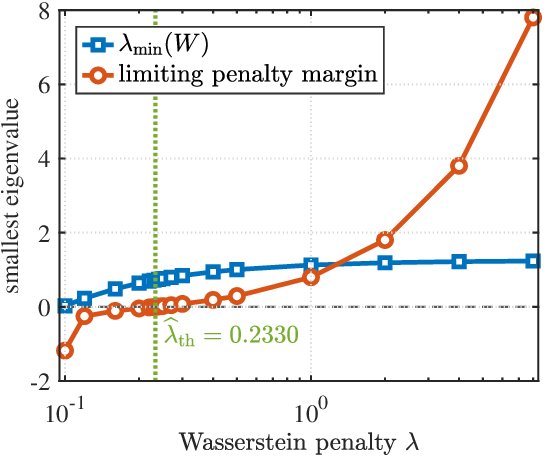}%
\label{fig:numerical-lambda-margins}}
\hfill
\subfloat[]{%
\includegraphics[width=0.485\linewidth]{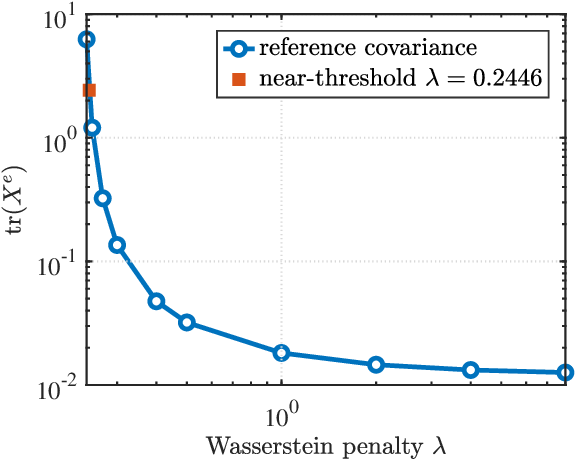}%
\label{fig:numerical-lambda-covariance}}
\caption{Sensitivity to the Wasserstein penalty. (a) Smallest
eigenvalues of \(W\) and \(\lambda I-\Xi^\top P_{\rm ss}\Xi\). The vertical
line marks \(\widehat\lambda_{\rm th}\approx0.2330\). (b)
\(\operatorname{tr}(X^e)\). The marked point corresponds to
\(\lambda=0.2446\).}
\label{fig:numerical-lambda-sensitivity}
\end{figure}

Figure~\ref{fig:numerical-lambda-margins} shows that \(W\succeq0\) does not
imply the limiting penalty condition. Figure~\ref{fig:numerical-lambda-covariance}
shows a sharp increase in \(\operatorname{tr}(X^e)\) near the estimated
threshold. This is a numerical sensitivity result; no monotonicity in \(\lambda\) is asserted.

For a second example, we use a six-dimensional state, three control inputs, three disturbance components, and \(N=240\). The matrix \(B\in\mathbb R^{6\times3}\) has rank \(3\), and \(W\succeq0\) is singular
with rank \(3\). The spectral radius of \(A_c\) is approximately \(0.92\).
The ratio of the largest to the smallest eigenvalue of the empirical
covariance is \(80\). The stabilizability, observability, and limiting penalty
conditions are satisfied. The system matrices are
\begin{equation*}
\setlength{\arraycolsep}{1.8pt}
\begin{gathered}
A=
\!\left[\begin{smallmatrix}
1.03 & 0.08 & 0    & 0.02 & 0    & 0\\
0    & 1.01 & 0.06 & 0    & 0.02 & 0\\
0    & 0    & 0.99 & 0    & 0    & 0.02\\
0    & 0    & 0    & 0.92 & 0.05 & 0\\
0    & 0    & 0    & 0    & 0.88 & 0.04\\
0    & 0    & 0    & 0    & 0    & 0.84
\end{smallmatrix}\right],
~B=\!
\left[\begin{matrix}
\diag(1,0.8,0.65)\\[1pt]
0_{3\times3}
\end{matrix}\right],\\
\Xi=B\diag(0.36,0.30,0.25),\\
Q=\diag(1,0.9,0.8,0.4,0.3,0.2),\\
R=\diag(1,1.2,0.9),\qquad
Q_f=Q,\qquad \lambda=0.30.
\end{gathered}
\end{equation*}
The initial and empirical moments are
\begin{equation*}
\setlength{\arraycolsep}{2pt}
\begin{gathered}
m_0=(1.5,-1.2,0.8,0.5,-0.4,0.3)^\top,\\
X_0=\diag(0.30,0.22,0.18,0.10,0.08,0.06),\\
d=(0.18,-0.08,0.05)^\top,
~\Sigma=
\left[\begin{smallmatrix}
 0.06525 &  0.05475 &  0.0045\\
 0.05475 &  0.06525 & -0.0045\\
 0.0045  & -0.0045  &  0.006
\end{smallmatrix}\right].
\end{gathered}
\end{equation*}
The empirical distribution has \(M=18\) equally weighted support points with
these moments. For this linear-quadratic model, the reported mean and
covariance recursions depend on the empirical support only through \(d\) and
\(\Sigma\).

\begin{figure}[!t]
\centering
\subfloat[]{%
\includegraphics[width=0.485\linewidth]{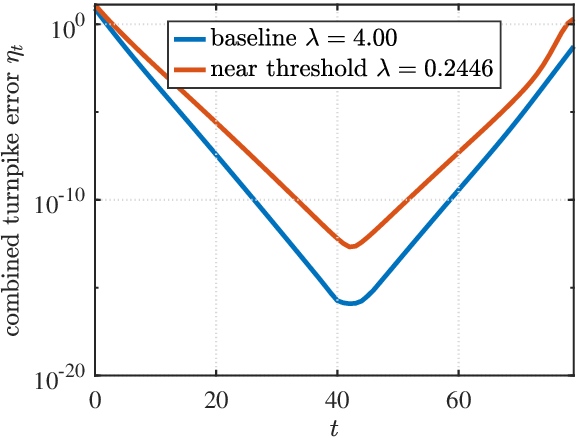}%
\label{fig:numerical-near-threshold}}
\hfill
\subfloat[]{%
\includegraphics[width=0.485\linewidth]{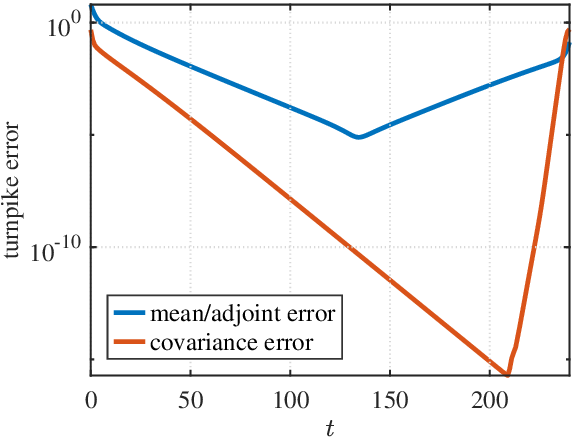}%
\label{fig:numerical-high-dimensional}}
\caption{Near-threshold and six-state examples. (a) Combined
error \(\eta_t\) for \(\lambda=4\) and \(\lambda=0.2446\). (b)
Mean-state/adjoint and covariance errors in the six-state example.}
\label{fig:numerical-challenging-cases}
\end{figure}

Figure~\ref{fig:numerical-near-threshold} retains the two-sided turnpike form
for both penalty values, while the near-threshold case has larger errors.
Values near the center can reach numerical precision and are not used for
quantitative comparison. In Figure~\ref{fig:numerical-high-dimensional}, the
blue curve is \(\|m_t^*-m^e\|+\|p_t^*-p^e\|\), and the orange curve is
\(\|X_t^*-X^e\|_F\). Both errors retain the two-sided turnpike form. The
mean-state and mean-adjoint error decays more slowly than in the
two-dimensional example.



\section{Conclusion}\label{sec:conclusion}


In this paper, we study the Wasserstein-penalized linear-quadratic control problem over a long horizon and establish the turnpike estimates, showing that for a prescribed tolerance, the finite-horizon mean variables and covariance can be approximated by their time-independent references outside the initial and terminal boundary layers. 

As a control consequence, we constructed a hybrid quasi-turnpike policy that uses time-independent affine feedback away from the terminal boundary and finite-horizon optimal coefficients over the final $q$ stages whose worst-case finite-horizon cost gap decays exponentially in $q$, with constants independent of the horizon. Thus, for a prescribed performance accuracy, the number of time-dependent terminal coefficients that must be retained need not grow with the horizon, and the fraction of the horizon over which time-independent approximations can be used approaches one as the horizon increases. 

Several questions remain open. First, under what additional assumptions can the mean and covariance estimates be strengthened to a 2-Wasserstein turnpike estimate for the actual closed-loop state distributions, rather than only for their Gaussian moment surrogates? Second, when the empirical mean and covariance vary with time, can suitable time-dependent references be characterized through nonautonomous Hamiltonian and Lyapunov equations while retaining estimates uniform in the horizon? Extensions to infinite-dimensional and nonlinear systems, as well as applications of the turnpike structure to model predictive control and reinforcement-learning-based control schemes \cite{Breiten2020,Reiter2025}, also remain of interest.






\appendices

\section{Proofs in Section \ref{sec:bellman-riccati}}

\subsection{Proof of Lemma~\ref{lem:one-step-affine-minimax}}
\label{app:one-step-affine-minimax}

\begin{proof}
	Substituting \eqref{eq:one-step-continuation} into the maximization over \(w_t\) in \eqref{eq:Bellman-finite-support}, the objective function associated with the empirical sample \(\widehat w_t^{(i)}\) has Hessian
	\(
	2(\Xi^\top P_{t+1}\Xi-\lambda I)
	\)
	with respect to \(w_t\). By \eqref{eq:one-step-admissibility}, this Hessian is negative definite, hence the maximizer is unique and
    satisfies
	\begin{equation}
		\begin{aligned}
			0\!=\!
			\Xi^\top\! P_{t+1}(Ax_t\!+\!\!Bu_t\!+\!\Xi w_t)
			+\Xi^\top \!s_{t+1} \!-\!\lambda(w_t\!-\!\widehat w_t^{(i)}),
		\end{aligned}
		\label{eq:w-foc-revised}
	\end{equation}
	which gives
	\begin{equation}
		\begin{aligned}
			w_t^{*,i}(x_t,u_t)
			={}&
			(\lambda I-\Xi^\top P_{t+1}\Xi)^{-1}        \\
			&\!\!\!\!\!\!\!\!\!\!\!\!\!\!\!\!\!\!\!\times
			\Big[
			\Xi^\top P_{t+1}(Ax_t+Bu_t)
			+\Xi^\top s_{t+1}
			+\lambda\widehat w_t^{(i)}
			\Big].
		\end{aligned}
		\label{eq:wstar-affine-xu}
	\end{equation}
	
	After substituting \eqref{eq:wstar-affine-xu} into the right-hand side of \eqref{eq:Bellman-finite-support}, the objective function in \(u_t\) is strictly convex. Its Hessian is
	\(
	2\Big(
	R+B^\top P_{t+1}B
	+B^\top P_{t+1}\Xi
	(\lambda I-\Xi^\top P_{t+1}\Xi)^{-1}
	\Xi^\top P_{t+1}B
	\Big)\succ0,
	\)
	because \(R\succ0\) and \(\lambda I-\Xi^\top P_{t+1}\Xi\succ0\). Therefore, the minimizing input is unique. Differentiating the objective function with respect to \(u_t\), and using \eqref{eq:w-foc-revised} yields 
	\begin{equation}
		\begin{aligned}
			0={}&
			Ru_t^*
			+B^\top
			\Bigg[
			P_{t+1}(Ax_t+Bu_t^*)
			+s_{t+1}  \\
			&\qquad\qquad
			+P_{t+1}\Xi
			\frac1M\sum_{i=1}^M
			w_t^{*,i}(x_t,u_t^*)
			\Bigg].
		\end{aligned}
		\label{eq:u-foc-revised}
	\end{equation}
	
	Averaging \eqref{eq:wstar-affine-xu} over \(i=1,\ldots,M\) and using the definition of \(d_t\) gives
	\begin{equation}
		\begin{aligned}
			\frac1M\sum_{i=1}^M
			w_t^{*,i}(x_t,u_t^*)  
			& =
			d_t
			+\lambda^{-1}\Xi^\top
			\Bigg[
			P_{t+1}(Ax_t+Bu_t^*)  \\
			&\!\!\!\!\!\!\!\!\!+s_{t+1}
			+P_{t+1}\Xi
			\frac1M\sum_{i=1}^M
			w_t^{*,i}(x_t,u_t^*)
			\Bigg].
		\end{aligned}
		\label{eq:wavg-revised}
	\end{equation}
	
	Combining \eqref{eq:u-foc-revised} and \eqref{eq:wavg-revised} yields
	\begin{equation}
		\begin{aligned}
			&(I+P_{t+1}W)
			\Bigg[
			P_{t+1}(Ax_t+Bu_t^*)
			+s_{t+1}  \\
			&\qquad\qquad
			+P_{t+1}\Xi
			\frac1M\sum_{i=1}^M
			w_t^{*,i}(x_t,u_t^*)
			\Bigg]  \\
			&\quad =
			P_{t+1}Ax_t+P_{t+1}\Xi d_t+s_{t+1}.
		\end{aligned}
		\label{eq:combined-gradient-equation}
	\end{equation}
	The matrix \(I+P_{t+1}W\) is nonsingular under
	\eqref{eq:one-step-admissibility}. To see this, note that
	\(
	I+P_{t+1}^{1/2}WP_{t+1}^{1/2}
	=
	I-\lambda^{-1}P_{t+1}^{1/2}\Xi\Xi^\top P_{t+1}^{1/2}
	+P_{t+1}^{1/2}BR^{-1}B^\top P_{t+1}^{1/2}
	\succ0,
	\)
	and
	\(
	\det(I+P_{t+1}W)
	=
	\det(I+P_{t+1}^{1/2}WP_{t+1}^{1/2}).
	\)
	Hence \eqref{eq:combined-gradient-equation} gives
	\begin{equation*}
		\begin{aligned}
			& P_{t+1}(Ax_t+Bu_t^*)
			+s_{t+1}  
			+P_{t+1}\Xi
			\frac1M\sum_{i=1}^M
			w_t^{*,i}(x_t,u_t^*)  \\
			&=
			(I+P_{t+1}W)^{-1}
			\big(P_{t+1}Ax_t+P_{t+1}\Xi d_t+s_{t+1}\big).
		\end{aligned}
	\end{equation*}
	Substituting it into
	\eqref{eq:u-foc-revised} yields
	\begin{equation*}
			u_t^*(x)\!
	=\!
	-R^{-1}\!B^\top\!
	(I+P_{t+1}W)^{-1} \!
	\big(
	P_{t+1}Ax
	+\!P_{t+1}\Xi d_t
	+s_{t+1}
	\big)
	\end{equation*}
	which proves \eqref{eq:affine-control}--\eqref{eq:k-affine}.
	
	Substituting \(u_t^*=K_tx_t+k_t\) into
	\eqref{eq:wstar-affine-xu} gives
	\eqref{eq:wstar-affine-state}--\eqref{eq:G-affine}.
\end{proof}

\subsection{Proof of Theorem~\ref{thm:affine-bellman-reduction}}
\label{app:thm1}

\begin{proof} Assumption~\ref{ass:riccati-penalty} gives the one-step condition \eqref{eq:one-step-admissibility} for each backward step, so Lemma~\ref{lem:one-step-affine-minimax} can be applied recursively.
	The proof is by backward induction. At \(t=N\), the claim follows from
	\(V_N(x_N)=x_N^\top Q_fx_N\). Suppose that
	\eqref{eq:affine-value} holds at stage \(t+1\).
	Lemma~\ref{lem:one-step-affine-minimax} gives the unique solution of the minimization problem in \(u_t\) and the corresponding support points of a worst-case disturbance distribution.
	
	Substituting the minimizer \(u_t^{*}\) and the support points
\(w_t^{*,i}\), \(i=1,\ldots,M\), into \eqref{eq:Bellman-finite-support}
	shows that \(V_t\) has the form \eqref{eq:affine-value}. Differentiating
	the right-hand side of \eqref{eq:Bellman-finite-support} with respect to
	\(x_t\), and using the first-order optimality conditions in the
	one-step problem, gives
	\begin{align*}
			P_tx_t+s_t
			={}&
			Qx_t
			+A^\top
			(I+P_{t+1}W)^{-1} \\
			&\quad\times
			\big(
			P_{t+1}A x_t
			+P_{t+1}\Xi d_t
			+s_{t+1}
			\big).
	\end{align*}
	Matching the coefficient of \(x_t\) and the constant term in the above equation gives
	\eqref{eq:P-recursion-affine} and \eqref{eq:s-recursion-affine}.
	
	It remains to identify the scalar term. Evaluating the right-hand side
	of \eqref{eq:Bellman-finite-support} at \(x_t=0\), and using
	\(u_t^*=k_t\) and \(w_t^{*,i}=G_t^i\), gives
	\eqref{eq:z-recursion-affine}--\eqref{eq:Z-increment}. This completes
	the induction step. 
\end{proof}

\subsection{Proof of Proposition~\ref{prop:riccati-convergence}}
\label{app:pro1}

\begin{proof}
	By Assumption~\ref{ass:psd-W}, for every \(P\succeq0\),
	\(
		I+W^{1/2}PW^{1/2}\succ0 .
	\)
	By Sylvester's determinant theorem,
	\(
		\det(I+PW)
		=
		\det(I+W^{1/2}PW^{1/2})>0 .
	\)
	Thus \(I+PW\) is nonsingular for every \(P\succeq0\), and
	\(\mathcal R\) is well defined on the positive-semidefinite cone.
	
	The matrix inversion lemma gives
	\begin{equation*}
			(I+PW)^{-1}P
			=
			P
			-
			PW^{1/2}
			(I+W^{1/2}PW^{1/2})^{-1}
			W^{1/2}P .
	\end{equation*}
	Using it in the definition of
	\(\mathcal R\), we obtain
	\begin{equation}
		\begin{aligned}
			\mathcal R(P)
			=&
			~Q+A^\top PA\\
			&-\!
			A^\top\! PW^{1/2}
			(I+W^{1/2}PW^{1/2})^{-1}
			W^{1/2}PA .
		\end{aligned}
		\label{eq:standard-riccati-form}
	\end{equation}
	Hence \(\mathcal R\) coincides with the standard discrete-time Riccati
	operator associated with the auxiliary input matrix \(W^{1/2}\), state
	weight \(Q\), and input weight \(I\).
	
    By Assumption~\ref{ass:psd-W}, the standard discrete-time Riccati convergence theorem applies to \eqref{eq:standard-riccati-form}. Therefore, starting from \(S_0=Q_f\succeq0\), the sequence \eqref{eq:S-orbit} remains positive semidefinite and bounded, and converges to the stabilizing positive-semidefinite solution \(P_{\rm ss}\) of \(P=\mathcal R(P)\).
	
	The stabilizing closed-loop matrix of the auxiliary Riccati equation is $A-W^{1/2}(I+W^{1/2}P_{\rm ss}W^{1/2})^{-1}W^{1/2}P_{\rm ss}A$.
	Applying the matrix inversion again gives
	$$
	(I+WP_{\rm ss})^{-1}=I-W^{1/2}(I+W^{1/2}P_{\rm ss}W^{1/2})^{-1}W^{1/2}P_{\rm ss}.
        $$
	Thus the stabilizing closed-loop matrix is equal to
	\((I+WP_{\rm ss})^{-1}A=A_c\). Since \(P_{\rm ss}\) is  stabilizing, \(A_c\) is Schur.
\end{proof}

\subsection{Proof of Lemma~\ref{lem:closed-tail-margin}}
\label{app:lem2}

\begin{proof}
	By Proposition~\ref{prop:riccati-convergence}, \(S_j\to P_{\rm ss}\).
	Thus \(\mathscr S\) is compact because it consists of a convergent
	sequence and its limit.
	
	For a symmetric matrix \(Y\), let \(\lambda_{\min}(Y)\) denote its smallest eigenvalue. For each \(S_j\),  Assumption~\ref{ass:riccati-penalty} gives \(\lambda I-\Xi^\top S_j\Xi\succ0\), and the same inequality holds at \(P_{\rm ss}\). Hence
	\(
	S\mapsto
	\lambda_{\min}(\lambda I-\Xi^\top S\Xi)
	\)
	is positive on \(\mathscr S\). Since this map is continuous,  it has a positive minimum on the compact set \(\mathscr S\).
	
	It remains to bound the matrices \(I+SW\) and \(I+WS\). For every
	\(S\in\mathscr S\), we have \(S\succeq0\) and \(W\succeq0\). The
	nonzero eigenvalues of \(SW\) and \(WS\) coincide with the nonzero
	eigenvalues of \(W^{1/2}SW^{1/2}\), which is positive semidefinite.
	Hence \(SW\) and \(WS\) have no eigenvalue equal to \(-1\), and
	therefore \(I+SW\) and \(I+WS\) are nonsingular for every
	\(S\in\mathscr S\).
	
	The maps $S\mapsto \sigma_{\min}(I+SW)$,
	$S\mapsto \sigma_{\min}(I+WS)$
	are continuous and positive on the compact set \(\mathscr S\). Their
	minima are positive. Taking \(\delta\) as the minimum of the three
	positive lower bounds proves \eqref{eq:uniform-admissibility-orbit}.
\end{proof}

\section{Proofs in Section \ref{sec:turnpike}}

\subsection{Proof of Lemma~\ref{lem:uniform-product}}
\label{app:lem4}

\begin{proof}
	The stated assumptions allow us to use Lemmas~\ref{lem:closed-tail-margin} and \ref{lem:riccati-tail-exp}.
	Set
	\(
	F_\ell:=A_c(P_\ell)=A_c(S_{N-\ell}).
	\)
	If \(N-\ell\ge J_L\), then \(F_\ell\) satisfies
	\(
	(F_\ell)^\top H_LF_\ell-H_L
	\preceq
	-\frac12 I .
	\)
	Therefore, with \(\|x\|_{H_L}:=(x^\top H_Lx)^{1/2}\),
	\[
	\begin{aligned}
		\|F_\ell x\|_{H_L}^2
		\le
		\|x\|_{H_L}^2-\frac12\|x\|^2 \le
		\left(1-\frac{1}{2\lambda_{\max}(H_L)}\right)
		\|x\|_{H_L}^2 .
	\end{aligned}
	\]
	Thus each factor with \(N-\ell\ge J_L\) has induced \(H_L\)-norm at most
	some \(\alpha\in(0,1)\).
	
	The remaining factors satisfy \(N-\ell<J_L\). There are at most \(J_L\)
	such factors in any product, and their \(H_L\)-induced norms are uniformly
	bounded because \(\mathscr S\) is compact. Let
	\(
	M_L:=\sup_{S\in\mathscr S}\|A_c(S)\|_{H_L\to H_L}<\infty .
	\)
	For a product of length \(k=t-s\), $\|\Phi_N(t,s)\|_{H_L\to H_L}
		\le
		M_L^{\min\{k,J_L\}}\alpha^{(k-J_L)_+}$.
	Choose any \(\rho\in(\alpha,1)\). Increasing the multiplicative
	constant if necessary, we have
	$\|\Phi_N(t,s)\|_{H_L\to H_L}
	\le
	C_L\rho^k$.
	Equivalence of the \(H_L\)-norm and the Euclidean norm gives
	\eqref{eq:uniform-product-bound}. 
\end{proof}

\subsection{Proof of Theorem~\ref{thm:affine-mean-turnpike}}
\label{app:thm2}

\begin{proof}
	The assumptions give the error recursions
	\eqref{eq:shifted-ham-bvp} and the product estimate in
	Lemma~\ref{lem:uniform-product}.
	Iterating \eqref{eq:r-backward} from \(t\) to \(N\), and using
	\(r_N=r^e\), gives
	$r_t
		=
		(A_c(P_{t+1}))^\top
		(A_c(P_{t+2}))^\top
		\cdots      
		(A_c(P_N))^\top r^e$.
	This product is the transpose of \(\Phi_N(N,t)\). Hence
	Lemma~\ref{lem:uniform-product} gives
	$\|r_t\|
		\le
		C_\Phi\rho^{N-t}\|r^e\|$, $0\le t\le N$.
        
Using \eqref{eq:e-driven1} and the definition of \(\Phi_N\),
	\begin{equation*}
			e_t
			=
			\Phi_N(t,0)e_0 -
			\sum_{\ell=0}^{t-1}
			\Phi_N(t,\ell+1)
			(I+WP_{\ell+1})^{-1}
			Wr_{\ell+1} .
	\end{equation*}
	Let
	$M_W:=\underset{P\in\mathscr S}{\sup}\|(I+WP)^{-1}W\|$,
	which is finite by Lemma~\ref{lem:closed-tail-margin}. Combining the above estimates and
	Lemma~\ref{lem:uniform-product}, we obtain
	\begin{align*}
		\|e_t\|
		\le
		C_\Phi\rho^t\|e_0\|
		\notag+
		C_\Phi^2M_W
		\sum_{\ell=0}^{t-1}
		\rho^{t-\ell-1}
		\rho^{N-\ell-1}
		\|r^e\|.
	\end{align*}
	The sum on the right-hand side satisfies
	\[
	\begin{aligned}
		\sum_{\ell=0}^{t-1}
		\rho^{t-\ell-1}
		\rho^{N-\ell-1}
		=
		\rho^{N-t}
		\sum_{j=0}^{t-1}\rho^{2j}\le
		\frac{1}{1-\rho^2}
		\rho^{N-t}.
	\end{aligned}
	\]
	Therefore, there is a constant \(C_e>0\), independent of \(N,t\), such
	that $\|e_t\|\le C_e\Big(\rho^t\|m_0-m^e\|+\rho^{N-t}\|r^e\|\Big)$.
	
	Since \(q_t=P_te_t+r_t\) and
	\(P_t\in\mathscr S\), compactness of \(\mathscr S\) gives
	\(
	\sup_{P\in\mathscr S}\|P\|<\infty .
	\)
	Combining this bound with the bounds on $\|r_t\|$ and $\|e_t\|$
	yields a constant \(C_q>0\) such that
	\begin{equation}
		\|q_t\|
		\le
		C_q
		\Big(
		\rho^t\|m_0-m^e\|
		+
		\rho^{N-t}\|r^e\|
		\Big).
		\label{eq:q-bound}
	\end{equation}
	Adding the bounds on $\|e_t\|$ and $\|q_t\|$ proves \eqref{eq:two-sided-mp} after increasing the
	constant.
	
	It remains to estimate the mean control input and the mean of the
worst-case disturbance distribution.
	From \eqref{eq:ue-we} and \eqref{eq:mean-u-w-affine},
	\(
	\bar{u}_t^*-u^e
	=
	-R^{-1}B^\top q_{t+1},
	\bar{w}_t^*-\bar w^e
	=
	\lambda^{-1}\Xi^\top q_{t+1} .
	\)
	Applying \eqref{eq:q-bound} at stage \(t+1\) gives
	$$
		\|q_{t+1}\|
		\le
		C
		\Big(
		\rho^t\|m_0-m^e\|
		+
		\rho^{N-t-1}\|r^e\|
		\Big).
    $$
	Multiplying by  \(R^{-1}B^\top\) and
	\(\lambda^{-1}\Xi^\top\) proves \eqref{eq:two-sided-uw}.
\end{proof}

\subsection{Proof of Theorem~\ref{thm:cov-turnpike}}
\label{app:thm3}

\begin{proof}
	First, the maps \(P\mapsto A_c(P)\) and \(P\mapsto D(P)\) are Lipschitz on
	\(\mathscr S\). Indeed, for \(P,\widetilde P\in\mathscr S\),
	\[
	\begin{aligned}
		&(I+WP)^{-1}-(I+W\widetilde P)^{-1}       \\
		=&-(I+WP)^{-1}
		W(P-\widetilde P)
		(I+W\widetilde P)^{-1},\\
		& (\lambda I-\Xi^\top P\Xi)^{-1}
		-(\lambda I-\Xi^\top \widetilde P\Xi)^{-1} \\
		=&
		(\lambda I-\Xi^\top P\Xi)^{-1}
		\Xi^\top(P-\widetilde P)\Xi
		(\lambda I-\Xi^\top \widetilde P\Xi)^{-1}.
	\end{aligned}
	\]
	By Lemma \ref{lem:closed-tail-margin}, the inverse matrices are bounded uniformly
	on \(\mathscr S\). Hence there is \(L>0\) such that, for all
	\(P,\widetilde P\in\mathscr S\),
	$$
			\|A_c(P)-A_c(\widetilde P)\|
			+\|D(P)-D(\widetilde P)\| \le
			L\|P-\widetilde P\|.
	$$
	Since \(P_{t+1}=S_{N-t-1}\), \eqref{eq:riccati-exp-tail} gives
	\begin{equation}
		\begin{aligned}
			\|A_c(P_{t+1})\!-\!A_c\|
			+\|D(P_{t+1})\!-\!D(P_{\rm ss})\| \le
			C\rho_P^{N-t-1}
		\end{aligned}
		\label{eq:FD-tail-estimate}
	\end{equation}
	for all $0\le t\le N-1$.
	Let \(\Delta_t:=X_t^*-X^e\). Subtracting
	\eqref{eq:Xe-Lyapunov} from \eqref{eq:cov-recursion} gives
	\begin{equation}
		\Delta_{t+1}
		=
		A_c(P_{t+1})
		\Delta_t
		A_c(P_{t+1})^\top
		+
		\mathcal E_t,
		\label{eq:Delta-cov-recursion}
	\end{equation}
	where
	\(
		\mathcal E_t
		:=
		A_c(P_{t+1})X^eA_c(P_{t+1})^\top
		-
		A_cX^eA_c^\top  +
		D(P_{t+1})-D(P_{\rm ss}) .
	\)
	Because \(A_c(P_{t+1})\) is uniformly bounded and \(X^e\) is fixed,
	\eqref{eq:FD-tail-estimate} implies $\|\mathcal E_t\|\le C\rho_P^{N-t-1}$.
	Iterating \eqref{eq:Delta-cov-recursion} gives
	\begin{equation*}
		\begin{aligned}
			\Delta_t
		\!=\!
			\Phi_N(t,0)
			\Delta_0
			\Phi_N(t,0)^\top\!\!\! +\!\!
			\sum_{\ell=0}^{t-1}\!
			\Phi_N(t,\ell\!+\!1)
			\mathcal E_\ell
			\Phi_N(t,\ell\!+\!1)^\top .
		\end{aligned}
	\end{equation*}
	Using the estimate of $\|\mathcal{E}_t\|$ and  Lemma~\ref{lem:uniform-product},
	\begin{equation}
		\begin{aligned}
			\|\Delta_t\|
			\le
			C\rho^{2t}\|\Delta_0\|+
			C
			\sum_{\ell=0}^{t-1}
			\rho^{2(t-\ell-1)}
			\rho_P^{N-\ell-1}.
		\end{aligned}
		\label{eq:Delta-cov-bound-before-sum}
	\end{equation}
	where
		$\sum_{\ell=0}^{t-1}
		\rho^{2(t-\ell-1)}
		\rho_P^{N-\ell-1}
		=\rho_P^{N-t}
		\sum_{j=0}^{t-1}
		(\rho^2\rho_P)^j \le C\rho_P^{N-t}.$
        
	Choose any \(\rho_X\in(\max\{\rho^2,\rho_P\},1)\). Then
	\(\rho^{2t}\le\rho_X^t\) and
	\(\rho_P^{N-t}\le\rho_X^{N-t}\). Hence
	$\|\Delta_t\|\le C_X\left(\rho_X^t\|\Delta_0\|+\rho_X^{N-t}\right)$.
	Since \(\Delta_0=X_0-X^e\), this proves
	\eqref{eq:cov-turnpike-bound}.
\end{proof}

\subsection{Proof of Corollary~\ref{cor:gaussian-surrogate-turnpike}}
\label{app:cro1}

\begin{proof}
Let \(Z\sim\mathcal N(0,I_n)\) be defined on an auxiliary
			probability space. Then
	\(
	m_t^*
	+
	\big(X_t^*\big)^{1/2}Z
	\sim
	\mathcal G_t,
	m^e+(X^e)^{1/2}Z
	\sim
	\mathcal G^e .
	\)
    These two random vectors form an admissible coupling of \(\mathcal G_t\) and \(\mathcal G^e\). By the definition of \(W_2\),
		\begin{align}
			W_2\big(\mathcal G_t,\mathcal G^e\big)
			&\le
			\Bigg(
			\|m_t^*-m^e\|^2 \!+\!
			\Big\|
			\big(X_t^*\big)^{1/2}
			\!\!-\!
			(X^e)^{1/2}
			\Big\|_F^2
			\Bigg)^{1/2}                                     \nonumber \\
			&\le
			\|m_t^*-m^e\| +
			\Big\|
			\big(X_t^*\big)^{1/2}
			-
			(X^e)^{1/2}
			\Big\|_F .\label{eq:gaussian-coupling-bound}
		\end{align}

	For positive semidefinite matrices, the
			principal square-root map satisfies the global \(1/2\)-H\"older
			estimate
	\begin{equation*}
		\|X^{1/2}-Y^{1/2}\|_F
		\le
		C_{\rm sq}\|X-Y\|^{1/2},
		\qquad
		X,Y\succeq0,
	\end{equation*}
	where \(C_{\rm sq}>0\) depends only on the dimension and on the matrix norm used in \eqref{eq:cov-turnpike-bound}. Applying this estimate to \eqref{eq:gaussian-coupling-bound}
	gives $W_2\big(\mathcal G_t,\mathcal G^e\big)
		\le
		\|m_t^*-m^e\| +
		C_{\rm sq}
		\|X_t^*-X^e\|^{1/2}$.
	
	Using \eqref{eq:two-sided-mp} and \eqref{eq:cov-turnpike-bound}, and then applying \(\sqrt{a+b}\le\sqrt a+\sqrt b\), we obtain
	\[
	\begin{aligned}
		W_2\big(\mathcal G_t,\mathcal G^e\big)
		\le&
		C_m\rho^t\|m_0-m^e\|+
		C_m\rho^{N-t}\|Q_fm^e-p^e\| \\
		&\!\!\!\!\!\!\!\!\!+
		C_{\rm sq}C_X^{1/2}
		\rho_X^{t/2}
		\|X_0\!-\!X^e\|^{1/2} \!+\!
		C_{\rm sq}C_X^{1/2}
		\rho_X^{(N-t)/2}.
	\end{aligned}
	\]
	Taking
	\(
	\rho_G
	:=
	\max\{\rho,\sqrt{\rho_X}\}
	\in(0,1),
	\)
	we have
	$\rho^t\le\rho_G^t$,
	$\rho_X^{t/2}\le\rho_G^t$,
	$\rho^{N-t}\le\rho_G^{N-t}$,
	$\rho_X^{(N-t)/2}\le\rho_G^{N-t}$.
	Since
			\(\|Q_fm^e-p^e\|\) is fixed, increasing \(C_G\) to absorb this
			quantity and the fixed constants \(C_m\), \(C_X\), and
			\(C_{\rm sq}\) proves \eqref{eq:gaussian-surrogate-W2-bound}.
\end{proof}

\subsection{Proof of Proposition \ref{prp:quasi-turnpike-cost}}\label{app:prop4}

\begin{proof}
	Set $s_{\rm ss}:=p^e-P_{\rm ss}m^e$. We first note that there exist
	$C_0>0$ and $\rho_0\in(0,1)$ such that
	\begin{align}\label{72}
	    \|K_t-K_{\rm ss}\|+\|k_t-k_{\rm ss}\|
	\le C_0\rho_0^{\,N-t-1}.
	\end{align}
	Indeed, Lemma \ref{lem:riccati-tail-exp} gives the corresponding estimate for
	$P_{t+1}-P_{\rm ss}$. To estimate $s_{t+1}$, apply Theorem \ref{thm:affine-mean-turnpike} to the
	initial mean $m_0=m^e$. Since
	$p_t^\ast=P_tm_t^\ast+s_t$, we obtain
	\[
	\begin{aligned}
		\|s_t\!-\!s_{\rm ss}\|
		&\le
		\|p_t^\ast\!-\!p^e\|
		\!+\!\|P_t\|\,\|m_t^\ast\!-\!m^e\|
		\!+\!\|P_t\!-\!P_{\rm ss}\|\,\|m^e\|  \\
		&\le C\rho_0^{\,N-t},
	\end{aligned}
	\]
	after increasing $C$ and $\rho_0$ if necessary. Equation \eqref{72} then
	follows from \eqref{eq:K-affine}-\eqref{eq:k-affine}, Lemma \ref{lem:closed-tail-margin}, and the definitions of
	$K_{\rm ss}$ and $k_{\rm ss}$.
	
	For the optimal continuation value $V_{t+1}$, define
	\begin{align*}
	    \mathcal{Q}_t(x,u):=&~
	x^\top Qx+u^\top Ru\\
    &\!\!\!\!\!\!\!\!\!\!\!\!\!\!\!\!\!\!\!\!\!\!+
	\frac1M\sum_{i=1}^M
	\sup_w
	\left\{
	V_{t+1}(Ax+Bu+\Xi w)
	-\lambda\|w-\widehat w_t^{(i)}\|^2
	\right\}.
	\end{align*}
	The calculation in Appendix \ref{app:one-step-affine-minimax} shows that $\mathcal{Q}_t(x,\cdot)$
	is quadratic with unique minimizer $u_t^\ast(x)=K_tx+k_t$ and Hessian
	$2\mathcal{R}_t$, where
	\[
	\mathcal{R}_t
	=
	R+B^\top \!P_{t+1}B
	+B^\top \!P_{t+1}\Xi
	(\lambda I-\Xi^\top \!P_{t+1}\Xi)^{-1}
	\Xi^\top \!P_{t+1}B.
	\]
	Hence
	\begin{align}\label{73}
	    \mathcal{Q}_t(x,u)-V_t(x)
	=
	(u-u_t^\ast(x))^\top
	\mathcal{R}_t
	(u-u_t^\ast(x)).
	\end{align}
	
	By Lemma \ref{lem:closed-tail-margin} and compactness of the Riccati tail,
	$\sup_{N,t}\|\mathcal{R}_t\|<\infty$. Thus, for $t<N-q$,
	\eqref{72}-\eqref{73} imply
	\begin{align}\label{74}
	    \mathcal{Q}_t(x,\pi_t^{(q)}(x))-V_t(x)
	\le
	C\rho_0^{\,2(N-t-1)}(1+\|x\|^2),
	\end{align}
	whereas the left-hand side is zero for $t\ge N-q$.
	
	It remains only to control the second moments appearing in \eqref{74}.
	Let $\widehat V_t^{(q)}$ denote the worst-case value associated with
	the fixed policy $\pi^{(q)}$. Since $\pi^{(q)}=\pi^\ast$ on the last
	$q$ stages,
	$\widehat V_{N-q}^{(q)}=V_{N-q}$.
	On $0\le t<N-q$, its quadratic and affine coefficients are generated
	by the stationary-policy Bellman map corresponding to
	$K_{\rm ss}x+k_{\rm ss}$. This map has the fixed point
	$(P_{\rm ss},s_{\rm ss})$, at which its linearization is block triangular with diagonal maps
	$\Delta\mapsto A_c^\top\Delta A_c$, $v\mapsto A_c^\top v$.
	Since $A_c$ is Schur, the same local contraction argument as in
	Lemma \ref{lem:riccati-tail-exp}, together with
	$P_{N-q}\to P_{\rm ss}$ and $s_{N-q}\to s_{\rm ss}$, yields, for all
	$q\ge q_0$,
	\[
	\sup_{0\le t\le N-q}
	\left(
	\|\widehat P_t^{(q)}-P_{\rm ss}\|
	+\|\widehat s_t^{(q)}-s_{\rm ss}\|
	\right)
	\le C\rho_0^q.
	\]
	Consequently the linear part of the corresponding worst-case closed loop is uniformly close to $A_c$. Using the Lyapunov matrix	employed in the proof of Lemma \ref{lem:uniform-product}, the uniform penalty bound of Lemma \ref{lem:closed-tail-margin}, and the fixed empirical covariance $\Sigma$, we therefore
	obtain
	\[
	\sup_{\substack{N\ge q\ge q_0\\0\le t\le N}}
	\mathbb{E}^{(q)}\|x_t\|^2
	\le
	C
	\left(1+\int_{\mathbb{R}^n}\|x\|^2\,d\mu_0(x)\right),
	\]
	where $\mathbb{E}^{(q)}$ denotes expectation under
	$\pi^{(q)}$ and its worst-case disturbance policy.
	
	Finally, apply the Bellman inequality defining $\mathcal{Q}_t$ along
	this worst-case closed loop and sum over $t$. The $V_t$ terms
	telescope, giving
	\[
	\begin{aligned}
		J_N^{\rm wc}(\pi^{(q)})-J_N^\ast
		&\le
		\sum_{t=0}^{N-q-1}
		\mathbb{E}^{(q)}
		\big[
		\mathcal{Q}_t(x_t,\pi_t^{(q)}(x_t))-V_t(x_t)
		\big] \\
		&\le
		C\left(1\!+\!\int_{\mathbb{R}^n}\!\!\|x\|^2\,\dd\mu_0(x)\!\right)\!
		\sum_{t=0}^{N-q-1}\!\!\rho_0^{\,2(N-t-1)} \\
		&\le
		\frac{C}{1-\rho_0^2}
		\left(1+\int_{\mathbb{R}^n}\|x\|^2\,\dd\mu_0(x)\right)
		\rho_0^{\,2q}.
	\end{aligned}
	\]
	The lower bound follows from the optimality of $\pi^\ast$.
	Taking $\rho_{\rm p}=\rho_0$ proves the claim.
\end{proof}

\section*{Acknowledgment}

The authors thank Enrique Zuazua for valuable discussions and helpful
suggestions.

\end{document}